\newcommand{\N}{\mathbb{N}}                     
\newcommand{\Z}{\mathbb{Z}}                     
\newcommand{\R}{\mathbb{R}}                     
\newcommand{\C}{\mathbb{C}}                     
\newcommand{\T}{\mathbb{T}}                     
\newcommand{\set}[2]{\left\{{#1}\mid{#2}\right\}}       
\newcommand{\im}{\mathrm{Im\,}}                 
\newcommand{\re}{\mathrm{Re\,}}                 
\newcommand{\A}{\mathbb{A}}
\newcommand\blfootnote[1]{%
  \begingroup
  \renewcommand\thefootnote{}\footnote{#1}%
  \addtocounter{footnote}{-1}%
  \endgroup
}
\newtheorem{mainthm}{\sc Theorem}           
\newtheorem{maincor}[mainthm] {\sc Corollary}          
\newtheorem{thm}{\sc Theorem}[section]               
\newtheorem*{thm*}{\sc Theorem}               
\newtheorem{cor}[thm]{\sc Corollary}        
\newtheorem*{cor*}{\sc Corollary}        
\newtheorem{lem}[thm]{\sc Lemma}            
\newtheorem{prop}[thm]{\sc Proposition}     
\newtheorem{rem}[thm]{\sc Remark}           
\newtheorem{ex}[thm]{\sc Example}           
\title{A non-squeezing theorem for convex symplectic images of the Hilbert ball}
\author{Alberto Abbondandolo and Pietro Majer\blfootnote{The first author is partially supported by the DFG grant AB 360/1-1.  The present work is part of the first author's activities within CAST, a Research Network Program of the European Science Foundation.}} 
\date{August 18, 2014}
\begin{document}

\maketitle

\begin{abstract}
We prove that the non-squeezing theorem of Gromov holds for symplectomorphisms on an infinite-dimensional symplectic Hilbert space, under the assumption that the image of the ball is convex. The proof is based on the construction by duality methods of a symplectic capacity for bounded convex neighbourhoods of the origin. We also discuss some examples of symplectomorphisms on infinite-dimensional spaces exhibiting behaviours which would be impossible in finite dimensions.

\tableofcontents

\end{abstract}

\newpage

\section*{Introduction}
\addcontentsline{toc}{section}{\numberline{}Introduction}

The aim of this paper is to extend the non-squeezing theorem of Gromov to symplectomorphisms of an infinite-dimensional Hilbert space, under the assumption that the image of the ball is convex. Before giving a precise statement, we review Gromov's finite-dimensional statement and its subsequent generalizations to Hamiltonian PDEs.

Let $\omega = \sum_{j=1}^n dp_j \wedge dq_j$ be the standard symplectic form on $\R^{2n}$. A symplectomorphism between open subsets of $\R^{2n}$ is a diffeomorphism which preserves $\omega$. The standard examples of symplectomorphisms are given by the flow of a (possibly time-dependent) Hamiltonian system
\[
\dot{q} = \partial_p H_t(q,p), \qquad \dot{p} = -\partial_q H_t(q,p). 
\]
The coordinate-free way of writing the above system is
\[
\dot{x} = X_{H_t} (x),
\]
where the Hamiltonian vector field $X_H$ is defined by the identity
\begin{equation}
\label{cfham}
\omega(X_H,\cdot) = - dH.
\end{equation} 

The non-squeezing theorem of Gromov \cite{gro85} states that if $0<s<r$, then no symplectomorphism can map a ball $B_r$ of radius $r$ into the cylinder of radius $s$
\[
Z_s := \set{(q_1,p_1,\dots,q_n,p_n)\in \R^{2n}}{q_1^2+p_1^2 < s^2}.
\]
A coordinate-free reformulation of this theorem is the following. Let $\mathbb{H}_0\subset \R^{2n}$ be a symplectic 2-plane, i.e.\ a 2-plane on which $\omega$ does not vanish, and let $P$ be the symplectic projector onto $\mathbb{H}_0$, i.e.\ the projector along its symplectic orthogonal complement. Then every symplectomorphism $\varphi: B_r \rightarrow \varphi(B_r) \subset \R^{2n}$ satisfies
\[
\mathrm{area}_{\omega} \bigl( P \varphi(B_r) \bigr) \geq \pi r^2,
\]
where the area on $\mathbb{H}_0$ is induced by the restriction of $\omega$. Indeed, the latter statement clearly implies Gromov's original formulation. On the other hand, if the $\omega$-area of $P\varphi(B_r)$ is smaller than $\pi r^2$, then this set can be mapped into a subset of a disc in $\mathbb{H}_0$ of radius $s<r$ by an area-preserving diffeomorphisms $\psi$, by a particular case of  Dacorogna and Moser's theorem \cite{dm90}, and the symplectomorphism $(\psi \times \mathrm{id}_{\mathbb{H}_0^{\perp_{\omega}}}) \circ \varphi$ would map $B_r$ into $Z_s$.

It is a long standing open question whether the non-squeezing theorem generalizes to infinite-dimensional symplectic Hilbert spaces. A symplectic form on a real Hilbert space $\mathbb{H}$ is a skew-symmetric continuous 2-form
\[
\omega: \mathbb{H} \times \mathbb{H} \rightarrow \R
\]
which is non-degenerate, in the sense that the associated linear mapping 
\[
\Omega: \mathbb{H} \rightarrow \mathbb{H}^*
\]
is an isomorphism. This notion goes back at least to the book of Chernoff and Marsden \cite{cm74}, where a skew-symmetric continuous 2-form which is non-degenerate in the above sense is also called a {\em strong} symplectic form. When $\Omega$ is just injective, the form $\omega$ is called a {\em weak} symplectic form. See \cite{kuk00} and \cite{bbz13} for an extensive discussion of the two notions. In this paper by symplectic form we always mean a strong symplectic form. 

Given a symplectic form $\omega$ on $\mathbb{H}$, there always exists an equivalent Hilbert product $(\cdot,\cdot)$ on $\mathbb{H}$ such that $\Omega$ is an isometry, or equivalently such that the bounded operator $J : \mathbb{H} \rightarrow \mathbb{H}$ which is defined by
\[
(Jx,y) = \omega(x,y) \qquad \forall x,y\in \mathbb{H},
\]
is a complex structure on $\mathbb{H}$, i.e.\ it satisfies $J^2=-I$. This Hilbert product and the induced norm $\|\cdot\|$ are said to be {\em compatible} with $\omega$. A {\em symplectomorphism} between open subsets of $\mathbb{H}$ is a diffeomorphism which preserves $\omega$.

The first investigations on the validity of the non-squeezing theorem on infinite dimensional Hilbert spaces are due to Kuksin \cite{kuk95,kuk95b} and are motivated by the implications that such a statement has for the global behavior of Hamiltonian PDEs. It has been known for a long time that many conservative evolutionary PDEs  can be thought as infinite dimensional Hamiltonian systems. To have a concrete example in mind, consider the periodic nonlinear Schr\"odinger equation
\begin{equation}
\label{nls}
-i \partial_t u + \Delta u = f(|u|)u, \qquad u=u(t,x) \in \C, \; t\in \R, \; x\in \T^n,
\end{equation}
where $\T:= \R/\Z$ and $f$ is a smooth real function. This equation can be considered as the Hamiltonian equation which is induced by the Hamiltonian function
\begin{equation}
\label{ham}
H(u) := \int_{\T^n} \left( \frac{1}{2} |\nabla u|^2 + F(|u|) \right)\, dx,
\end{equation}
where $F'(s)=sf(s)$, and by the symplectic form
\begin{equation}
\label{sympnls}
\omega(u,v) := - \im \int_{\T^n} u(x) \overline{v}(x)\, dx.
\end{equation}
This means that the equation  (\ref{nls}) can be written in the form
\[
\partial_t u = X_H(u),
\]
where the ``Hamiltonian vector field'' $X_H$ is formally defined by inserting (\ref{ham}) and (\ref{sympnls}) into the identity (\ref{cfham}). The skew-symmetric 2-form (\ref{sympnls}) is a symplectic form on $L^2(\T^n,\C)$, seen as a real Hilbert space, and the standard $L^2$-norm is compatible with $\omega$. The space $L^2(\T^n,\C)$ is sometimes called the {\em Darboux phase space} of the equation (\ref{nls}).

The same form is a weak symplectic form on the higher order Sobolev spaces $H^s(\T^n,\C)$, $s>0$. When the nonlinearity $f$ satisfies suitable growth and regularity assumptions, the Hamiltonian $H$ is a smooth functional on $H^1(\T^n,\C)$, which is therefore called the {\em energy phase space} of the equation (\ref{nls}). The fact that the energy phase space is strictly smaller than the Darboux phase space is a common feature of Hamiltonian PDEs: When the Hamiltonian is differentiable on a space $\mathbb{H}$ where the symplectic form is strong, then by the non-degeneracy of $\omega$ the identity (\ref{cfham}) defines a true vector field $X_H: \mathbb{H} \rightarrow \mathbb{H}$, and the Hamiltonian equation is an ODE on $\mathbb{H}$.  

In some cases, a Hamiltonian PDE defines a flow $\phi_t$ on its Darboux phase space $(\mathbb{H},\omega)$, and in this case each $\phi_t$ is a symplectomorphism (but the curve $\R \rightarrow \mathbb{H}$, $t\mapsto \phi_t(u_0)$, is not differentiable for a general $u_0\in \mathbb{H}$, unless we are dealing with an ODE on $\mathbb{H}$).  For instance, when $n=1$ and $F(s)=|s|^p$ with $|p|\leq 4$, the equation (\ref{nls}) defines a flow on the Darboux phase space $(L^2(\T,\C),\omega)$, where $\omega$ is defined by (\ref{sympnls}) (see \cite{bou93}).  In this case, it makes sense to ask whether the non-squeezing theorem hold: If $P$ is a symplectic projector onto a symplectic 2-plane, one wishes to know whether the projection by $P$ of the evolution of the ball of radius $r$ centered at $u_0$ is forced to have large area, or more precisely if
\[
\mathrm{area}_{\omega}\bigl( P \phi_t(B_r(u_0)) \bigr) \geq \pi r^2.
\]
Here it is important that the norm which defines $B_r(u_0)$ is compatible with $\omega$. In the case of the nonlinear Schr\"odinger equation, one may project on the complex line given by the $k$-th Fourier coefficient, $k\in \Z^n$,
\[
(P u)(x) := \hat{u}(k) e^{2\pi i k \cdot x}, \qquad \mbox{where} \qquad u(x) = \sum_{h\in \Z^n} \hat{u}(h) e^{2\pi i h\cdot x} , \qquad \hat{u}(h)\in \C,
\]
and the question becomes whether the inequality
\[
\mathrm{area} \bigl( \set{\hat{u}(k)}{u\in \phi_t(B_r(u_0))} \bigr) \geq \pi r^2
\]
holds,
where ``area'' stands for the standard area on $\C$ (here the question is non trivial only when $u_0\neq 0$, because the flow of the nonlinear Schr\"odinger equation (\ref{nls}) preserves the $L^2$ norm, so the equality holds for every $t\in \R$ when $u_0=0$). The above inequality says that during the evolution we cannot obtain a better determination of the value of a single Fourier coefficient than the one we have for $t=0$, even if we are willing to loose control on the value of all the other Fourier coefficients. As Kuksin observed in \cite{kuk95}, the validity of such a non-squeezing inequality forbids the existence of steady states which are attractors for an open set of initial conditions, and  forbids also a certain kind of {\em energy transfer}, that is, a certain way in which the energy can be spread from low to high Fourier modes. We refer to \cite{kuk95} for precise explanations. See also \cite{kuk95b}, \cite{bou95} and \cite{ckstt10} for results showing that other forms of energy transfer - in Sobolev spaces of higher regularity, where the symplectic form is weak - are instead to be expected in the case of nonlinear PDEs.

Having discussed the meaning of the non-squeezing phenomenon on symplectic Hilbert spaces, we now review the known results about its validity.
In \cite{kuk95} Kuksin has proved that the non-squeezing theorem holds for Hamiltonian PDEs whose flow is a smooth compact perturbation of a linear flow of the form $e^{JAt}$, where $A$ is an unbounded operator which is self-adjoint on the complex Hilbert space $(\mathbb{H},J,(\cdot,\cdot))$ and semi-simple. This class of PDEs includes, for instance, the nonlinear wave equation on $\T$ with a smooth nonlinearity having polynomial growth, the nonlinear wave equation on $\T^2$  with nonlinearity of degree at most four (see \cite{bou95}), the membrane equation on $\T^2$ with a smooth nonlinearity having polynomial growth, and the Schr\"odinger equation on $\T^n$ with a nonlinearity of convolution type. Shortly afterwards, Bourgain \cite{bou94} has proved the non-squeezing theorem for the cubic Schr\"odinger equation on $\T$, whose flow cannot be seen as a compact perturbation of a linear one. More recently, the non-squeezing theorem has been confirmed for the KdV equation by Colliander, Keel, Staffilani, Takaoka and Tao \cite{ckstt05b} and for the BBM equation by Roum\'egoux \cite{rou10}. In all these papers the conclusion is deduced from Gromov's theorem by finding suitable finite dimensional approximations of the infinite dimensional flow. In \cite{bou94},  \cite{ckstt05b} and \cite{rou10} these finite dimensional approximations are quite delicate and rely on special algebraic properties of the equation under consideration, which imply suitable cancellations that reduce the interactions between low and high frequencies. Conservation laws also play a fundamental role.

\medskip

In this paper, we deal with general symplectomorphisms on a symplectic Hilbert space. After recalling the basic notions of symplectic geometry on infinite-dimensional Hilbert spaces in Sections \ref{sec1} and \ref{symchar}, we show some of the new phenomena which can arise in the infinite-dimensional setting: In Section \ref{example} we construct a symplectomorphism which maps a bounded closed convex neighborhood of the origin into its interior part.  This example is constructed by starting from a convex coercive Hamiltonian which does not admit non-constant periodic orbits.

The main result of this paper, whose proof is contained in Sections 4 to 8, is an infinite dimensional non-squeezing theorem which is not based on finite dimensional approximations and whose proof does not reduce to Gromov's theorem:

\begin{mainthm}
\label{uno}
Let $\varphi:B_r \rightarrow \varphi(B_r)\subset \mathbb{H}$ be a smooth symplectomorphism such that $\varphi(B_r)$ is convex. Assume moreover that the differentials up to the third order of $\varphi$ and $\varphi^{-1}$ are bounded. Let $P$ be the symplectic projector onto a 2-dimensional linear subspace $\mathbb{H}_0\subset \mathbb{H}$. Then 
\[
\mathrm{area}_{\omega}(P\varphi(B_r)) \geq \pi r^2,
\]
where the area form on $\mathbb{H}_0$ is induced by the restriction of $\omega$.
\end{mainthm}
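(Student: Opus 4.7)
The plan is to follow the duality strategy announced in the abstract. For a bounded convex open neighbourhood $C$ of the origin in $\mathbb{H}$ I would construct a symplectic invariant $c(C)$, a ``symplectic capacity'' in the spirit of Ekeland--Hofer, and then chain
\[
\pi r^2 = c(B_r) = c(\varphi(B_r)) \leq \mathrm{area}_\omega(P\varphi(B_r))
\]
using three properties: normalisation on the ball, symplectic invariance, and the upper bound by the projected area. A preliminary translation by $-\varphi(0)$ lets me assume $\varphi(0)=0$, which places the origin in the interior of the convex image $\varphi(B_r)$ and makes its Minkowski gauge available.

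The capacity would be defined through a Clarke-type dual variational principle. Let $j_C$ be the Minkowski gauge of $C$ and set $H_C:=\tfrac12 j_C^2$, a convex $2$-homogeneous Hamiltonian with $\{H_C=\tfrac12\}=\partial C$. Consider the dual action functional
\[
\Phi_C(x) := \int_0^1 H_C^*(-J\dot x(t))\,dt - \tfrac12\int_0^1 \omega(x(t),\dot x(t))\,dt
\]
on a Hilbert space of mean-zero loops, where $H_C^*$ is the Fenchel conjugate of $H_C$ and $J$ is the compatible complex structure. I define $c(C)$ via the infimum of $\Phi_C$ on a suitable constraint set; in finite dimensions the positive critical values of $\Phi_C$ are the actions of closed characteristics on $\partial C$. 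Showing that this infimum is positive, finite, and attained is the analytic heart of the argument: boundedness of $C$ gives the quadratic coercivity $H_C^*(y)\geq c_0\|y\|^2$, strict convexity gives weak lower semicontinuity, and the sign-indefinite symplectic term is handled through the Fourier decomposition of loops along the spectrum of $-J\partial_t$. The computation $c(B_r)=\pi r^2$ is then a direct check exploiting rotational symmetry.

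Symplectic invariance of $c$ rests on the geometric observation that a symplectomorphism sends closed characteristics of $\partial C$ to closed characteristics of $\partial\varphi(C)$ with the same action (the $\omega$-area of a spanning disc); the assumption that $\varphi$ and $\varphi^{-1}$ have bounded differentials up to order three is used precisely to transfer this correspondence to the analytic dual setting and to ensure that the variational problems on $C$ and $\varphi(C)$ have the same infimum. The upper bound $c(C)\leq\mathrm{area}_\omega(PC)$ is then obtained by a test-loop argument: a parametrisation of the boundary of the convex planar region $PC\subset\mathbb{H}_0$, lifted to a loop in $\mathbb{H}$ in a way that keeps the $H_C^*$ integrand controlled by the boundedness of $C$, is used as a trial element on which $\Phi_C$ evaluates to essentially the enclosed area. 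I expect the main obstacle to be the existence of a minimiser in infinite dimensions: the compact Sobolev embeddings that underlie the classical Ekeland--Hofer argument in $\mathbb{R}^{2n}$ fail, and a substitute has to be extracted from the interplay between the strict convexity of $H_C$ and the spectral structure of $-J\partial_t$ on mean-zero loops.
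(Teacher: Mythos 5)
Your overall architecture (a dual Clarke-type capacity, normalised on the ball, symplectically invariant, bounded above by the projected area in a $2$-dimensional symplectic subspace) is exactly the paper's, and the variational functional you write down is essentially the one used there. But the two steps you identify as the heart of the argument are precisely the ones that fail in infinite dimensions, and the failure is not a technical compactness issue that a clever substitute can repair. The paper exhibits (Section \ref{example}) a bounded convex ellipsoid $\{H\le c\}$ in $L^2((a,b),\C)$, namely the sublevel of $H(u)=\tfrac12\int x|u|^2\,dx$, whose boundary carries \emph{no} closed characteristic at all: the linear Hamiltonian flow $u\mapsto e^{itx}u$ has no nonconstant periodic orbit. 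Since (Theorem \ref{minicara}) any minimiser of the dual problem would produce a closed characteristic of minimal action on $\partial C$, the infimum defining the capacity is genuinely \emph{not attained} for such $C$; your plan to extract attainment "from the interplay between strict convexity and the spectral structure of $-J\partial_t$" cannot succeed. Likewise, your invariance argument "symplectomorphisms send closed characteristics to closed characteristics with the same action" has nothing to act on when there are no closed characteristics on either boundary.

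The paper's replacement for both steps is the following. The capacity is defined as an infimum that need not be attained; since the constrained functional is $C^1$ and bounded below on the complete manifold $\mathbb{M}=\{\A^*(\xi)=1\}$, the deformation lemma yields minimising \emph{Palais--Smale} sequences, and Lemmas \ref{lll1}--\ref{lll2} show these are, after correcting by constants, "almost closed characteristics": loops $y_n$ with $-\Omega\dot y_n=2c_{\mathbb H}(C)\,dH_C(y_n)+\eta_n$, $\eta_n\to0$ in $L^2$, along which $H_C$ is almost constant. These approximate orbits, not true characteristics, are what get transported. A second, independent obstacle your proposal glosses over is that for a nonlinear $\varphi$ (even a translation) the polar of $\varphi(C)$ bears no usable relation to the polar of $C$, so the two dual problems cannot be compared directly; the paper first proves (Theorem \ref{possohom}, via the contact form $\lambda|_{\partial C}$ and Moser's argument) that $\varphi(C)$ is the image of $C$ under a positively $1$-homogeneous symplectomorphism, for which the pushforward of the almost-characteristics $y_n$ produces admissible competitors for $\varphi(C)$ with the same asymptotic value, and then removes a residual strong-convexity hypothesis by Hausdorff-continuity of the capacity. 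Finally, note that the paper obtains the area bound not by your test-loop lift but by the elementary projection inequality $c_{\mathbb H_0}(PC)\ge c_{\mathbb H}(C)$ (polars of projections are slices of polars) combined with the identity $c_{\mathbb H_0}=\mathrm{area}_\omega$ in dimension two; this part of your plan could probably be made to work, but the minimiser-existence and characteristic-correspondence steps cannot.
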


The crucial assumption here is that $\varphi(B_r)$ should be convex. Of course this assumption prevents the application of this result to the long time evolution of a ball by a nonlinear flow. However, it provides an obstruction to what general infinite dimensional Hamiltonian flows can do to balls of a fixed size on short time scales, or equivalently to sufficiently small balls on a large but fixed time scale. By the convexity of the ball, the boundedness assumption on the differentials of $\varphi$ and $\varphi^{-1}$ is equivalent to the boundedness of the maps $d^3 \varphi$ and $(d\varphi^{-1})$ on $B_r$. This assumption is not very restrictive: The differentials of any order of the flow of a typical Hamiltonian PDE which is well-posed in its Darboux phase space are bounded on bounded sets. Here is a corollary of Theorem \ref{uno}:

\begin{maincor}
Consider the one-dimensional periodic nonlinear Schr\"odinger equation
\begin{equation}
\label{nls4}
- i \partial_t u + \partial_{xx} u = \partial_{\bar{u}} F(t,x,u,\bar{u}), \qquad x\in \T,
\end{equation}
where $F$ is a polynomial in the last two variables of degree at most 4 with coefficients depending smoothly on $(t,x)\in \R\times \T$ and is real  (i.e.\ $\bar{F}(t,x,u,v)=F(t,x,\bar{v},\bar{u})$). Assume that the solutions of the initial value problem for (\ref{nls4}) are well defined for every  $u(0,\cdot)$ in a $L^2$-ball of radius $r_0$ centered in $u_0\in L^2(\T,\C)$ and for every $0\leq t \leq T$. Then there exists $r_1=r_1(u_0,T)\leq r_0$ such that for every $r\in (0,r_1]$ and every $t\in [0,T]$ the flow map $\phi_t$ of (\ref{nls4}) satisfies the non-squeezing property
\[
\mathrm{area} \bigl( \set{\hat{u}(k)}{u\in \phi_t(B_r(u_0))} \bigr) \geq \pi r^2,
\]
for every $k\in \Z$.
\end{maincor}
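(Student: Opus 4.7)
The plan is to deduce the corollary from Theorem \ref{uno} by a rescaling and translation argument that turns the image $\phi_t(B_\rho(u_0))$ into a small $C^2$-perturbation of a linear ellipsoid as $\rho \to 0$, so the convexity hypothesis becomes automatic. Theorem \ref{uno} then produces a lower bound on the area of the projected rescaled image, which rescales back to the desired estimate.

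Concretely, for $t \in [0, T]$ and $\rho \in (0, r_0]$, I would set $L_t := d\phi_t(u_0)$ and
\[
\varphi_{t,\rho}(v) := \frac{1}{\rho}\bigl(\phi_t(u_0 + \rho v) - \phi_t(u_0)\bigr), \qquad v \in B_1.
\]
Since $\omega$ is translation-invariant and $d\varphi_{t,\rho}(v) = d\phi_t(u_0 + \rho v)$, the map $\varphi_{t,\rho}$ is a smooth symplectomorphism from $B_1$ onto $\rho^{-1}(\phi_t(B_\rho(u_0)) - \phi_t(u_0))$. The scaling identity $d^k\varphi_{t,\rho}(v) = \rho^{k-1}d^k\phi_t(u_0 + \rho v)$, together with the analogous identities for $\varphi_{t,\rho}^{-1}$, shows that bounds on the first three differentials of $\phi_t$ and $\phi_t^{-1}$ on $B_{r_0}(u_0)$, uniform in $t \in [0,T]$, transfer to bounds on those of $\varphi_{t,\rho}$ and $\varphi_{t,\rho}^{-1}$ on $B_1$, uniformly in $(t,\rho) \in [0,T]\times(0,r_0]$.

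For the convexity hypothesis of Theorem \ref{uno}, Taylor expansion
\[
\varphi_{t,\rho}(v) = L_t v + \frac{\rho}{2}\, d^2\phi_t(u_0)(v,v) + O(\rho^2)
\]
shows that $\varphi_{t,\rho} \to L_t$ in $C^2$ on $B_1$ as $\rho \to 0$, and in fact $d^2\varphi_{t,\rho} = O(\rho)$. As a bounded linear symplectomorphism of $\mathbb{H}$, $L_t$ is an isomorphism, so $L_t(B_1)$ is a smoothly strongly convex ellipsoid whose boundary has second fundamental form bounded below by a positive constant. Smooth strong convexity of the image of the unit ball under a smooth embedding is a $C^2$-open condition, so $\varphi_{t,\rho}(B_1)$ is convex for all $\rho$ below some $\rho_1(t) > 0$. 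Continuity in $t$ and compactness of $[0,T]$ then yield a uniform $r_1 = r_1(u_0, T) \in (0, r_0]$ that works for all $t \in [0,T]$.

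Finally, the two-dimensional subspace $\mathbb{H}_0 := \{z e^{2\pi i k \cdot x} : z \in \C\}$ is symplectic with respect to (\ref{sympnls}): the computation $\omega(e^{2\pi i k \cdot x}, i\, e^{2\pi i k \cdot x}) = 1$ shows that the $\omega$-area on $\mathbb{H}_0$ coincides with the Lebesgue area on $\C$, and the symplectic orthogonal of $\mathbb{H}_0$ agrees with its $L^2$-orthogonal $\{w : \hat w(k) = 0\}$, so the symplectic projector $P$ onto $\mathbb{H}_0$ is exactly the $k$-th Fourier projector $w \mapsto \hat w(k)\, e^{2\pi i k \cdot x}$. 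Applying Theorem \ref{uno} to $\varphi_{t,\rho}$ with this $P$ gives $\mathrm{area}_{\omega}(P\varphi_{t,\rho}(B_1)) \geq \pi$, and since $P\varphi_{t,\rho}(B_1)$ is an affine image of $\rho^{-1}P\phi_t(B_\rho(u_0))$, the area scales by $\rho^{-2}$ and the claim follows. The main obstacle is actually the preliminary regularity input implicit in the hypothesis ``well defined'': one must know that the NLS flow $\phi_t$ is genuinely $C^3$ on $B_{r_0}(u_0) \subset L^2(\T,\C)$ with bounded first three derivatives uniformly on $[0,T]$, which is exactly where the restriction $\deg F \leq 4$ is used, through Bourgain's $X^{s,b}$ contraction scheme in the $L^2$-subcritical regime; once this regularity is granted, the rescaling argument above is essentially routine.
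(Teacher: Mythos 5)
Your proposal is correct and follows essentially the same route as the paper: the paper likewise deduces the corollary from Theorem \ref{uno} by invoking Bourgain's $L^2$ well-posedness for the regularity of $\phi_t$ and by observing that for small $r$ the set $\phi_t(B_r(u_0))$ is $C^2$-close to the (strongly convex) image of the ball under $d\phi_t(u_0)$, hence convex. Your rescaled maps $\varphi_{t,\rho}$ and the identification of the symplectic projector with the $k$-th Fourier projector simply make explicit what the paper leaves as a one-line remark.
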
 

The non-trivial point in the above corollary is the fact that the non-squeezing inequality holds with the optimal constant $\pi$: the analogous inequality with any smaller constant would hold for sufficiently small $r$ just by the continuity of the flow.
The above corollary follows immediately from Theorem \ref{uno} and from the local well-posedness of (\ref{nls4}) in the Darboux phase space $L^2(\T,\C)$, which under the above assumptions on $F$ has been proved by Bourgain in \cite{bou93} (see also \cite[Section 3]{bou95} for the generality considered here). Indeed, if $r_1$ is small enough, the image of any ball of radius $r\leq r_1$ around $u_0$ remains convex up to time $T$, being $C^2$-close to the evolution of the same ball with respect to the differential of $\phi_t$ at $u_0$. The assumption on the existence up to time $T$ is necessary because solutions of this general nonlinear Schr\"odinger equation might blow up. When the polynomial $F$ is a function of $|u|$, then the flow preserves the $L^2$-norm, and this assumption is automatically fulfilled. The flow of the equation (\ref{nls4}) cannot be seen as a compact perturbation of a linear flow, so Kuksin's result cannot be applied here. Bourgain \cite{bou95} has shown that this flow admits finite dimensional approximations which, although not uniform enough to prove the non-squeezing property, permit to deduce weaker statements, such as the fact that the diameter of the evolution of a ball cannot shrink to zero. To the best of our knowledge, it is not known whether (\ref{nls4}) satisfies the non-squeezing property for balls of arbitrary size, except for the case 
\[
F(t,x,u,\bar{u}) = a(t,x) |u|^2 + b(t,x) |u|^4,
\]
where $a$ and $b$ are smooth functions, which is considered by Bourgain in the already mentioned \cite{bou94}. Bourgain's proof builds on the fact that the preservation of the $L^2$-norm and suitable cancellations imply better approximation properties of the finite dimensional reductions. 

\medskip

In order to prove Theorem \ref{uno}, we construct a {\em symplectic capacity} for bounded closed convex neighborhoods of the origin in $\mathbb{H}$. The notion of symplectic capacity for subsets of $\R^{2n}$ was introduced by Ekeland and Hofer in \cite{eh89} and further developed by many authors (see the book of Hofer and Zehnder  \cite{hz94} for a comprehensive introduction).  A symplectic capacity on the class of compact convex subsets $C$ of $\R^{2n}$ with smooth boundary can be simply defined as the minimal action
\[
\mathbb{A}(x) := \int_{\T} x^* \lambda
\]
over all the closed characteristics on the boundary of $C$. 
Here $\lambda$ denotes a primitive of $\omega$ and a smooth curve $x: \T \rightarrow \partial C$ is said to be a closed characteristic if for every $t\in \T$ the vector $\dot{x}(t)$ is a positive multiple of $J n_C(x(t))$, where $n_C(x)$ denotes the outer normal to $C$ at $x$. Equivalently, $x$ is the time-reparametrization of a periodic Hamiltonian orbit of a smooth Hamiltonian having $\partial C$ as regular energy level and which increases in the outer normal direction. The existence of a closed characteristic on $\partial C$ was first proved by Weinstein in \cite{wei78}. The action of every closed characteristic is positive, and a closed characteristic with minimal action always exists. The fact that the minimal action of a closed characteristic on $\partial C$ coincides with the Ekeland-Hofer capacity of $C$, as defined in \cite{eh89}, is observed explicitly in \cite[Proposition 3.10]{vit89}. 

Let $C$ be a bounded closed convex neighborhood of the origin with smooth boundary in the symplectic Hilbert space $(\mathbb{H},\omega)$. When $\mathbb{H}$ is infinite-dimensional, $\partial C$ may have no closed characteristics at all, as an example in Section \ref{example} shows. However, we can define the symplectic capacity of $C$ as the positive number
\begin{equation}
\label{lacap}
c_{\mathbb{H}} (C) := \Bigl( 4 \sup \set{\mathbb{A}^*(\xi)}{\xi: \mathbb{T} \rightarrow \mathbb{H}^* \mbox{ absolutely continuous, } \dot{\xi} \in C^0 \mbox{ a.e.}} \Bigr)^{-1},
\end{equation}
where $\mathbb{A}^*$ denotes the action of a closed curve in the dual of $\mathbb{H}$, and $C^0\subset \mathbb{H}^*$ denotes the polar set of $C$. Notice that $c_{\mathbb{H}} (C)$ has the dimension of an area: Indeed, the symplectic action of a closed curve in $\mathbb{H}^*$ has the dimensions of the inverse of an area.

When $\mathbb{H}$ is finite-dimensional, the right hand-side of (\ref{lacap}) is a variational characterization of the Ekeland-Hofer capacity of $C$. This variational characterization does not seem to be explicitly present in the literature, but it is in the spirit of Clarke's and Ekeland's use of Fenchel duality to detect closed characteristics on $\partial C$ (see \cite{cla79}, \cite{ce80}, \cite{cla81} and \cite{eke90}). It is an easy matter to show that (\ref{lacap}) is equivalent to the more familiar formula 
\begin{equation}
\label{lacap2}
c_{\mathbb{H}} (C) = \inf \set{ \frac{1}{4} \int_{\T} \mu_{C^0}^2(\dot{\xi})\, dt}{\xi: \T \rightarrow \mathbb{H}^* \mbox{ abslutely continuous, } \mathbb{A}^*(\xi)=1},
\end{equation}
where $\mu_{C^0}: \mathbb{H}^* \rightarrow \R$ is the the Minkowski gauge of $C^0$ (see Section \ref{eqsec} below). When $\mathbb{H}$ is finite-dimensional the supremum in (\ref{lacap}) (resp.\ the infimum in (\ref{lacap2})) is achieved by a curve $\xi$ such that $-\Omega^{-1} \xi$ is homothetic to a closed characteristic with minimal action on $\partial C$ (see Theorem \ref{minicara} below). In our infinite-dimensional setting, this supremum (resp.\ infimum) is in general not achieved, but defines nevertheless a symplectic capacity on the set of closed bounded convex neighborhoods of the origin in $\mathbb{H}$, i.e.\ a function which satisfies the following properties:
\begin{enumerate}[(i)]
\item {\em (Monotonicity)} If $C_1\subset C_2$ then $c_{\mathbb{H}}(C_1) \leq c_{\mathbb{H}}(C_2)$.
\item {\em (Homogeneity)} $c_{\mathbb{H}} (rC) = r^2 c_{\mathbb{H}}(C)$ for every $r>0$. 
\item {\em (Normalization)} If $B$ is the closed unit ball of $\mathbb{H}$, then $c_{\mathbb{H}} (B) = \pi$.
\item {\em (Projection)} Let $P$ be the symplectic projector onto a symplectic closed linear subspace $\mathbb{H}_0 \subset \mathbb{H}$. Then $c_{\mathbb{H}_0}(PC) \geq c_{\mathbb{H}}(C)$.
\item {\em (Continuity)} The function $c_{\mathbb{H}}$ is continuous with respect to the Hausdorff metric.
\item {\em (Invariance)} Assume that $C$ has a regular boundary and is strongly convex.
Let $\varphi:C \rightarrow \varphi(C)\subset \mathbb{H}$ be a smooth symplectomorphism onto a convex neighborhood of the origin such that the differentials up to the third order of $\varphi$ and $\varphi^{-1}$ are bounded. Then $c_{\mathbb{H}}(\varphi(C)) =  c_{\mathbb{H}}(C)$.
\end{enumerate}

The non-squeezing Theorem \ref{uno} is an immediate consequence of the above properties and of the fact that, when $\dim \mathbb{H}_0=2$, the capacity $c_{\mathbb{H}_0}$ is just the area. 

Properties (i) to (v) follow quite easily from the definition (\ref{lacap}). The nontrivial part of our proof is to show that the invariance property (vi) holds. In the finite-dimensional case (vi) follows from the fact that symplectomorphisms preserve the closed characteristics and their action. When $\mathbb{H}$ is infinite-dimensional, $c_{\mathbb{H}}(C)$ cannot be interpreted as the action of a closed characteristic and the invariance property (vi) is nontrivial also for simple symplectomorphisms such as translations. The reason is that the polar of $\varphi(C)$ has little to do with the polar of $C$, unless $\varphi$ is linear. 

In Section \ref{secinv} we prove the invariance of $c_{\mathbb{H}}$ with respect to symplectomorphisms $\varphi$ which are positively 1-homogeneous maps, under the assumption that both $C$ and $\varphi(C)$ are regular and strongly convex. Here the main point is to show that minimizing sequences of (\ref{lacap2}) which are also Palais-Smale sequences are in a certain sense homothetic to ``almost closed characteristics of $\partial C$'', although in general they fail to converge to a curve which is homothetic to a true closed characteristic. 

In Section \ref{gensymp} we prepare the ground for the general case, by proving the following result, which might be of independent interest: If two smooth bounded convex neighborhoods of the origin $C_1$ and $C_2$ are symplectomorphic,  then $C_2$ is the image of $C_1$ by a positively 1-homogeneous symplectomorphism. The proof of the latter fact is based on a characterization of positively 1-homogeneous symplectomorphisms and on Moser's argument from \cite{mos65}. Property (vi) is then proved in Section \ref{geninv}, where a perturbation argument and property (v) allow us to remove the strong convexity assumption on $\varphi(C)$.

The existence of the symplectic capacity $c_{\mathbb{H}}$ allows us to prove also the following middle-dimensional non-squeezing result.

\begin{mainthm}
\label{due}
There exists a constant $\gamma>0$ with the following property.
Let $\varphi:B_r \rightarrow \varphi(B_r)\subset \mathbb{H}$ be a smooth symplectomorphism which satisfies the assumptions of Theorem \ref{uno}. Let $P$ be the symplectic projector onto a 2k-dimensional symplectic linear subspace $\mathbb{H}_0\subset \mathbb{H}$. Then 
\[
\mathrm{vol}_{\omega^k}(P\varphi(B_r)) \geq \gamma^{-1} \, \pi^k r^{2k},
\]
where the volume form on $\mathbb{H}_0$ is induced by the restriction of $\omega^k= \omega \wedge \dots \wedge \omega$.
\end{mainthm}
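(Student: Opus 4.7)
The plan is to chain the six properties of the symplectic capacity $c_{\mathbb{H}}$ with a classical finite-dimensional inequality relating capacity and volume on convex bodies; essentially all the heavy lifting needed has been prepared in the earlier sections of the paper.

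I would first reduce to the case $\varphi(0)=0$ by composing $\varphi$ with the translation $x\mapsto x-\varphi(0)$, which is a symplectomorphism and preserves every hypothesis of the theorem. Then $\overline{\varphi(B_r)}$ is a bounded closed convex neighborhood of the origin in $\mathbb{H}$. Since $\overline{B_r}$ has smooth boundary and is strongly convex, the invariance property (vi) applies to $\varphi$, and together with the homogeneity (ii) and the normalization (iii) yields
\[
c_{\mathbb{H}}\bigl(\overline{\varphi(B_r)}\bigr) \,=\, c_{\mathbb{H}}\bigl(\overline{B_r}\bigr) \,=\, r^2\, c_{\mathbb{H}}\bigl(\overline{B_1}\bigr) \,=\, \pi r^2.
\]
Next I would invoke the projection property (iv): since $\mathbb{H}_0$ is a $2k$-dimensional closed symplectic subspace,
\[
c_{\mathbb{H}_0}\bigl(\overline{P\varphi(B_r)}\bigr) \,\geq\, c_{\mathbb{H}}\bigl(\overline{\varphi(B_r)}\bigr) \,=\, \pi r^2,
\]
so the bounded closed convex neighborhood $K:=\overline{P\varphi(B_r)}$ of the origin in the finite-dimensional symplectic space $\mathbb{H}_0\cong (\R^{2k},\omega_0)$ has capacity at least $\pi r^2$.

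The remaining ingredient is a classical finite-dimensional inequality: there exists a constant $\gamma=\gamma(k)>0$ such that every bounded closed convex neighborhood $K$ of the origin in $(\R^{2k},\omega_0)$ satisfies $c_{\mathrm{EH}}(K)^k \leq \gamma\cdot\mathrm{vol}_{\omega^k}(K)$, where $c_{\mathrm{EH}}$ denotes the Ekeland-Hofer capacity (which, in finite dimensions, coincides on convex bodies with the capacity $c_{\mathbb{H}_0}$ defined by (\ref{lacap})). A direct proof goes via the L\"owner (minimum-volume containing) ellipsoid $E$ of $K$: by John's theorem $\mathrm{vol}(E)\leq (2k)^{2k}\mathrm{vol}(K)$, while bringing $E$ to its symplectic normal form $\{\sum_{j=1}^k (x_j^2+y_j^2)/a_j^2\leq 1\}$ with $a_1\leq\dots\leq a_k$ gives $c_{\mathrm{EH}}(E)=\pi a_1^2$ and $\mathrm{vol}(E)=(\pi^k/k!)\prod_j a_j^2\geq (\pi^k/k!)a_1^{2k}$, hence $c_{\mathrm{EH}}(E)^k\leq k!\,\mathrm{vol}(E)$. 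Monotonicity $c(K)\leq c(E)$ then yields the inequality with $\gamma=k!(2k)^{2k}$, a value which is far from sharp. Applied to our $K$ this gives
\[
\mathrm{vol}_{\omega^k}\bigl(P\varphi(B_r)\bigr) \,\geq\, \gamma^{-1}\, c_{\mathbb{H}_0}(K)^k \,\geq\, \gamma^{-1}\pi^k r^{2k},
\]
which is the announced estimate.

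The real content of the theorem is packaged into properties (iv) and (vi) of $c_{\mathbb{H}}$, whose construction and verification occupy the bulk of the paper, so I would not expect any serious new obstacle at this stage. The finite-dimensional capacity-vs-volume inequality used in the last step is a standard piece of symplectic convex geometry, and since one is free to lose a constant depending on $k$ no sharp result (such as the Viterbo conjecture $c(K)^k\leq k!\,\mathrm{vol}(K)$) is needed.
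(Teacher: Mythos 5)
Your reduction is exactly the paper's: translate so that $0\in\varphi(B_r)$, apply the invariance property (vi) to get $c_{\mathbb{H}}(\varphi(B_r))=c_{\mathbb{H}}(B_r)=\pi r^2$ via (ii) and (iii), push down to $\mathbb{H}_0$ with the projection property (iv), and finish with a capacity--volume inequality in $\R^{2k}$. The one place where you diverge is the last step, and there you have a genuine gap relative to the statement being proved. In Theorem \ref{due} the constant $\gamma$ is quantified \emph{before} $k$ is introduced, so the claim is that a single absolute constant works for all $k$ simultaneously (the introduction stresses this: ``$\gamma$ is an absolute constant which does not depend on $k$'', and notes that Viterbo's original bound, with a $k$-dependent constant, is precisely the weaker statement). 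Your John/L\"owner-ellipsoid argument yields $c_{\mathrm{EH}}(K)^k\leq k!\,(2k)^{2k}\,\mathrm{vol}_{\omega^k}(K)$, i.e.\ $\gamma=\gamma(k)$, and you explicitly (and incorrectly) assert that ``one is free to lose a constant depending on $k$''. That proves only the $k$-by-$k$ version of the theorem, not the theorem as stated. The paper instead invokes the result of Artstein-Avidan, Milman and Ostrover \cite{amo08}, which is exactly the dimension-free strengthening $c(K)^k\leq\gamma\,\mathrm{vol}_{\omega^k}(K)$ with $\gamma$ independent of $k$; this is a nontrivial theorem (it rests on Milman's $M$-ellipsoid, not on John's theorem) and cannot be replaced by the elementary volume-ratio bound you give, since $\bigl(k!\,(2k)^{2k}\bigr)^{1/k}\to\infty$.

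Everything else in your write-up is fine: the translation reduction, the applicability of (vi) to the ball (which lies in $\widehat{\mathscr{C}}$ and has smooth boundary), the identification of $c_{\mathbb{H}_0}$ with the Ekeland--Hofer capacity on convex bodies in finite dimensions (Theorem \ref{minicara}), and the computation $c_{\mathrm{EH}}(E)=\pi a_1^2$, $\mathrm{vol}(E)=(\pi^k/k!)\prod_j a_j^2$ for an ellipsoid in symplectic normal form. To repair the proof, replace the John-ellipsoid step by a citation of \cite{amo08}, as the paper does.
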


Indeed, once one has a symplectic capacity with the above properties, the above theorem follows from the fact that the Ekeland-Hofer capacity $c_{\R^{2k}}(C)$ of a convex set $C$ in $\R^{2k}$ can be bounded by its volume through the inequality
\begin{equation}
\label{vit}
c_{\R^{2k}}(C)^k \leq \gamma \, \mathrm{vol}_{\omega^k}(C),
\end{equation}
where $\gamma$ is an absolute constant which does not depend on $k$. The proof of latter inequality is due to Artstein-Avidan, Milman and Ostrover \cite{amo08} and holds for every symplectic capacity on convex subsets of $\R^{2n}$. Conjecturally, the value of the constant $\gamma$ in (\ref{vit}) - and hence in Theorem \ref{due} - is 1. This conjecture is due to Viterbo \cite{vit00}, who first proved the bound (\ref{vit}) with a constant $\gamma$ depending on the dimension $k$.

Unlike Theorem \ref{uno}, which in principle could hold also when $\varphi(B_r)$ is not convex, Theorem \ref{due} is not true, even in finite dimension, if we remove the convexity assumption and we project onto a proper symplectic subspace of dimension al least 4: in \cite{am13} the first author and Matveyev have constructed symplectomorphisms of $\R^6$ such that the projection of the image of the unit ball onto a symplectic 4-dimensional space has arbitrary small volume. By taking products with the identity mapping on suitable subspaces one gets counterexamples for any $1<k<\dim \mathbb{H}\leq +\infty$.

When the symplectomorphisms $\varphi$ is linear, the estimate of Theorem \ref{due} holds with the optimal constant $\gamma=1$ (see again \cite{am13} for the finite-dimensional case). We discuss this linear result in the appendix which concludes this paper, where we also prove a non-linear consequence, which roughly speaking says that one-parameter families of symplectomorphisms on a Hilbert space cannot have an invariant compact set which is a ``uniform attractor''. 

\medskip

We conclude this introduction with a general comment on the argument of the proof of our main result. All known proofs of the finite-dimensional non-squeezing theorem are based on some existence principle, either of $J$-holomorphic curves (as in Gromov's original proof \cite{gro85}) or of periodic orbits (as in Ekeland and Hofer's \cite{eh89} and in Viterbo's \cite{vit89} proofs). Our proof here is no exception: The proof of the crucial invariance property (vi) uses the existence of suitable Palais-Smale sequences, which are our replacement for periodic orbits. The fact that we are assuming the image of the ball to be convex allows us to use a dual variational principle, for which the existence of these Palais-Smale sequences is a simple consequence of the fact that the functional is bounded from below. In principle, the same idea could work and prove the non-squeezing theorem in a more general, if not in {\em the} general, case, by considering the direct action functional, but the difficulty there seems to be how to produce suitable Palais-Smale sequences.

\section{Symplectic structures on Hilbert spaces}
\label{sec1}

In this section we recall some basic facts about symplectic structures on infinite dimensional Hilbert spaces. A classical reference for these facts, in a more general Banach setting, is the book of Chernoff and Marsden \cite{cm74}. Let $\mathbb{H}$ be a real Hilbert space. A {\em symplectic structure} on $\mathbb{H}$ is given by a skew-symmetric continuous bilinear form 
\[
\omega:\mathbb{H} \times \mathbb{H} \rightarrow \R 
\]
which is {\em non-degenerate}, meaning that the associated bounded linear operator $\Omega:\mathbb{H} \rightarrow \mathbb{H}^*$,  which is defined as
\begin{equation}
\label{defJ}
\langle \Omega x,y \rangle = \omega(x,y) \qquad \forall x,y\in \mathbb{H},
\end{equation}
where $\langle \cdot,\cdot \rangle$ denotes the duality pairing, is an isomorphism. By the skew symmetry of $\omega$ we have $\Omega^*=-\Omega$, where $\Omega^* : \mathbb{H}^{**} \rightarrow \mathbb{H}^*$ is the adjoint of $\Omega$ and we are identifying the reflexive space $\mathbb{H}$ with its bidual.

The choice of a Hilbert inner product $(\cdot,\cdot)$ on $\mathbb{H}$ determines a bounded linear operator $J : \mathbb{H} \rightarrow \mathbb{H}$ such that
\[
(Jx,y) = \omega(x,y) \qquad \forall x,y\in \mathbb{H}.
\]
Being the composition of $\Omega$ by the isomorphism $\mathbb{H}^* \cong \mathbb{H}$ induced by the inner product, $J$ is also an isomorphism. The skew symmetry of $\omega$ now reads $J^T = - J$, where $J^T: \mathbb{H} \rightarrow \mathbb{H}$ is the transposed operator with respect to the inner product. When one of the following equivalent conditions hold, the inner product $(\cdot,\cdot)$ is said to be compatible with the symplectic structure $\omega$:
\begin{enumerate}[(i)]
\item $\Omega$ is an isometry (where $\mathbb{H}^*$ is endowed with the dual norm).
\item $J$ is an isometry.
\item $J$ is a complex structure (that is, $J^2=-I$). 
\end{enumerate}
The equivalence of (i) and (ii) is clear, because the isomorphism $\mathbb{H}^*\cong \mathbb{H}$ which is induced by the inner product is an isometry. The equivalence between (ii) and (iii) follows from the fact that $J^T=-J$. 

When $\mathbb{H}$ is finite-dimensional, the existence of a compatible inner product can be deduced from the existence of a symplectic basis, which can be constructed by induction on the dimension. In general, a compatible inner product can be easily constructed by using operator calculus. Indeed, starting from any inner product $(\cdot,\cdot)_0$ on $\mathbb{H}$, the symplectic form $\omega$ is represented as $\omega(x,y)=(J_0 x,y)_0$, where $J_0=-J_0^T$ is an invertible bounded operator. Let $A$ be the (symmetric, positive) square root of the positive operator $-J_0^2=J_0^TJ_0$. Since $J_0$ is invertible, $A$ is positive definite, and since $J_0$ is normal, it commutes with $A$, so that $J:=A^{-1}J_0$ is a square root of $-I$. The equivalent inner product $(x,y):=(Ax,y)_0$ is then compatible with $\omega$, as 
\[
\omega(x,y)=(J_0x,y)_0=(AA^{-1}J_0x,y)_0=(Jx,y).
\]

A linear isomorphism $\Phi$ from  $(\mathbb{H}_1,\omega_1)$ onto $(\mathbb{H}_2,\omega_2)$ is said to be symplectic if
\[
\omega_2(\Phi x,\Phi y) = \omega_1(x,y) \qquad \forall x,y\in \mathbb{H}_1.
\]
If $\Omega_1: \mathbb{H}_1 \rightarrow \mathbb{H}_1^*$ and $\Omega_2: \mathbb{H}_2 \rightarrow \mathbb{H}_2^*$ are the isomorphisms which are associated to $\omega_1$ and $\omega_2$, the above condition can be rewritten as
\[
\Phi^* \Omega_2 \Phi = \Omega_1.
\]
Using compatible inner products on $\mathbb{H}_1$ and $\mathbb{H}_2$ and denoting by $J_1$ and $J_2$ the corresponding complex structures, this is equivalent to
\[
\Phi^T J_2 \Phi = J_1.
\]
The basic example of symplectic Hilbert spaces is given by complex Hilbert spaces, as in the following:

\begin{ex}
\label{complex}
Let $\mathbb{H}_{\C}$ be a {\rm complex} Hilbert space with (Hermitian) inner product $(\cdot,\cdot)_{\C}$ and Hilbert norm $\|x\|_{\C} :=\sqrt{(x,x)_{\C}}$. Then the real normed vector space $\mathbb{H}$ obtained from $\mathbb{H}_{\C}$ by restriction of the scalar field is a real Hilbert space with respect to the (real) inner product $(x,y):=\re (x,y)_{\C}$. The linear operator  $J$ on $\mathbb{H}$ mapping $x$ into $ix$ is an orthogonal square root of $-I$, and 
\[
\omega(x,y):=-\im(x,y)_{\C}=(Jx,y)
\]
defines a symplectic structure on $\mathbb{H}$, compatible with the scalar product $(\cdot,\cdot)$.
\end{ex}

Conversely, every real symplectic Hilbert(able) space $(\mathbb{H},\omega)$ can be seen as arising from the above construction. Indeed, one considers a compatible inner product $(\cdot,\cdot)$ on $\mathbb{H}$ and the associated skew-symmetric isometry $J$ which represents $\omega$ with respect to it. Then $J$ is an orthogonal square root of $-I$, and $\mathbb{H}$ as a $\R[J]$-module is a complex vector space $\mathbb{H}_{\C}$. The formula   
\[
(x,y)_{\C} :=(x,y)-i(Jx,y)
\] 
defines a Hermitian inner product on $\mathbb{H}_{\C}$ that induces the same norm as $(\cdot,\cdot)$, and $(\mathbb{H},(\cdot,\cdot), J)$ is obtained from $\mathbb{H}_{\C}$ by restriction of scalars as described in Example \ref{complex}.

Compatible inner products on a real symplectic Hilbert space are of course not unique. However the corresponding unit balls are all linearly symplectmorphic: If $(\cdot,\cdot)_1$ and $(\cdot,\cdot)_2$ are compatible inner products on $(\mathbb{H},\omega)$ with associated complex structures $J_1$ and $J_2$, then any complex linear isometry from $(\mathbb{H},J_1,(\cdot,\cdot)_1 - i (J\cdot,)_1)$ onto $(\mathbb{H},J_2,(\cdot,\cdot)_2 - i (J\cdot,)_2)$ is a symplectic isomorphism mapping the unit ball of $(\cdot,\cdot)_1$ onto the unit ball of $(\cdot,\cdot)_2$. 

Two symplectic Hilbert spaces $(\mathbb{H}_1,\omega_1)$ and $(\mathbb{H}_2,\omega_2)$ with the same Hilbert dimension are symplectically isomorphic: Indeed, there are complex Hilbert space structures $((\cdot, \cdot)_1, J_1)$ on $\mathbb{H}_1$ and $((\cdot, \cdot)_2,J_2)$ on $\mathbb{H}_2$ such that $\omega_1(x,y)=-\im(x,y)_1$ and $\omega_2(x,y)=-\im(x,y)_2$. These two complex Hilbert spaces structures are certainly isomorphic, and every complex linear (surjective) isometry between them is also a symplectic isomorphism.

\medskip

From now on $(\cdot,\cdot)$ denotes a (real) Hilbert product on $\mathbb{H}$ compatible with $\omega$, $\|\cdot\|$ is the associated norm, and $J$ is the bounded operator on $\mathbb{H}$ representing $\omega$ with respect to $(\cdot,\cdot)$. The dual norm on $\mathbb{H}^*$ is denoted by $\|\cdot\|_*$.

A closed linear subspace $\mathbb{H}_0$ of $\mathbb{H}$ is said to be {\em symplectic} if $\omega$ restricts to a symplectic form on $\mathbb{H}_0$. In this case, the {\em symplectic projector} onto $\mathbb{H}_0$ is the projector onto $\mathbb{H}_0$ along its symplectic orthogonal complement 
\[
\mathbb{H}_0^{\perp_{\omega}} := \set{x\in \mathbb{H}}{\omega(x,y) = 0 \; \forall y\in \mathbb{H}}.
\]
It is straightforward to check that $\mathbb{H}_0$ is symplectic if and only if
\[
\mathbb{H} = J  \mathbb{H}_0 \oplus \mathbb{H}_0^{\perp},
\]
where $\mathbb{H}_0^{\perp}$ denotes the orthogonal complement of $\mathbb{H}_0$ with respect to the inner product. Using the identity $J^2=-I$, the above condition is equivalent to
\[
\mathbb{H} = \mathbb{H}_0 \oplus J \mathbb{H}_0^{\perp}.
\]
This proves the following:

\begin{lem}
\label{fincodim}
Let $\omega$ be a symplectic form on the Hilbert space $\mathbb{H}$, and let $\mathbb{H}_0$ be a closed linear subspace of $\mathbb{H}$. Then $\mathbb{H}_0$ is symplectic if and only if $\mathbb{H}_0^{\perp}$ is symplectic.
\end{lem}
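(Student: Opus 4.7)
The plan is to invoke the characterization derived in the paragraph immediately preceding the lemma and then apply it symmetrically to $\mathbb{H}_0$ and to $\mathbb{H}_0^{\perp}$. That characterization says: a closed subspace $\mathbb{K}\subset \mathbb{H}$ is symplectic if and only if $\mathbb{H} = J \mathbb{K} \oplus \mathbb{K}^{\perp}$, and, via $J^2=-I$ and the bijectivity of $J$, this is equivalent to $\mathbb{H}=\mathbb{K}\oplus J\mathbb{K}^{\perp}$. Since $\mathbb{H}_0$ is closed we have $(\mathbb{H}_0^{\perp})^{\perp}=\mathbb{H}_0$, so the two instances of this criterion (applied once to $\mathbb{H}_0$ and once to $\mathbb{H}_0^{\perp}$) become literally the same equation on subspaces of $\mathbb{H}$, which will establish the biconditional.

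In more detail, I would first write down the equivalence already obtained,
\[
\mathbb{H}_0 \text{ symplectic} \ \Longleftrightarrow\ \mathbb{H} = J\mathbb{H}_0 \oplus \mathbb{H}_0^{\perp} \ \Longleftrightarrow\ \mathbb{H} = \mathbb{H}_0 \oplus J\mathbb{H}_0^{\perp},
\]
then substitute $\mathbb{H}_0^{\perp}$ for $\mathbb{H}_0$ throughout and use $(\mathbb{H}_0^{\perp})^{\perp}=\mathbb{H}_0$ to obtain
\[
\mathbb{H}_0^{\perp} \text{ symplectic} \ \Longleftrightarrow\ \mathbb{H} = J\mathbb{H}_0^{\perp} \oplus \mathbb{H}_0.
\]
The right-hand condition here is visibly the same decomposition of $\mathbb{H}$ as the one in the second form of the previous chain, so the two symplecticness statements are equivalent.

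There is essentially no obstacle: the only content beyond the preceding paragraph is the observation that the criterion is symmetric in $\mathbb{H}_0$ and $\mathbb{H}_0^{\perp}$ once one uses the idempotence of orthogonal complementation for closed subspaces. If needed for completeness I would also remark that the direct sums appearing are topological direct sums of closed subspaces, which is automatic from the continuity of $J$ and the closedness of $\mathbb{H}_0$ (hence of $\mathbb{H}_0^{\perp}$ and of $J\mathbb{H}_0^{\perp}$, as $J$ is an isomorphism).
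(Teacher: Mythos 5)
Your proof is correct and follows exactly the paper's own argument: the paper also derives the criterion $\mathbb{H}=J\mathbb{H}_0\oplus\mathbb{H}_0^{\perp}$, rewrites it via $J^2=-I$ as $\mathbb{H}=\mathbb{H}_0\oplus J\mathbb{H}_0^{\perp}$, and observes that this is the same condition one gets by applying the criterion to $\mathbb{H}_0^{\perp}$. Nothing to add.
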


We conclude this section by extending to this infinite-dimensional setting other  familiar notions from finite-dimensional symplectic geometry. The {\em Liouville vector field} $Y: \mathbb{H} \rightarrow \mathbb{H}$ is the radial vector field defined by
\[
Y(x) := \frac{x}{2}.
\]
It has the property that the contraction of $\omega$ along $Y$ is a primitive of $\omega$: If $\lambda$ is the smooth one-form on $\mathbb{H}$ defined by $\lambda := \imath_Y \omega$, that is
\[
\lambda(x)[u] = \omega(Y(x),u) = \frac{1}{2} \omega(x,u)=\frac{1}{2} (Jx,u)\qquad  \forall x,u\in \mathbb{H},
\]
then $d\lambda=\omega$. Indeed:
\[
d\lambda(x)[u,v] = d (\lambda(\cdot)[v])(x) [u] - d (\lambda(\cdot)[u])(x) [v] = \frac{1}{2} \omega(u,v) - \frac{1}{2} \omega(v,u) = \omega(u,v).
\]
The one-form $\lambda$ is the standard {\em Liouville form} of $(\mathbb{H},\omega)$. Its kernel at $x\neq 0$ is the hyperplane
\begin{equation}
\label{kerlambda}
\ker \lambda(x) = J \bigl( (\R x)^{\perp} \bigr).
\end{equation}
Let $\T:=\R/\Z$.  We denote by 
\[
\mathbb{A}(x) := \int_{\T} x^*(\lambda) = - \frac{1}{2} \int_{\T} (J\dot{x}(t),x(t))\, dt = - \frac{1}{2} \int_{\T} \langle \Omega \dot{x}(t),x(t)\rangle \, dt
\]
the symplectic action of the absolutely continuous closed curve $x:\T \rightarrow \mathbb{H}$. By Stokes theorem, $\mathbb{A}(x)$ coincides with the integral of $\omega$ over any oriented disc in $\mathbb{H}$ which is bounded by the closed oriented curve $x$.

\section{Symplectomorphisms, Hamiltonian vector fields and characteristics}
\label{symchar}

Let $1\leq k \leq \infty$. A $C^k$ diffeomorphism $\varphi: A \rightarrow A'$ between open subsets $A,A'$ of $\mathbb{H}$ is called a $C^k$ {\em symplectomorphism} if $\varphi^* \omega = \omega$, that is, if  its differential $d\varphi(x)$ is a symplectomorphism for every $x\in A$.
A map $\varphi:X\rightarrow X'$ between arbitrary subsets $X,X'$ of $\mathbb{H}$ is said to be a $C^k$ symplectomorphism if it is a homeomorphism and there are open neighborhoods $A$ and $A'$ of $X$ and $X'$ such that $\varphi$ extends to a $C^k$ symplectomorphism from $A$ onto $A'$.

Symplectomorphisms between simply connected domains preserve the symplectic action of closed curves: If $x: \T \rightarrow A \subset \mathbb{H}$ is an absolutely continuous closed curve and $\varphi: A \rightarrow A'$ is a $C^1$ symplectomorphism between simply connected domains, then the closed 1-form $\varphi^* \lambda - \lambda$ is the differential of some function $h$, and hence
\[
\mathbb{A}(\varphi(x)) = \int_{\T} x^*(\varphi^* \lambda) = \int_{\T} x^*(\lambda + dh) = \int_{\T} x^*(\lambda) = \mathbb{A}(x).
\]

The standard way of producing symplectomorphisms on $(\mathbb{H},\omega)$ is the integration of Hamiltonian vector fields: A differentiable real function $H$ on an open subset $A$ of $\mathbb{H}$ defines the {\em   Hamiltonian vector field} $X_H : A \rightarrow \mathbb{H}$ by the identity
\[
\imath_{X_H} \omega = -dH.
\]
Equivalently, $X_H$ can be expressed in terms of the compatible inner product as
\[
X_H = J \nabla H.
\]
The Hamiltonian ODE
\[
\dot{x}(t) = X_H(x(t))
\]
can be equivalently written as
\[
- J \dot{x}(t) = \nabla H(x(t)) \qquad \mbox{or} \qquad 
- \Omega \dot{x}(t) = dH(x(t)).
\]
The local flow $\phi_t^{X_H}$ of a Hamiltonian vector field $X_H$ of class $C^k$, $k\geq 1$, preserves the energy levels $H^{-1}(c)$ and consists of $C^k$ symplectomorphisms. The latter fact remains true also when integrating time-dependent Hamiltonian vector fields.

Let $S$ be a hypersurface in $\mathbb{H}$ of class $C^1$. The kernel of the restriction of $\omega$ to $S$ defines a 1-dimensional distribution on $S$,
\[
\mathscr{D}_S(x) = \ker \omega|_{T_x S} = J (T_x S)^{\perp},
\]
which is called the {\em characteristic distribution of $S$}. When $S$ is the boundary of a set $C$, this distribution is {\em oriented} by declaring $J n_C(x)\in \mathscr{D}_S(x)$ to be a positive vector, where $n_C(x)$ is the outer unit normal. In this case, a $C^1$ curve $x:\R \rightarrow \partial S$ which is everywhere tangent to $\mathscr{D}_S$ and positively oriented is said to be a {\em characterstic curve} of $S$. If $S$ is a level set of a function $H\in C^1(\mathbb{H})$, then the Hamiltonian vector field $X_H$ is tangent to $\mathscr{D}_S$ on $S$, and it is positively oriented when $S$ is a regular level set of $H$ and $H<H|_{S}$ on the open set which is bounded by $S$.  Therefore, characteristic curves are, up to an orientation preserving time reparametrization, solutions of the Hamiltonian ODE defined by $X_H$ having some (and therefore every) point on $S$. 
In particular, {\em closed characteristics} correspond to periodic orbits of $X_H$ on $S$.

Now consider the case $S=\partial C$, where $C$ is the closure of an open convex set with a $C^1$ boundary.  Let us show that the action of a closed characteristic $x: \T \rightarrow \partial C$ is always a positive number. Up a translation, we may assume that $0$ belongs to the interior of $C$. Moreover, there holds
\[
\dot{x}(t) = f(t) J n_{C}(x(t))
\]
for some positive function $f$, and we obtain
\[
\mathbb{A}(x) = - \frac{1}{2} \int_{\T} (J \dot{x},x) \, dt =  \frac{1}{2} \int_{\T} f(x)(n_{C}(x),x) \, dt >0,
\]
because the convexity of $C$ implies that $(n_{C}(x),x)$ is a positive function. As a matter of fact, if $r>0$ is such that $rB\subset C$, where $B$ denotes the closed unit ball of $\mathbb{H}$,  then
\begin{equation}
\label{normale}
(n_C(x),x) \geq r \qquad \forall x\in \partial C.
\end{equation}
Indeed, from the inclusion
\[
rB \subset C \subset \set{x+u\in \mathbb{H}}{(u,n_C(x))\leq 0}
\]
we deduce that $rn_C(x)$ has the form $x+u$ with $(u,n_C(x))\leq 0$, and the inequality
\[
r = (rn_C(x),n_C(x)) = (x,n_C(x)) + (u,n_C(x)) \leq (x,n_C(x))
\]
implies (\ref{normale}).

\section{An example}
\label{example}

In this section we show that symplectomorphisms on infinite-dimensional Hilbert spaces can exhibit behaviours which are forbidden in finite dimensions. 

Let $0<a<b$ and set $\mathbb{H}=L^2((a,b),\C)$, which we see as a real Hilbert space with the inner product
\[
(u,v) := \re \int_a^b u\bar{v}\, dx \qquad \forall u,v\in \mathbb{H}.
\]
As we have seen in Example \ref{complex}, the continuous skew symmetric bilinear form
\[
\omega(u,v) := - \im \int_a^b u\bar{v}\, dx \qquad \forall u,v\in \mathbb{H},
\]
is a symplectic form on $\mathbb{H}$, and $(\cdot,\cdot)$ is a compatible inner product whose associated complex structure $J$ is the multiplication by $i$.

On $\mathbb{H}$ we consider the smooth Hamiltonian
\[
H : \mathbb{H} \rightarrow \R, \qquad H(u) := \frac{1}{2}  \int_a^b x |u(x)|^2\, dx.
\]
Since $a>0$, this function is a positive definite quadratitc form, and its sublevels $\{H\leq c\}$, $c>0$, are bounded convex neighborhoods of the origin having the ellipsoid $H^{-1}(c)$ as boundary. 

The Hamiltonian vector field of $H$ is
\[
X_H(u) = i M_x u,
\]
where $M_x: \mathbb{H} \rightarrow \mathbb{H}$ is the multiplication operator by the function $x$. Therefore, the Hamiltonian ODE which is induced by $H$ is
\[
\frac{\partial u}{\partial t} (t,x) = i x\, u(t,x),
\]
and its flow is given by
\[
\phi_t^{X_H}(u)(x) = e^{itx} u(x)\qquad \forall u\in \mathbb{H}.
\]
This flow consists of linear isometries. It is easy to see that it has no periodic orbits other than the trivial one $u=0$. Indeed, if  the orbit of $u$ is $T$-periodic, then 
\[
e^{iT x} u(x) = u(x) \quad \mbox{a.e.}.
\]
It follows that 
\[
T x \in 2\pi \Z \qquad \mbox{for a.e. } x\in \mathrm{supp}\, u,
\]
that is,
\[
\mathrm{supp}\, u \subset (2\pi/T) \,\Z,
\]
so the support of $u$ must have measure zero. Therefore, the sublevels $\{H\leq c\}$, $c>0$, are examples of bounded convex neighborhoods of the origin whose boundary has no closed characteristics (in contrast to what happens in finite dimension, where the boundary of a bounded convex domain always admits closed characteristics, as proved by Weinstein in \cite{wei78}). 
More generally, every ellipsoid which is defined by the quadratic form associated to a complex linear self-adjoint operator with no eigenvalues has no closed characteristics.

The above Hamiltonian flow has also other peculiar properties, which a Hamiltonian flow on a finite-dimensional space with compact energy levels could not have: The only recurrent point of $\phi^{X_H}$ is $u=0$, and the flow has a strict Lyapunov function on $\mathbb{H} \setminus \{0\}$. Indeed, the following result holds.

\begin{prop}
\label{lyapunov}
There exists a positively 2-homogeneous function $K: \mathbb{H} \rightarrow \R$ which is smooth on $\mathbb{H}\setminus \{0\}$, bounded on bounded sets, and such that
\[
dK(u)[X_H(u)] < 0 \qquad \forall u\in \mathbb{H}\setminus \{0\}.
\]
Furthermore, there exists a constant $c$ such that $\|dK(u)\|_* \leq c \|u\|$ for every $u\in \mathbb{H}$.
\end{prop}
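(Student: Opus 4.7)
The plan is to exploit the fact that, in the Fourier variable, the flow $\phi_t^{X_H}$ becomes a pure translation: indeed, extending $u\in L^2((a,b),\C)$ by zero to $L^2(\R,\C)$ and setting $\hat u(y) := \int_\R u(x) e^{-iyx}\,dx$, one computes $\widehat{\phi_t^{X_H}(u)}(y) = \hat u(y-t)$. Thus any quantity of the form $\int_\R g(y)\,|\hat u(y)|^2\,dy$ with $g$ strictly decreasing will be strictly decreasing along the flow of $X_H$. The plan is to take exactly such a quadratic form, for a carefully chosen $g$, as our Lyapunov function.

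Concretely, I would fix a smooth, bounded, strictly decreasing function $g:\R\to\R$ with bounded derivative (e.g.\ $g(y)=-\arctan y$), and define
\[
K(u) := \int_\R g(y)\,|\hat u(y)|^2\,dy.
\]
That $K$ is positively $2$-homogeneous, in fact a continuous quadratic form, follows from Plancherel: $|K(u)|\leq \|g\|_\infty\,\|\hat u\|_{L^2}^2 = \|g\|_\infty\,\|u\|^2$. In particular $K$ is smooth on all of $\mathbb{H}$ (hence on $\mathbb{H}\setminus\{0\}$) and bounded on bounded sets. Its differential is the bounded symmetric bilinear form $dK(u)[v] = 2\,\re\!\int_\R g(y)\,\hat u(y)\,\overline{\hat v(y)}\,dy$, and Cauchy--Schwarz combined with Plancherel gives immediately $\|dK(u)\|_* \leq 2\|g\|_\infty\,\|u\|$, yielding the final bound with $c=2\|g\|_\infty$.

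The heart of the argument is the strict inequality $dK(u)[X_H(u)]<0$ for $u\neq 0$. From $K(\phi_t^{X_H}(u)) = \int_\R g(z+t)\,|\hat u(z)|^2\,dz$, dominated convergence (using $g'\in L^\infty$) lets me differentiate under the integral and the chain rule gives
\[
dK(u)[X_H(u)] = \left.\tfrac{d}{dt}\right|_{t=0} K(\phi_t^{X_H}(u)) = \int_\R g'(z)\,|\hat u(z)|^2\,dz.
\]
Since $g'<0$ pointwise, this integral is $\leq 0$, with equality only if $\hat u\equiv 0$ a.e. The step where I have to be a little careful is ruling this out for $u\neq 0$: because $u$ has compact support in $(a,b)$, the Paley--Wiener theorem shows $\hat u$ extends to an entire function of exponential type, so vanishing of $\hat u$ on any set of positive measure forces $u=0$. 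This gives the strict inequality and completes the verification of all the properties.
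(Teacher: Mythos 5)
Your proof is correct, and it takes a genuinely different and in fact shorter route than the paper. The paper's argument is ``dynamical'': it builds flow-box coordinates around each orbit with uniform bounds (Lemma \ref{tubolar}, whose properness rests on the Riemann--Lebesgue lemma), covers $H^{-1}(1)$ by countably many such tubes via Lindel\"of, and sums weighted bump functions of the form $-\chi(\|v\|/r_n)\arctan t$ measuring the time coordinate along each tube; the resulting $K$ is positively $2$-homogeneous but not smooth at the origin and is not a quadratic form. Your construction instead exploits the specific algebraic structure of this example --- the Hamiltonian vector field is multiplication by $ix$, hence conjugate under the Fourier transform to the translation flow $\hat u(y)\mapsto\hat u(y-t)$ --- and produces an explicit \emph{continuous quadratic form} $K(u)=\int_{\R}g(y)|\hat u(y)|^2\,dy$. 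All the required properties then drop out of Plancherel, and the Lyapunov inequality reduces to $\int_{\R}g'(y)|\hat u(y)|^2\,dy<0$, which holds for $u\neq 0$ because $g'<0$ everywhere and the Fourier transform is injective on $L^2$ (your appeal to Paley--Wiener is more than is needed here; it would only become necessary if $g'$ were allowed to vanish on a set of positive measure). What each approach buys: yours is self-contained, gives a $K$ that is smooth at $0$ as well, and makes the squeezing map $\phi_t^{X_{-K}}$ \emph{linear}, which is a nice strengthening of the example; the paper's tubular-neighbourhood construction is less explicit but more robust, as it applies verbatim to any flow without nontrivial recurrence admitting uniformly bounded flow boxes, independently of any Fourier-diagonalizable structure. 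Two cosmetic points: with your normalization of $\hat u$, Plancherel carries a factor $2\pi$, so the constants in $|K(u)|\le\|g\|_\infty\|u\|^2$ and $c=2\|g\|_\infty$ should be adjusted accordingly; and it is worth one line to justify the chain-rule identity $\frac{d}{dt}\big|_{t=0}K(\phi_t^{X_H}(u))=dK(u)[X_H(u)]$, which holds because $X_H$ is a bounded linear operator, so $t\mapsto\phi_t^{X_H}(u)$ is differentiable in $\mathbb{H}$.
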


Before proving this proposition, let us derive a consequence: From the identity
\[
dH(u)[X_{K}(u)] = -\omega(X_H(u),X_K(u)) = \omega(X_K(u),X_H(u)) = - dK(u)[X_H(u)]
\]
we deduce that
\[
dH(u)[X_{-K}(u)] = -  dH(u)[X_{K}(u)] = dK(u)[X_H(u)] < 0 \qquad \forall u\in \mathbb{H}\setminus \{0\}.
\]
Therefore, $H$ is a strict Lyapunov function for the Hamiltonian flow of $-K$ on $\mathbb{H}\setminus \{0\}$. Notice that this flow is globally defined, since 
\[
\|X_{-K}(u)\|=\|dK(u)\|_* \leq c \|u\| \qquad \forall u\in \mathbb{H}.
\]
The flow of $X_{-K}$ at time $t>0$ is a symplectomorphism which maps a bounded closed neighborhood of the origin into its interior part:
\[
\phi_t^{X_{-K}} ( \{ H\leq c\} ) \subset \{H<c\} \qquad \forall c>0, \; \forall t>0.
\]
Also such a behavior is of course impossible when $\mathbb{H}$ is finite-dimensional, due to the conservation of volume.

\begin{rem}
There is a certain freedom in the construction of the Lyapunov function $K$, but it seems that in every case the flow of $X_{-K}$ cannot squeeze the ellipsoids $\{H\leq c\}$ in a uniform way: The properties of the symplectic capacity $c_{\mathbb{H}}$ which are established in the following sections imply that as long as $\phi_t^{X_{-K}}(\{H\leq c\})$ remains convex, it cannot be contained in some ellipsoid $\{H\leq c'\}$ with $c'<c$, because the capacity of the latter set is strictly less than the capacity of $\{H\leq c\}$.
\end{rem}

The remaining part of this section is devoted to the proof of Proposition \ref{lyapunov}. This proof relies on the following:

\begin{lem}
\label{tubolar}
For every $u_0\in H^{-1}(1)$ there exists a positive number $r_0=r_0(u_0)$ such that the following is true: Denote by $D_{r_0}$ the closed ball of radius $r_0$ in the orthogonal complement to $X_H(u_0)$. Then the map
\[
\psi:\R \times D_{r_0} \rightarrow \mathbb{H}, \qquad (t,v) \mapsto \phi^{X_H}_t(u_0+v),
\]
is a smooth diffeomorphism onto a closed neighborhood $U_0$ of the orbit of $u_0$. Moreover, for every integer $k\geq 1$ the $k$-th differentials of $\psi$ and $\psi^{-1}$ are uniformly bounded by a constant which does not depend on $u_0\in H^{-1}(1)$.
\end{lem}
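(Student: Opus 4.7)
The plan is to analyze the explicit map $\psi(t,v) = M_t(u_0+v)$ directly, where $M_t$ denotes multiplication by $e^{itx}$ (a unitary operator on $\mathbb{H}$). I would first compute the differential
\[
d\psi(t,v)[s,w] = M_t\bigl(s\, X_H(u_0+v) + w\bigr), \qquad s \in \R,\ w \in X_H(u_0)^\perp,
\]
and show that, for $r_0$ below a threshold $r_1 = r_1(a,b)$, the bounded linear operator $L : (s,w) \mapsto s X_H(u_0+v) + w$ is an isomorphism of $\R \oplus X_H(u_0)^\perp$ onto $\mathbb{H}$, with $\|L\|$ and $\|L^{-1}\|$ controlled uniformly in $u_0 \in H^{-1}(1)$, $t\in\R$, and $v\in D_{r_0}$. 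This rests on the uniform bounds $2a \leq \|X_H(u_0)\|^2 \leq 2b$ and $\|u_0\|^2 \leq 2/a$, both consequences of $\int x\,|u_0|^2\,dx = 2$, together with the estimate $(X_H(u_0+v), X_H(u_0)) \geq 2a - O(r_0)$, which shows that $X_H(u_0+v)$ has a uniformly bounded-below component along $X_H(u_0)$. The inverse function theorem then makes $\psi$ a local diffeomorphism with uniform estimates.

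The main obstacle is upgrading this to global injectivity on $\R \times D_{r_0}$. Suppose $\psi(t_1,v_1) = \psi(t_2,v_2)$ and set $s := t_2 - t_1$; then
\[
(e^{isx}-1)\,u_0 = v_2 - e^{isx}\, v_1.
\]
I would handle two regimes separately. For $|s| \leq s_0 := \pi/(2b)$, I take the scalar product of both sides with $X_H(u_0) = ixu_0$: the left side produces $\int \sin(sx)\,x\,|u_0|^2\,dx \geq (4a/\pi)\,|s|$ by the bound $\sin(sx)/(sx) \geq 2/\pi$ valid for $|sx|\leq \pi/2$, while the orthogonality $v_1, v_2 \perp X_H(u_0)$ combined with $|e^{isx}-1| \leq |sx|$ bounds the right-hand side by $C_1\, r_0\, |s|$ via Cauchy--Schwarz. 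A choice of $r_0$ depending only on $a,b$ then forces $s = 0$, and hence $v_1 = v_2$. For $|s| \geq s_0$, I invoke the Riemann--Lebesgue lemma: since $|u_0|^2$ is integrable on $(a,b)$, $\int e^{isx}|u_0|^2\,dx \to 0$ as $|s|\to \infty$, so the continuous function $s \mapsto \|(e^{isx}-1)u_0\|$ tends to $\sqrt{2}\,\|u_0\| > 0$ at infinity and vanishes only at $s = 0$. Therefore $\delta(u_0) := \inf_{|s| \geq s_0} \|(e^{isx}-1)u_0\|$ is strictly positive, and taking $r_0 \leq \delta(u_0)/2$ contradicts the displayed equation since $\|(e^{isx}-1)u_0\| \leq 2 r_0 < \delta(u_0)$. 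This second regime is the only step where $r_0$ must genuinely depend on $u_0$, rather than only on $a,b$.

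Finally, uniform bounds on the derivatives of $\psi$ are immediate from the explicit formulas: every partial derivative of $\psi$ is of the form $(ix)^k M_t$ applied to $u_0+v$ or to one of the directions $w$, and vanishes as soon as it involves two derivatives in $v$; its operator norm is at most $b^k$ times the norm of the argument, independently of $u_0 \in H^{-1}(1)$. The uniform bounds on the derivatives of $\psi^{-1}$ then follow from the uniform invertibility of $d\psi$ established at the start, through the standard recursive formula expressing $d^k(\psi^{-1})$ as a universal polynomial in $d^j\psi$ (for $j \leq k$) and in $(d\psi)^{-1}$.
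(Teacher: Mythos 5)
Your overall strategy coincides with the paper's: compute $d\psi$ explicitly, get uniform invertibility from $\|xu_0\|^2\geq 2a$, split the injectivity into a small-time and a large-time regime, and use Riemann--Lebesgue for the latter. The small-time regime is actually handled more sharply than in the paper: where the paper only invokes the inverse mapping theorem at $(0,0)$ to produce some $\tau,r_2>0$, your pairing of $(e^{isx}-1)u_0=v_2-e^{isx}v_1$ with $X_H(u_0)$ and the bound $\sin\theta/\theta\geq 2/\pi$ yields an explicit threshold for $r_0$ depending only on $a,b$, and correctly isolates the large-time regime as the only place where $r_0$ must depend on $u_0$. The derivative bounds and the inductive bounds on $d^k(\psi^{-1})$ are fine.

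There is, however, a genuine gap: you never prove that the image $U_0=\psi(\R\times D_{r_0})$ is \emph{closed}, which is an explicit part of the statement. Injectivity plus the local diffeomorphism property give you a diffeomorphism onto the image, but the domain $\R\times D_{r_0}$ is non-compact in the $t$-direction, and for a general injective local diffeomorphism the image of a closed set need not be closed (the orbit could a priori accumulate somewhere outside the image). The missing ingredient is properness of $\psi$: if $\psi(t_n,v_n)\to u$, one must show $(t_n)$ is bounded, then pass to a convergent subsequence $t_n\to t$ and conclude $v_n\to e^{-itx}u-u_0$, so that $u$ lies in the image. The boundedness of $(t_n)$ again comes from Riemann--Lebesgue: as in the paper's identity $\lim_{t\to\pm\infty}\|\phi^{X_H}_t(w)-u\|^2=\|w\|^2+\|u\|^2>0$ applied with $w=u_0+v_n$, an unbounded subsequence of $(t_n)$ would contradict $\psi(t_n,v_n)\to u$. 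This is a short addition and uses only tools you already deploy, but without it the lemma is not fully proved; closedness of $U_0$ is precisely what makes the cut-off construction in the proof of Proposition \ref{lyapunov} produce a globally smooth function $f_n$, so it cannot be dropped.
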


\begin{proof}
The proof makes use of the following observation: For every $u$ and $v$ in $\mathbb{H}$ there holds
\begin{equation}
\label{nonrec}
\lim_{t\rightarrow\pm \infty} \|\phi_t^{X_H}(u)-v\|^2 = \|u\|^2 + \|v\|^2.
\end{equation}
Indeed, this follows from the formula
\[
\|\phi_t^{X_H}(u)-v\|^2 = \|e^{itx} u - v\|^2 = \|u\|^2 + \|v\|^2 - 2 \re \int_a^b e^{itx} u(x) \bar{v}(x) \, dx,
\]
because the last integral is infinitesimal for $t\rightarrow \pm \infty$ by the Riemann-Lebesgue Lemma.

Denote by $V$ the orthogonal complement to $X_H(u_0)=ixu_0$. The differential of $\psi$ at $(t,v)$ is the linear mapping
\[
d\psi(t,v)[(s,w)] = e^{itx}(ix(u_0+v)s + w) \qquad \forall (s,w)\in \R\times V.
\]
The equation $d\psi(t,v)[(s,w)]=u$, $u\in \mathbb{H}$, can be uniquely solved for $(s,w)$ by
\[
\begin{split}
s &= \frac{(e^{-itx}u,ixu_0)}{\|xu_0\|^2 + (xv,xu_0)}, \\
w &=e^{-itx} u - ix (u_0 + v)s,
\end{split}
\]
provided that the denominator in the first formula does not vanish. By using the estimate
\[
\|xu_0\|^2 = \int_a^b x^2 |u_0|^2\, dx \geq a \int_a^b x|u_0|^2\, dx = 2a H(u_0) = 2a,
\]
we find that this denominator has the lower bound
\[
\|xu_0\|^2 + (xv,xu_0) \geq \|xu_0\| (\|xu_0\| - \|xv\|) \geq \sqrt{2a}( \sqrt{2a} - b \|v\|).
\]
We deduce that if $r_1 < \sqrt{2a}/b$ and $v\in D_{r_1}$, then $d\psi(t,v)$ is invertible, and its inverse is uniformly bounded. Therefore, $\psi$ is a local diffeomorphism on $\R \times D_{r_1}$, and for every integer $k\geq 1$ the $k$-th differentials of $\psi$ and of its local inverses are uniformly bounded.

Now we prove that $\psi$ is injective on $\R\times D_{r_0}$, if $r_0\leq r_1$ is small enough. Using the fact that $\psi|_{\{0\}\times V}$ is clearly injective and the fact that $\phi^{X_H}$ is a flow, it is enough to check that
\[
\psi(t,v) \neq \psi(0,w)
\]
whenever $t\neq 0$ and $v,w$ are vectors in $D_{r_0}$. Since $d\psi(0,0)$ is invertible, the inverse mapping theorem implies that there exists $r_2>0$ and $\tau>0$ such that
\begin{equation}
\label{vicini}
0< |t|\leq \tau, \quad v,w\in D_{r_2}  \quad \Rightarrow \quad
\psi(t,v) \neq \psi(0,w).
\end{equation}
By (\ref{nonrec}) we have
\[
\lim_{t\rightarrow \pm \infty} \|e^{itx} u_0 - u_0\|^2 = \lim_{t\rightarrow\pm \infty} \|\phi_t^{X_H}(u_0)-u_0\|^2 = 2\|u_0\|^2>0.
\]
Together with the fact that the curve $t\mapsto \phi^{X_H}_t(u_0)$ is injective, we deduce that the number
\[
\rho:= \inf_{|t|\geq \tau} \|e^{itx} u_0 - u_0\|
\]
is positive. Let $r_0$ be a positive number such that $r_0\leq \min\{r_1,r_2\}$ and $r_0<\rho/2$. We claim that if $v$ and $w$ belong to $D_{r_0}$ and $t\neq 0$, then $\psi(t,v)\neq \psi(0,w)$. If $|t|\leq \tau$ this follows from (\ref{vicini}). For $|t|\geq \tau$ we have
\[
\|\psi(t,v) - \psi(0,w)\| = \|e^{itx}(u_0+v) - (u_0+w)\| \geq \|e^{itx} u_0 - u_0\| - \|v\| - \|w\| \geq \rho - 2 r_0 >0.
\]
This proves our claim and the injectivity of $\psi$ on $\R \times D_{r_0}$.

In order to conclude that $\psi: \R \times D_{r_0} \rightarrow \mathbb{H}$ is a diffeomorphism onto a closed subset, there remains to show that $\psi$ is a proper map. Let $(t_n,v_n)$ be a sequence in $\R \times D_{r_0}$ such that $\psi(t_n,v_n)$ converges to some $u\in \mathbb{H}$. From (\ref{nonrec}) we have
\[
\lim_{t\rightarrow \pm \infty} \|\psi(t,v_n)-u\|^2 = \lim_{t\rightarrow \pm \infty} \|\phi_t^{X_H}(u_0+v_n)-u\|^2 = \|u_0+v_n\|^2 + \|u\|^2 >0,
\]
and hence the sequence $(t_n)$ is bounded. Up to a subsequence, we may assume that $(t_n)$ converges to some $t\in \R$, and we deduce that $(v_n)$ converges to $e^{-itx} u - u_0$. Therefore the map $\psi$ is proper.
\end{proof}

\begin{proof}[Proof of Proposition \ref{lyapunov}]  Since the space $H^{-1}(1)$ has the Lindel\"{o}f property, it has a countable subset $\{u_n\}_{n\in \N}$ such that the interiors of the closed neighborhoods $U_n$ of the orbits of $u_n$ which are constructed in lemma \ref{tubolar} cover $H^{-1}(1)$.  Denote by
\[
\psi_n : \R \times D_{r_n} \rightarrow U_n, \qquad (t,v) \mapsto \phi_t^{X_H}(u_n+v),
\]
the corresponding diffeomorphisms. By construction,
\[
d\psi_n(t,v)\left[ \frac{\partial}{\partial t} \right] = X_H(\psi_n(t,v)) \qquad \forall (t,v)\in \R \times D_{r_n}.
\]
Let $\chi: \R  \rightarrow \R$ be a smooth function such that $\chi=1$ on $(-\infty,0]$, $0<\chi<1$ on $(0,1)$, and $\chi=0$ on $[1,+\infty)$. Consider the functions
\[
\tilde{f}_n : \R \times D_{r_n}\rightarrow \R, \qquad \tilde{f}_n(t,v) := - \chi(\|v\|/r_n) \arctan t,
\]
and
\[
f_n: H^{-1}(1) \rightarrow \R, \quad f_n = \left\{ \begin{array}{ll} \tilde{f}_n \circ \psi_n^{-1} & \mbox{on } H^{-1}(1)\cap U_n, \\ 0 & \mbox{on } H^{-1}(1)\setminus U_n. \end{array} \right.
\]
The function $f_n$ is smooth, and if $u=\psi_n(t,v)$ belongs to the interior of $U_n$ then
\begin{equation}
\label{lyap}
\begin{split}
df_n(u)[X_H(u)] &= d\tilde{f}_n(\psi_n^{-1}(u))\circ d\psi_n^{-1}(u)[X_H(u)] = d\tilde{f}_n(t,v)\left[ \frac{\partial}{\partial t}\right]  \\ &= \frac{\partial \tilde{f}_n}{\partial t} (t,v) = - \frac{\chi(\|v\|/r_n)}{1+t^2} < 0.
\end{split}
\end{equation}
Moreover, for every integer $k\geq 0$ there is a constant $\tilde{c}_k$ such that
\[
\|d^k\tilde{f}_n(t,v)\| \leq \frac{\tilde{c}_k}{r_n^k} \qquad \forall (t,v)\in \R\times D_{r_n},
\]
and by the boundedness properties of the differentials of $\psi_n^{-1}$, 
\begin{equation}
\label{stima}
\|d^k f_n(u)\| \leq \frac{c_k}{r_n^k} \qquad \forall u\in H^{-1}(1),
\end{equation}
for some constant $c_k$.

The function $K:\mathbb{H} \rightarrow \R$ which is obtained as the positively 2-homogeneous extension of the function
\[
H^{-1}(0)\rightarrow \R, \qquad u\mapsto  \sum_{n\in \N} 2^{-n} e^{-1/r_n} f_n(u) 
\]
has the desired properties. Indeed, for every integer $k\geq 0$ the above series converges absolutely in the Banach space $C^k_b(H^{-1}(1))$ consisting of $C^k$ functions with bounded differentials up to order $k$, because
\[
\sum_{n\in \N} 2^{-n} e^{-1/r_n} \|d^kf_n\|_{\infty} \leq \sum_{n\in \N} 2^{-n} e^{-1/r_n}\frac{c_k}{r_n^k} \leq c_k \left(\max_{r> 0} \frac{e^{-1/r}}{r^k} \right) \sum_{n\in \N} 2^{-n} < +\infty,
\]
and hence it defines a bounded smooth function on $H^{-1}(1)$ with bounded differentials. The strict Lyapunov property of $K$ follows from (\ref{lyap}).
\end{proof}

\section{A symplectic capacity for convex subsets of $\mathbb{H}$}

The dual of $\mathbb{H}$ is also a symplectic vector space with the symplectic form
\[
\omega^* (\xi,\eta) := \omega(\Omega^{-1} \xi, \Omega^{-1} \eta) 
\qquad \forall \xi,\eta\in \mathbb{H}^*,
\]
and $\Omega : (\mathbb{H},\omega) \rightarrow (\mathbb{H}^*,\omega^*)$ is a linear symplectomorphism.
The dual inner product $(\cdot,\cdot)_*$ is compatible with $\omega^*$ and the corresponding complex structure is $-J^*$. The Liouville vector field and the Liouville form on $(\mathbb{H}^*,\omega^*)$ are denoted by $Y^*$ and $\lambda^*$, respectively:
\[
Y^*(\xi) := \frac{\xi}{2}, \qquad 
\lambda^*(\xi)[\eta] := \omega^*(Y^*(\xi),\eta) = \frac{1}{2} \omega^*(\xi,\eta) = -\frac{1}{2} (J^* \xi,\eta)_* \qquad \forall \xi,\eta\in \mathbb{H}^*.
\]
The symplectic action of an absolutely continuous closed curve $\xi:\T \rightarrow \mathbb{H}^*$ is denoted by $\mathbb{A}^*(\xi)$:
\[
\mathbb{A}^*(\xi) := \int_{\T} \xi^*(\lambda^*) = \frac{1}{2} \int_{\T} (J^*\dot{\xi}(t),\xi(t))_*\, dt = \frac{1}{2} \int_{\T} \langle \xi(t), \Omega^{-1} \dot{\xi}(t) \rangle \, dt.
\]

Let $\mathscr{C}$ be the set of all bounded closed convex neighborhoods of the origin in $\mathbb{H}$.
We define a function
\[
a_{\infty} : \mathscr{C} \rightarrow (0,+\infty)
\]
as follows:
\[
a_{\infty}(C) := \sup \set{\mathbb{A}^*(\xi)}{\xi: \T \rightarrow \mathbb{H}^* \mbox{ absolutely continuous with } \dot{\xi} \in C^0 \mbox{ a.e.}},
\]
where $C^0\subset \mathbb{H}^*$ denotes the polar set of $C$, that is
\[
C^0 := \set{\xi \in \mathbb{H}^*}{\langle \xi,x \rangle \leq 1 \; \forall x\in C}.
\]
The fact that $C$ contains a ball of positive radius $r$ about the origin implies that $C^0$ is contained in the ball of radius $1/r$. Therefore, the curves $\xi$ which appear in the definition of $a_{\infty}(C)$ have uniformly bounded derivative, and hence $a_{\infty}(C)$ is finite. Since $C$ is bounded, its polar set $C^0$ contains a ball about the origin. The closed curve 
\[
\xi(t) = e^{-2\pi t J^*} \xi_0, \qquad \xi_0\neq 0,
\]
has positive action $\pi \|\xi_0\|^2_*$ and satisfies $\|\dot{\xi}\|_* = 2\pi \|\xi_0\|_*$. Therefore $\dot{\xi}$ belongs to $C^0$ if $\|\xi_0\|_*$ is small enough, and $a_{\infty}(C)$ is strictly positive. 

We now define 
\[
c_{\mathbb{H}} : \mathscr{C} \rightarrow (0,+\infty) , \qquad
c_{\mathbb{H}}(C) := \frac{1}{4 a_{\infty}(C)}.
\]
We refer to the quantity $c_{\mathbb{H}}(C)$ as to the {\em symplectic capacity of $C$}. The following theorem summarizes the properties of this symplectic capacity which follow directly from the definition. The invariance property is more delicate and we deal with it in Sections \ref{secinv}, \ref{gensymp} and \ref{geninv}.

\begin{thm}
\label{main}
The function $c_{\mathbb{H}}:  \mathscr{C} \rightarrow (0,+\infty)$ satisfies the following properties:
\begin{enumerate}[(i)]
\item {\em (Monotonicity)} If $C_1\subset C_2$ then $c_{\mathbb{H}}(C_1) \leq c_{\mathbb{H}}(C_2)$.
\item {\em (Homogeneity)} $c_{\mathbb{H}} (rC) = r^2 c_{\mathbb{H}}(C)$ for every $r>0$. 
\item {\em (Normalization)} If $B$ is the closed unit ball of $\mathbb{H}$, then $c_{\mathbb{H}} (B) = \pi$.
\item {\em (Projection)} Let $P$ be the symplectic projector onto a symplectic subspace $\mathbb{H}_0\subset \mathbb{H}$. Then $c_{\mathbb{H}_0}(PC) \geq c_{\mathbb{H}}(C)$.
\item {\em (Continuity)} The function $c_{\mathbb{H}} : \mathscr{C} \rightarrow (0,+\infty)$ is continuous with respect to the Hausdorff metric.
\end{enumerate}
\end{thm}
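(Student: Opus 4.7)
\bigskip

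\noindent\textbf{Proof proposal for Theorem \ref{main}.}
The plan is to derive each of the five properties directly from the definition of $a_{\infty}$, exploiting the standard functorial behavior of the polar operation under inclusions, dilations, and projections.

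\emph{Monotonicity and Homogeneity.} The polar map $C\mapsto C^0$ is order-reversing and satisfies $(rC)^0=r^{-1}C^0$. Therefore, if $C_1\subset C_2$, every admissible test curve for $C_1$ is admissible for $C_2$, giving $a_{\infty}(C_1)\leq a_{\infty}(C_2)$ and hence $c_{\mathbb{H}}(C_1)\leq c_{\mathbb{H}}(C_2)$. For homogeneity, the substitution $\eta(t):=r\,\xi(t)$ bijects admissible curves for $rC$ with admissible curves for $C$ and rescales the quadratic functional $\mathbb{A}^*$ by $r^2$, yielding $a_{\infty}(rC)=r^{-2}a_{\infty}(C)$, i.e.\ $c_{\mathbb{H}}(rC)=r^2c_{\mathbb{H}}(C)$.

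\emph{Normalization.} Since the inner product is compatible with $\omega$, the polar of the closed unit ball $B$ is the closed unit ball of $\mathbb{H}^*$, and the constraint becomes $\|\dot\xi\|_*\leq 1$ a.e. Using that $\mathbb{A}^*$ is invariant under constant translations of $\xi$, I normalize $\int_{\mathbb{T}}\xi\,dt=0$; the standard Wirtinger inequality then gives $\|\xi\|_{L^2}\leq (2\pi)^{-1}\|\dot\xi\|_{L^2}$. A Cauchy--Schwarz estimate on $\mathbb{A}^*(\xi)=\tfrac12\int(J^*\dot\xi,\xi)_*\,dt$, combined with $\|\dot\xi\|_{L^2}\leq 1$, yields $\mathbb{A}^*(\xi)\leq 1/(4\pi)$. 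Equality is attained by the explicit circle $\xi(t)=(2\pi)^{-1}e^{-2\pi tJ^*}\xi_0$ with $\|\xi_0\|_*=1$, whose action equals $\pi\|\xi_0\|_*^2/(2\pi)^2=1/(4\pi)$. Hence $a_{\infty}(B)=1/(4\pi)$ and $c_{\mathbb{H}}(B)=\pi$.

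\emph{Projection.} This is the one step where we must use something specific about the symplectic projector. Writing $P:\mathbb{H}\to\mathbb{H}_0$ and $\Omega_0:\mathbb{H}_0\to\mathbb{H}_0^*$, a short calculation using $\mathbb{H}=\mathbb{H}_0\oplus\mathbb{H}_0^{\perp_\omega}$ shows the key intertwining identity $\Omega|_{\mathbb{H}_0}=P^*\Omega_0$, equivalently $\Omega^{-1}P^*=\Omega_0^{-1}$ as maps $\mathbb{H}_0^*\to\mathbb{H}_0\subset\mathbb{H}$. Given any test curve $\xi:\mathbb{T}\to\mathbb{H}_0^*$ with $\dot\xi\in(PC)^0$ a.e., I lift it to $\eta:=P^*\xi:\mathbb{T}\to\mathbb{H}^*$; the polar constraint passes to the lift because $\langle P^*\dot\xi,x\rangle=\langle\dot\xi,Px\rangle\leq1$ for $x\in C$. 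The intertwining identity gives $\mathbb{A}_{\mathbb{H}}^*(\eta)=\mathbb{A}_{\mathbb{H}_0}^*(\xi)$, so $a_{\infty,\mathbb{H}_0}(PC)\leq a_{\infty,\mathbb{H}}(C)$, which is property (iv).

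\emph{Continuity.} Because every $C\in\mathscr{C}$ contains a fixed ball $rB$ and is contained in a fixed ball $RB$, the polars of a Hausdorff-convergent sequence $C_n\to C$ are uniformly bounded in $\mathbb{H}^*$ and their Minkowski gauges $\mu_{C_n^0}$ converge uniformly on bounded sets to $\mu_{C^0}$. The admissible set of curves (parametrized, say, on a fixed reflexive Banach space such as $W^{1,\infty}(\mathbb{T},\mathbb{H}^*)$ modulo constants, intersected with the pointwise constraint $\mu_{C_n^0}(\dot\xi)\leq1$) thus depends continuously on $n$, and the functional $\mathbb{A}^*$ is bounded and continuous on this uniformly bounded set; a routine $\liminf/\limsup$ comparison of supremum values over nearly identical constraint sets then yields $a_\infty(C_n)\to a_\infty(C)$ and hence the continuity of $c_{\mathbb{H}}$. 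The only delicate point here is checking that maximizing sequences can be chosen with a quantitative modulus independent of $n$, which follows from the uniform bound on $\|\dot\xi\|_*$ coming from $C_n\supset rB$. None of these five items is expected to be a substantial obstacle; the real difficulty of the paper, deferred to Sections \ref{secinv}, \ref{gensymp} and \ref{geninv}, is the invariance property (vi) under general symplectomorphisms.
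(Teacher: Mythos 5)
Your proposal covers the same ground as the paper and reaches the correct conclusions, but the monotonicity step as written contains two compensating sign errors. If $C_1\subset C_2$ then $C_1^0\supset C_2^0$, so the constraint $\dot\xi\in C_1^0$ is the \emph{weaker} one: every curve admissible for $C_2$ is admissible for $C_1$, not the other way around, whence $a_{\infty}(C_1)\geq a_{\infty}(C_2)$. Your chain asserts the reverse inclusion of admissible curves and then deduces $c_{\mathbb{H}}(C_1)\leq c_{\mathbb{H}}(C_2)$ from $a_{\infty}(C_1)\leq a_{\infty}(C_2)$, which is also backwards, since $c_{\mathbb{H}}=1/(4a_{\infty})$ reverses inequalities in $a_{\infty}$. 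The two mistakes cancel and the stated conclusion is right, but each intermediate assertion is false and needs to be corrected (note that you do handle the analogous inversion correctly in the homogeneity step).

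The rest is sound. Items (ii) and (iii) are essentially the paper's arguments (polar of the ball, Cauchy--Schwarz plus Wirtinger/Poincar\'e for the upper bound, the explicit circle $e^{-2\pi tJ^*}\xi_0$ for the lower bound). For (iv), your lift $\eta=P^*\xi$ is precisely the paper's identification of $\mathbb{H}_0^*$ with the annihilator $\Omega\mathbb{H}_0$ of $\mathbb{H}_0^{\perp_{\omega}}$, and the intertwining identity $\Omega|_{\mathbb{H}_0}=P^*\Omega_0$ correctly transports both the polar constraint and the action; this is a legitimate, slightly more explicit packaging of the same computation. For (v) you argue directly via uniform convergence of the gauges $\mu_{C_n^0}$, whereas the paper derives continuity formally from (i) and (ii) using the inclusions $C_n\subset(1+\epsilon)C$ and $C\subset(1+\epsilon)C_n$, which sidesteps any discussion of maximizing sequences or function spaces. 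Your route can be made to work (rescale a curve admissible for $C$ by $(1+\delta_n)^{-1}$ to make it admissible for $C_n$ and compare actions), but as written it is vaguer than necessary, and the parenthetical claim that $W^{1,\infty}(\T,\mathbb{H}^*)$ is reflexive is false --- fortunately nothing in your argument actually uses reflexivity.
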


\begin{proof} 
Property (i) is immediate: If $C_1 \subset C_2$, then $C_1^0 \supset C_2^0$, so $a_{\infty}(C_1) \geq a_{\infty}(C_2)$ and hence $c_{\mathbb{H}}(C_1)\leq   c_{\mathbb{H}}(C_2)$. 

Property (ii) follows from the identity
\[
\mathbb{A}^*(r\xi) = r^2 \mathbb{A}^*(\xi),
\]
and from the fact that $(rC)^0 = (1/r) C^0$.

We prove (iii). The polar of the closed unit ball $B$ of $\mathbb{H}$ is the closed unit ball of $\mathbb{H}^*$. The curve 
\[
\xi(t) = e^{-2\pi t J^*} \xi_0
\]
satisfies 
\[
\|\dot{\xi}(t)\|_* = \|-2\pi J^* e^{-2\pi J^* t} \xi_0\|_* = 2\pi \|\xi_0\|_* \qquad \forall t\in \T,
\] 
because $J^*$ and $e^{-2\pi J^*t}$ are isometries. Therefore, $\xi$ is an admissible curve in the definition of $a_{\infty}(B)$ when $\|\xi_0\|_*=1/(2\pi)$ and hence
\[
a_{\infty}(B) \geq \mathbb{A}^*(\xi) = \pi \|\xi_0\|_*^2 = \frac{1}{4 \pi}.
\]
On the other hand, if $\eta:\T \rightarrow \mathbb{H}^*$ is any absolutely continuous closed curve with $\|\dot{\eta}\|_*\leq 1$ a.e. then, denoting by 
\[
\eta_0 := \int_{\T} \eta(t)\, dt
\]
its average, the Cauchy-Schwarz and Poincar\'e inequalities imply
\[
\begin{split}
\mathbb{A}^*(\eta) &= \frac{1}{2} \int_{\T} (J^* \dot{\eta},\eta)_*\, dt = \frac{1}{2} \int_{\T} (J \dot{\eta},\eta-\eta_0)_*\, dt \leq \frac{1}{2} \|\dot{\eta}\|_2 \|\eta-\eta_0\|_2 \\ &\leq \frac{1}{2} \|\dot{\eta}\|_2 \cdot \frac{1}{2\pi} \|\dot{\eta}\|_2 = \frac{1}{4\pi} \|\dot{\eta}\|_2^2 \leq \frac{1}{4\pi},
\end{split}
\]
where $\|\cdot\|_2$ denotes the $L^2$ norm.
We conclude that $a_{\infty}(B)=1/(4\pi)$ and hence $c_{\mathbb{H}}(B)=\pi$.

We prove (iv). We identify $\mathbb{H}_0^*$ with the annihilator of the symplectic orthogonal complement of $\mathbb{H}_0$ (that is, with $\Omega \mathbb{H}_0$). Then the polar of $PC\subset \mathbb{H}_0$ coincides with
\[
\begin{split}
(PC)^0 &= \set{\xi\in \mathbb{H}_0^*}{\langle \xi,x\rangle\leq 1 \; \forall x\in P(C)} \\
&= \set{\xi\in \mathbb{H}_0^*}{\langle\xi,Py\rangle \leq 1\; \forall y\in C} \\
&=\set{\xi\in \mathbb{H}_0^*}{\langle\xi,y\rangle \leq 1\; \forall y\in C} \\
&= C^0 \cap \mathbb{H}_0^*,
\end{split}
\]
where we have used the fact that $\langle \xi,(I-P)y\rangle =0$ because $\xi$ annihilates the symplectic orthogonal of $\mathbb{H}_0$.
Let $\xi:\T \rightarrow \mathbb{H}_0^*$ be a closed absolutely continuous curve as in the definition of $a_{\infty}(PC)$, that is 
\[
\dot{\xi} \in (PC)^0 =C^0 \cap \mathbb{H}_0^*  \quad \mbox{a.e.}.
\]
In particular, $\dot{\xi} \in C^0$ a.e., so $\xi$ is an admissible curve in the definition of $a_{\infty}(C)$ and hence $a_{\infty}(C) \geq a_{\infty}(PC)$. The inequality $c_{\mathbb{H}_0}(PC) \geq c_{\mathbb{H}}(C)$ follows.

Property (v) could be proved directly but is also a consequence of the monotonicity and homogeneity properties. In fact, let $(C_n)\subset \mathscr{C}$ be a sequence which converges to $C\in \mathscr{C}$ in the Hausdorff distance. Fix some $\epsilon>0$. Using the fact that $C$ is a neighbourhood of $0$, the Hausdorff convergence of $(C_n)$ to $C$ implies that 
\[
C_n \subset C + \epsilon C = (1+\epsilon) C 
\]
for $n$ large enough, where the last identity follows from the fact that $C$ is convex. By (i) and (ii) we obtain
\begin{equation}
\label{hau1}
c_{\mathbb{H}}(C_n) \leq (1+\epsilon)^2 c_{\mathbb{H}}(C) = c_{\mathbb{H}}(C) + \epsilon (2+\epsilon) c_{\mathbb{H}}(C) \qquad \mbox{for $n$ large enough}. 
\end{equation}
The fact that $(C_n)$ converges to some neighbourhood of $0$ implies that there are $K_0,K_1\in \mathscr{C}$ such that $K_0 \subset C_n \subset K_1$ for every $n\in \N$. Hence the Haussdorff convergence of $(C_n)$ to $C$ implies that
\[
C \subset C_n + \epsilon K_0 \subset C_n + \epsilon C_n = (1+\epsilon) C_n 
\]
for $n$ large enough. Together with (i) and (ii) we deduce that
\begin{equation}
\label{hau2}
c_{\mathbb{H}}(C) \leq (1+\epsilon)^2 c_{\mathbb{H}}(C_n) \leq  c_{\mathbb{H}}(C_n) + \epsilon (2+\epsilon) c_{\mathbb{H}}(K_1) \qquad \mbox{for $n$ large enough}. 
\end{equation}
Since $\epsilon$ is arbitrary, (\ref{hau1}) and (\ref{hau2}) imply that $(c_{\mathbb{H}}(C_n))$ converges to $c_{\mathbb{H}}(C)$. This proves (v).
\end{proof}

\begin{rem}
It is also easy to show that $c_{\mathbb{H}}$ is invariant with respect to {\em linear} symplectomorphisms. Indeed, this follows from the identity $(\Phi C)^0 = (\Phi^*)^{-1} C^0$, where $\Phi:\mathbb{H} \rightarrow \mathbb{H}$ is a linear isomorphism. On the other hand, the proof of the invariance of $c_{\mathbb{H}}$ with respect to small translations does not seem to be substantially less difficult than the general case that we treat in Sections \ref{secinv}, \ref{gensymp} and \ref{geninv}.
\end{rem}

\begin{ex}
Let $(\mathbb{H},(\cdot,\cdot))$ be a complex Hilbert space, endowed with its standard symplectic structure (see Example \ref{complex}; here we drop the subscript $\C$ from the notation). Let $A: \mathbb{H} \rightarrow \mathbb{H}$ be a (complex linear) bounded self-adjoint operator with spectrum $\sigma(A)$ contained in $(0,+\infty)$, and denote by
\[
\rho(A) := \max \sigma(A)
\]
the spectral radius of $A$, which by self-adjointness coincides with the operator norm of $A$. The ellipsoid
\[
E := \set{x\in \mathbb{H}}{(Ax,x) \leq 1}
\]
is an element of $\mathscr{C}$, and we want to show that its symplectic capacity is
\[
c_{\mathbb{H}}(E) = \frac{\pi}{\rho(A)}.
\]
Here it is convenient to identify the dual of $\mathbb{H}$ with $\mathbb{H}$ itself by the Hermitian product. With this identification, the polar of $E$ is the ellipsoid
\[
E^{\circ} = \set{x\in \mathbb{H}}{(A^{-1}x,x) \leq 1}.
\]
Let $\epsilon>0$. Since $A$ is self-adjoint, we have
\[
\inf_{\|x\|=1} (A^{-1} x,x) = \min \sigma(A^{-1}) = \min \frac{1}{\sigma(A)} = \frac{1}{\max \sigma(A)} = \frac{1}{\rho(A)},
\]
so we can find a unit vector $x_0\in \mathbb{H}$ such that
\[
(A^{-1} x_0 ,x_0) \leq \frac{1}{\rho(A)} (1+\epsilon).
\]
Let $r>0$. The closed curve $x(t) = r e^{2\pi i t} x_0$ satisfies
\[
(A^{-1} \dot{x}(t), \dot{x}(t)) = 4 \pi^2 r^2 (A^{-1} x_0,x_0) \leq  \frac{4 \pi^2 r^2}{\rho(A)} (1+\epsilon),
\]
so $\dot{x}$ takes values into $E^0$ if
\[
r^2 = \frac{\rho(A)}{4\pi^2(1+\epsilon)}.
\]
With such a choice of $r$, we obtain
\[
a_{\infty}(E) \geq \mathbb{A}(x) = \pi r^2 = \frac{\rho(A)}{4\pi(1+\epsilon)}.
\]
Since $\epsilon$ is arbitrary, we deduce that
\[
a_{\infty}(E) \geq \frac{\rho(A)}{4\pi},
\]
and hence
\[
c_{\mathbb{H}}(E) = \frac{1}{4 a_{\infty}(E)} \leq \frac{\pi}{\rho(A)}.
\]
On the other hand, since
\[
(Ay,y) \leq \|A\| \|y\|^2 = \rho(A) \|y\|^2 \qquad \forall y\in \mathbb{H},
\]
the ball of radius $\rho(A)^{-1/2}$ is contained in $E$, and by the monotonicity property of $c_{\mathbb{H}}$ we deduce the opposite inequality:
\[
c_{\mathbb{H}}(E) \geq \frac{\pi}{\rho(A)}.
\]
\end{ex}

\section{Equivalent formulations and closed characteristics}
\label{eqsec}

Let $C\subset \mathbb{H}$ be a closed bounded convex neighborhood of the origin. We denote by 
\[
\mu_C: \mathbb{H} \rightarrow [0,+\infty)
\] 
its Minkowski gauge, that is the positively 1-homogeneous convex functional
\[
\mu_C(x) := \inf\set{t\geq 0}{x\in tC}.
\]
With this notation we have
\[
a_{\infty}(C) = \sup \set{ \mathbb{A}^*(\xi)}{\xi: \T \rightarrow \mathbb{H}^* \mbox{ absolutely continuous, } \| \mu_{C^0}(\dot{\xi})\|_{\infty} \leq 1}.
\]
It is useful to relax the condition which defines the set of curves appearing in the above set: For $1\leq p < \infty$ we set
\[
a_p(C) := \sup \set{ \mathbb{A}^*(\xi)}{\xi: \T \rightarrow \mathbb{H}^* \mbox{ absolutely continuous, } \| \mu_{C^0}(\dot{\xi})\|_p \leq 1},
\]
where $\|\cdot\|_p$ denotes the $L^p$ norm. The following proposition provides us with some useful alternative formulas for the symplectic capacity of a convex set. See \cite[Proposition 2.1]{ao08} for similar computations in a finite-dimensional setting.

\begin{prop}
\label{cara} Let $C$ be an element of $\mathscr{C}$. Then the numbers $a_p(C)$ coincide for every $p\in [1,\infty]$. In particular,
\[
c_{\mathbb{H}}(C) = \frac{1}{4a_p(C)} \qquad \forall p\in [1,\infty].
\]
Furthermore
\begin{equation}
\label{infi}
c_{\mathbb{H}}(C) = \frac{1}{4} \inf \set{\|\mu_{C^0}(\dot{\xi})\|_p^2}{\xi: \T \rightarrow \mathbb{H}^* \mbox{ absolutely continuous, }\mathbb{A}^*(\xi)=1},
\end{equation}
for every $p\in [1,\infty]$.
\end{prop}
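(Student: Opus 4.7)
The plan is to exploit three elementary scaling properties: the action $\mathbb{A}^*$ is 2-homogeneous in the curve and invariant under orientation-preserving reparametrizations, while $\mu_{C^0}(\dot\xi)$ and each $L^p$ norm are 1-homogeneous. The ordering $\|f\|_p\le\|f\|_q$ for $1\le p\le q\le\infty$ on the probability space $\T$ shows at once that the set of admissible curves for $a_p$ shrinks as $p$ grows, whence
\[
a_1(C)\ge a_p(C)\ge a_\infty(C)\qquad\text{for every }p\in[1,\infty].
\]
It therefore suffices to prove the converse inequality $a_\infty(C)\ge a_1(C)$; all $a_p$ will then coincide and the identity $c_{\mathbb{H}}(C)=1/(4a_p(C))$ holds for every $p$.

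To establish $a_\infty(C)\ge a_1(C)$, fix a curve $\xi$ admissible for $a_1$ with $\mathbb{A}^*(\xi)>0$ (curves of non-positive action contribute nothing, since $a_\infty(C)>0$) and write $\phi:=\mu_{C^0}(\dot\xi)$, $L:=\|\phi\|_1\le 1$. The curve cannot be constant, so $L>0$. For $\epsilon>0$ I introduce the absolutely continuous bijection
\[
\tilde\Phi_\epsilon:\T\to\T,\qquad \tilde\Phi_\epsilon(t):=\frac{1}{L+\epsilon}\int_0^t\bigl(\phi(u)+\epsilon\bigr)\,du,
\]
whose derivative is bounded below by $\epsilon/(L+\epsilon)>0$, so its inverse $\tau_\epsilon$ is Lipschitz. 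The reparametrized loop $\eta_\epsilon:=\xi\circ\tau_\epsilon$ is absolutely continuous and closed, satisfies $\mathbb{A}^*(\eta_\epsilon)=\mathbb{A}^*(\xi)$ by reparametrization invariance, and by the chain rule
\[
\mu_{C^0}\bigl(\dot\eta_\epsilon(s)\bigr)=\tau_\epsilon'(s)\,\phi\bigl(\tau_\epsilon(s)\bigr)=\frac{(L+\epsilon)\,\phi(\tau_\epsilon(s))}{\phi(\tau_\epsilon(s))+\epsilon}\le L+\epsilon.
\]
Rescaling by the factor $1/(L+\epsilon)$ produces a curve admissible for $a_\infty(C)$ whose action equals $\mathbb{A}^*(\xi)/(L+\epsilon)^2$, so $a_\infty(C)\ge\mathbb{A}^*(\xi)/(L+\epsilon)^2$. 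Letting $\epsilon\to 0$ and using $L\le 1$ yields $a_\infty(C)\ge \mathbb{A}^*(\xi)/L^2\ge \mathbb{A}^*(\xi)$, and taking the supremum over admissible $\xi$ gives the desired inequality.

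The formula (\ref{infi}) then follows from bare homogeneity. Given $\xi$ with $\mathbb{A}^*(\xi)>0$ and $\|\mu_{C^0}(\dot\xi)\|_p\le 1$, the rescaled curve $\xi':=\xi/\sqrt{\mathbb{A}^*(\xi)}$ satisfies $\mathbb{A}^*(\xi')=1$ and $\|\mu_{C^0}(\dot\xi')\|_p^2=\|\mu_{C^0}(\dot\xi)\|_p^2/\mathbb{A}^*(\xi)\le 1/\mathbb{A}^*(\xi)$; conversely, rescaling any $\xi'$ of unit action by the factor $1/\|\mu_{C^0}(\dot\xi')\|_p$ produces an admissible curve for $a_p(C)$ of action $1/\|\mu_{C^0}(\dot\xi')\|_p^2$. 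These two operations exchange the supremum defining $a_p(C)$ with the reciprocal of the infimum on the right-hand side of (\ref{infi}), giving $a_p(C)\cdot\inf=1$ and hence the claimed identity.

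The only point demanding real care is the reparametrization step, where one must accommodate the possibility that $\phi$ vanishes on a set of positive measure, so that an honest arc-length reparametrization is not available; the $\epsilon$-regularization built into $\tilde\Phi_\epsilon$ keeps its derivative uniformly positive and produces a genuine Lipschitz bijection, after which the passage $\epsilon\to 0$ is harmless because the action bound $\mathbb{A}^*(\xi)/(L+\epsilon)^2$ is monotone in $\epsilon$.
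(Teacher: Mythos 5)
Your proof is correct and follows the same strategy as the paper's: monotonicity of the $L^p$ norms on the probability space $\T$ gives $a_p(C)\geq a_q(C)$ for $p<q$, a time reparametrization that flattens $\mu_{C^0}(\dot\xi)$ gives the reverse inequality $a_\infty(C)\geq a_1(C)$, and the identity (\ref{infi}) follows from the $2$-homogeneity of $\mathbb{A}^*$ and the $1$-homogeneity of $\mu_{C^0}$. The only difference is in the implementation of the reparametrization: the paper first replaces $\xi$ by a nearby smooth curve with nowhere-vanishing derivative and then solves the ODE $\tau'=1/\mu_{C^0}(\dot\xi(\tau))$, whereas your additive $\epsilon$-regularization of the time change $\tilde\Phi_\epsilon$ works directly with an arbitrary admissible absolutely continuous curve, which neatly sidesteps that smooth-approximation step.
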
  

\begin{proof}
Let $\xi:\T \rightarrow \mathbb{H}^*$ be an absolutely continuous closed curve. By the H\"older inequality and the fact that $\T$ has measure 1, the function
\[
p \mapsto \|\mu_{C^0}(\dot{\xi})\|_p,
\]
is increasing on $[1,+\infty]$, so there holds
\[
a_p(C) \geq a_q(C) \qquad \mbox{if } p,q\in [1,\infty] \mbox{ satisfy } p<q. 
\]
Therefore, it is enough to show that $a_1(C) \leq a_{\infty}(C)$. For every $\epsilon>0$ we can find a smooth closed curve $\xi:\T \rightarrow \mathbb{H}$ such that $\dot{\xi}(t)\neq 0$ for every $t\in \T$,
\[
\|\mu_{C^0}(\dot{\xi})\|_1 \leq 1,
\]
and
\[
\mathbb{A}^*(\xi)> a_1(C) - \epsilon.
\]
Let $\tau:\R \rightarrow \R$ be the solution of the Cauchy problem
\[
\tau'(s) = \frac{1}{\mu_{C^0}(\dot{\xi}(\tau(s)))} , \qquad \tau(0)=0.
\]
Then $\tau$ is strictly increasing, and we denote by $\sigma$ its inverse. Then
\[
\sigma(1) = \int_0^1 \sigma'(t)\, dt = \int_0^1 \frac{1}{\tau'(\sigma(t))}\, dt = \int_0^1 \mu_{C^0}(\dot{\xi}(t))\, dt \leq 1.
\] 
The absolutely continuous curve
\[
\eta: [0,1] \rightarrow \mathbb{H}^*, \qquad \eta(s) := \left\{ \begin{array}{ll} \xi(\tau(s)) & \mbox{for } 0\leq s \leq \sigma(1), \\ \xi(0)  & \mbox{for } \sigma(1) < s \leq 1, \end{array}  \right.
\]
is closed, satisfies
\[
\mathbb{A}^*(\eta)= \mathbb{A}^*(\xi) > a_1(C) - \epsilon,
\]
and
\[
\mu_{C^0}(\dot{\eta}(s) ) = \left\{ \begin{array}{cl} \tau'(s) \mu_{C_0}(\dot{\xi}(\tau(s))) = 1 & \mbox{for } 0\leq s < \sigma(1), \\ 0 & \mbox{for } \sigma(1)<s<1. \end{array} \right.
\]
Therefore, $\eta$ is an admissible curve in the definition of $a_{\infty}(C)$ and hence
\[
a_{\infty}(C) \geq \mathbb{A}^*(\eta) > a_1(C) - \epsilon.
\] 
We conclude that $a_{\infty}(C) \geq a_1(C)$ and thus all the $a_p(C)$'s coincide.

Furthermore we have, using the 2-homogeneity of $\mathbb{A}^*$, 
\[
\begin{split}
a_p(C) &= \sup_{\mathbb{A}^*(\xi)>0} \frac{\mathbb{A}^*(\xi)}{\|\mu_{C^0} (\dot{\xi})\|_p^2} = \left( \inf_{\mathbb{A}^*(\xi)>0} \frac{\|\mu_{C^0}(\dot{\xi})\|_p^2}{\mathbb{A}^*(\xi)} \right)^{-1} \\ &= \Bigl( \inf \set{\|\mu_{C^0}(\dot{\xi})\|_p^2}{\mathbb{A}^*(\xi)=1} \Bigr)^{-1}.
\end{split}
\]
Therefore,
\[
c_{\mathbb{H}}(C) = \frac{1}{4a_p(C)} =\frac{1}{4} \inf \set{\|\mu_{C^0}(\dot{\xi})\|_p^2}{\mathbb{A}^*(\xi)=1},
\]
as claimed.
\end{proof}

The next result relates the symplectic capacity $c_{\mathbb{H}}(C)$ to the minimal action of closed characteristics on the boundary of $C$, provided that such closed characteristics of minimal action exist. Recall that closed characteristics on the boundary of a convex set have always positive action, as shown at the end of Section \ref{symchar}.

\begin{thm}
\label{minicara}
Assume that the boundary of $C\in \mathscr{C}$ is of class $C^1$. If $x:\T \rightarrow \partial C$ is a closed characteristic on $\partial C$ then
\[
\mathbb{A}(x) \geq c_{\mathbb{H}}(C).
\]
Let $p\in [1,\infty]$.
If $\xi:\T \rightarrow \mathbb{H}^*$ is an absolutely continuous curve with $\|\mu_{C^0} (\dot{\xi})\|_p \leq 1$ (resp.\ with $\A^*(\xi)=1$) which achieves the supremum which defines $a_p(C)$ (resp.\ the infimum (\ref{infi})), then $-\Omega^{-1} \xi$ is homothetic to a closed characteristic $x$ on $\partial C$ such that
\[
\mathbb{A}(x) = c_{\mathbb{H}}(C). 
\]
When $\mathbb{H}$ is finite dimensional, the supremum which defines $a_p(C)$ (resp.\ the infimum (\ref{infi})) is a maximum (resp.\ a minimum), and hence $c_{\mathbb{H}}(C)$ coincides with the minimal action over all closed characteristics on $\partial C$. 
\end{thm}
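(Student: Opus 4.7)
The plan is to prove the three statements in order, using throughout the explicit correspondence between admissible dual curves and candidate closed characteristics through the map $y \mapsto -\Omega y$ combined with a suitable time reparametrization.

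For the inequality $\mathbb{A}(x) \geq c_{\mathbb{H}}(C)$, I build an explicit admissible curve for the $a_\infty$-formulation. Set $\xi(t) := -\alpha\,\Omega(x(\tau(t)))$ for a scalar $\alpha > 0$ and a monotone bijection $\tau:\T \to \T$ to be chosen. Under the identification $\mathbb{H}^* \cong \mathbb{H}$ given by the compatible inner product, $\Omega$ becomes $J$, and using $\dot{x}(s) = f(s) J n_C(x(s))$ with $f > 0$ together with $J^2 = -I$ one computes that $\dot\xi(t)$ corresponds to $\alpha f(\tau(t))\tau'(t)\, n_C(x(\tau(t)))$. The value of the support function of $C$ at $n_C(x(s))$ equals $h(s) := (n_C(x(s)), x(s))$, so the constraint $\dot\xi \in C^0$ a.e.\ becomes $\alpha f(\tau)\tau' h(\tau) \leq 1$. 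I would choose $\tau$ so that $f(\tau)\tau' h(\tau) \equiv K := \int_{\T} f h\, ds = 2\mathbb{A}(x)$ (uniquely determined by $G\circ\tau(t) = Kt$, where $G$ is the antiderivative of $f h$), and take $\alpha = 1/K$. A one-line check using $\Omega^* = -\Omega$ gives the identity $\mathbb{A}^*(-\Omega y) = \mathbb{A}(y)$, and combined with 2-homogeneity and the reparametrization-invariance of the action this yields $\mathbb{A}^*(\xi) = \alpha^2 \mathbb{A}(x) = 1/(4\mathbb{A}(x))$. Hence $a_\infty(C) \geq 1/(4\mathbb{A}(x))$ and $c_{\mathbb{H}}(C) \leq \mathbb{A}(x)$.

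For the extremizer assertion, I would apply Lagrange multipliers to the dual variational problem. It is convenient to work with the infimum formulation (\ref{infi}) for $p=2$, since the objective $\int_{\T} \mu_{C^0}(\dot\xi)^2\,dt$ is convex and the feasible set $\{\mathbb{A}^*(\xi)=1\}$ is a smooth quadric. The Minkowski gauge $\mu_{C^0}$ coincides with the support function $h_C$ of $C$, whose subdifferential $\partial h_C(\eta)$ at $\eta \neq 0$ is the face of $C$ on which the linear functional $\eta$ attains its maximum. The first-order condition at a minimizer $\xi$ then reads: there exists $\lambda > 0$ such that
\[
-\Omega^{-1} \dot\xi(t) \in \lambda\, \mu_{C^0}(\dot\xi(t))\, \partial h_C(\dot\xi(t)) \qquad \text{for a.e.\ } t\in \T.
\]
As in the proof of Proposition \ref{cara}, $\mu_{C^0}(\dot\xi(t))$ must be a.e.\ constant at an extremum (otherwise a time reparametrization strictly decreases the $L^2$-norm while preserving $\mathbb{A}^*$), so after rescaling we may assume $\mu_{C^0}(\dot\xi) \equiv c$ a.e. The curve $x(t) := -\lambda c\, \Omega^{-1}\xi(t)$ then lies on $\partial C$ a.e.\ and satisfies $\dot x(t) \in \R_+ \cdot J n_C(x(t))$ a.e., so it is, up to time reparametrization, a closed characteristic on $\partial C$; the computation of the first paragraph then gives $\mathbb{A}(x) = c_{\mathbb{H}}(C)$. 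The same argument applies to a maximizer of any $a_p(C)$ by Proposition \ref{cara}.

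For the finite-dimensional attainment statement, I would apply the direct method to (\ref{infi}) with $p=2$. Take a minimizing sequence $(\xi_n)$ with $\mathbb{A}^*(\xi_n) = 1$ and $\|\mu_{C^0}(\dot\xi_n)\|_2^2 \to 4 c_{\mathbb{H}}(C)$. Since both $\mathbb{A}^*$ and $\|\mu_{C^0}(\dot\cdot)\|_2$ are invariant under adding constants, normalize $\int_{\T} \xi_n\, dt = 0$. Because $C$ contains and is contained in balls about the origin, $\mu_{C^0}$ is equivalent to $\|\cdot\|_*$, so $(\dot\xi_n)$ is bounded in $L^2$ and, via Poincar\'e's inequality, $(\xi_n)$ is bounded in $H^1(\T,\mathbb{H}^*)$. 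In finite dimensions $H^1 \hookrightarrow C^0$ is compact, so along a subsequence $\xi_n \to \xi_*$ weakly in $H^1$ and uniformly. The functional $\mathbb{A}^*$ is continuous under this convergence, while $\xi \mapsto \int_{\T} \mu_{C^0}(\dot\xi)^2\,dt$ is weakly lower semicontinuous by convexity, so $\xi_*$ realizes the infimum; the previous paragraph then produces a closed characteristic with minimal action. The main obstacle I expect is the second step: passing from the a.e.\ subdifferential inclusion to the pointwise geometric statement that $-\lambda c\,\Omega^{-1}\xi$ parametrizes (up to reparametrization) a genuine closed characteristic on $\partial C$, since $\partial h_C$ can be multivalued and one must ensure both that the trajectory takes values on $\partial C$ and that the orientation condition $\dot x \in \R_+ \cdot J n_C$ holds.
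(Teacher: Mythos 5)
Your overall strategy coincides with the paper's: the first assertion is obtained by exhibiting the reparametrized curve $-\alpha\,\Omega(x\circ\tau)$ as an admissible competitor for $a_\infty(C)$, the second by first-order conditions for the dual problem at $p=2$ together with the du Bois-Reymond lemma and the duality between $\mu_{C^0}$ and $C$, and the third by the direct method in finite dimensions. Parts one and three are correct as written (part one is in fact slightly cleaner than the paper's, since it works with the geometric definition $\dot x=fJn_C(x)$ and the support function rather than with $dH_C$). The genuine problem is the displayed first-order condition in part two. The Lagrangian $\mu_{C^0}(\dot\xi)^2$ depends only on $\dot\xi$, while $d\mathbb{A}^*(\xi)[\eta]=-\int_{\T}\langle\dot\eta,\Omega^{-1}\xi\rangle\,dt$; hence the multiplier rule plus du Bois-Reymond gives an inclusion of the form $-\Omega^{-1}\xi(t)+\bar y\in\lambda\,\mu_{C^0}(\dot\xi(t))\,\partial h_C(\dot\xi(t))$ a.e., with $\bar y$ a \emph{constant} of integration -- the primitive $\xi$ appears on the left, not $\dot\xi$ as you wrote. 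As stated, your inclusion would put (a rescaling of) $-\Omega^{-1}\dot\xi(t)$ on $\partial C$, which is not what your next sentence uses; and the constant $\bar y$ cannot be dropped: it is precisely why the theorem says $-\Omega^{-1}\xi$ is \emph{homothetic} (dilation plus translation) to a closed characteristic rather than a positive multiple of one. Your curve $x(t)=-\lambda c\,\Omega^{-1}\xi(t)$ therefore need not lie on $\partial C$; the correct curve is $a\bigl(-\Omega^{-1}\xi(t)+\bar y\bigr)$ for a suitable $a>0$. Once the inclusion is corrected, your outline does close up exactly as in the paper: $y:=(-\Omega^{-1}\xi+\bar y)/(\lambda c)$ takes values in the face $\partial h_C(\dot\xi(t))\subset\partial C$, $\dot\xi(t)$ is an outward normal to $C$ there, and $-\Omega\dot y=\dot\xi/(\lambda c)$ yields the positively oriented characteristic condition, resolving the "main obstacle'' you flag at the end.

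Two smaller points. Your claim that "the same argument applies to a maximizer of any $a_p(C)$'' is too quick for $p\in[1,2)$: the monotonicity of $p\mapsto\|\mu_{C^0}(\dot\xi)\|_p$ only allows a maximizer for $p\geq 2$ to be fed into the $p=2$ case, and indeed the paper itself proves the extremizer statement only for $p\in[2,\infty]$, referring to Clarke's nonsmooth analysis for $p<2$; your reparametrization remark could be made into an argument, but then one must address the fact that a reparametrized curve is not literally homothetic to the original. Finally, in the finite-dimensional part you should also record that the minimizer has $\mu_{C^0}(\dot\xi)$ a.e.\ constant (which your reparametrization observation gives), so that it is simultaneously extremal for every $p$, as the full statement requires.
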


In particular, the above result shows that $c_{\mathbb{H}}$ coincides with the Ekeland-Hofer and with the Hofer-Zehnder symplectic capacity of $C$ when $\mathbb{H}$ is finite dimensional (see \cite[Proposition 3.10]{vit89} and \cite[Proposition 4]{hof90}). The proof of the above result uses standard arguments from Clarke's duality. For sake of completeness we review it in Section \ref{last}.

When $\mathbb{H}$ is infinite dimensional, the supremum  which defines $a_p(C)$ may not be achieved. Actually, $\partial C$ may contain no closed characteristics, as the example of the ellipsoid which is described in the first part of Section \ref{example} shows. 

On the other extreme, when $\mathbb{H}$ has dimension two and $C\in \mathscr{C}$
has $C^1$ boundary, $\partial C$ itself is the unique closed characteristic on $\partial C$ and has action $\mathrm{area}_{\omega}(C)$. Therefore, the last assertion of Theorem \ref{minicara} implies that in this case: 
\begin{equation}
\label{dim2}
c_{\mathbb{H}}(C) = \mathrm{area}_{\omega}(C).
\end{equation}
Since both $c_{\mathbb{H}}$ and the area are continuous with respect to the Hausdorff metric on $\mathscr{C}$, the above formula holds for an arbitrary $C\in \mathscr{C}$.

\begin{rem}
It is possible to define characteristics on $\partial C$ also when $\partial C$ is not $C^1$: A characteristic is an absolutely continuous curve on $\partial C$ whose derivative belongs a.e.\ to the normal cone of $C$ rotated by $J$. See \cite{cla81} and \cite[Sections II.4 and V.1]{eke90}. Using this definition, Theorem \ref{minicara} would hold for an arbitrary $C\in \mathscr{C}$. In finite dimensions, the theory of closed characteristics on non-smooth boundaries of convex sets has beautiful applications to the dynamics of convex billiards, see \cite{ao14}.
\end{rem}

\section{Invariance with respect to homogeneous symplectomorphisms}
\label{secinv}
In this section we begin the proof of the invariance property of $c_{\mathbb{H}}$, by considering a special class of symplectomorphisms.

A $C^1$ symplectomorphism $\varphi: \mathbb{H} \setminus \{0\} \rightarrow \mathbb{H} \setminus \{0\}$ is said to be {\em homogeneous} if it is a positively 1-homogeneous map:
\[
\varphi(tx) = t \, \varphi(x) \qquad \forall x\in \mathbb{H}, \; \forall t>0.
\]
Homogeneous symplectomorphisms can be extended continuously in $0$ by setting $\varphi(0)=0$, but such an extension is in general not differentiable at $0$. More about homogeneous symplectomorphism will be said in Section \ref{gensymp}.

If $C\in \mathscr{C}$ and $\mu_C$ is its Minkowski gauge, we set
\[
H_C := \frac{1}{2} \mu_C^2.
\]
The function $H_C$ is convex and positively 2-homogeneous. Moreover, the Fenchel conjugate $H_C^*$ of $H_C$ is the function $H_{C^0}:\mathbb{H}^* \rightarrow \R$. Indeed, this follows from the fact that $\mu_{C^0}$ coincides with the support function of $C$, 
\[
\mu_{C^0} (\xi ) = \sup_{x\in C} \langle \xi, x\rangle = \sup_{\mu_C(x)=1} \langle \xi, x\rangle \qquad \forall \xi\in \mathbb{H}^*,
\]
and from the computation
\[
\begin{split}
H_C^*(\xi) &:= \sup_{x\in \mathbb{H}} \left( \langle \xi,x \rangle - H_C(x) \right) =
\sup_{r\geq 0} \sup_{\mu_C(x)=1} \left( \langle \xi, r x \rangle - \frac{1}{2} \mu_C(r x)^2 \right) \\ &= \sup_{r\geq 0} \left( r \sup_{\mu_C(x)=1} \langle \xi,x \rangle - \frac{r^2}{2} \right) = \sup_{r\geq 0} \left( r \mu_{C^0}(\xi) - \frac{r^2}{2} \right) = \frac{\mu_{C^0}(\xi)^2}{2} = H_{C^0} (\xi).
\end{split} 
\]

We denote by $\widehat{\mathscr{C}}$ the subset of $\mathscr{C}$ consisting of those $C$ in $\mathscr{C}$ for which $H_C$ and $H_{C^0}$ are continuously differentiable and the maps 
\[
dH_C : \mathbb{H} \rightarrow \mathbb{H}^* \qquad \mbox{and} \qquad
dH_{C^0} : \mathbb{H}^*\rightarrow \mathbb{H}
\]
are globally Lipschitz continuous. These are regularity and strong convexity assumptions on $C$. Indeed, $dH_{C^0} = dH_C^*$ is $c$-Lipschitz if and only if $H_C$ is {\em $c$-strongly convex}, meaning that the function
\[
x\mapsto H_C(x) - \frac{1}{2c} \|x\|^2
\]
is convex (see e.g.\ \cite[Theorem 18.15]{bc11}).

Let $C\in \widehat{\mathscr{C}}$. Since $H_{C^0}$ is the Fenchel conjugate of $H_C$, the Legendre reciprocity formula gives us
\begin{equation}
\label{leg1}
dH_{C^0} = dH_C^{-1}.
\end{equation}
See e.g.\ \cite[Proposition II.1.15]{eke90}. Moreover,
\begin{equation}
\label{leg2}
H_{C^0} \circ dH_C = H_C \qquad \mbox{and} \qquad H_C \circ dH_{C^0} = H_{C^0}.
\end{equation}
Indeed, the Euler identity for the 2-homogeneous function $H_{C^0}$ gives 
\[
2 H_{C^0}(\xi) = \langle \xi, dH_{C^0}(\xi) \rangle \qquad \forall \xi \in \mathbb{H}^*,
\]
and by choosing $\xi= dH_C(x)$ with $x\in \mathbb{H}$, the identity (\ref{leg1}) and the Euler identity for the 2-homogeneous function $H_C$ imply
\[
2 H_{C^0}(dH_C(x)) = \langle dH_C(x), dH_{C^0}(dH_C(x)) \rangle = \langle dH_C(x),x \rangle = 2 H_C(x),
\]
proving the first identity of (\ref{leg2}). The second one follows by exchanging the role of $C$ and $C^0$, since the polar is involutive.

The aim of this section is to prove that the restriction of the capacity $c_{\mathbb{H}}$ to the set $\widehat{\mathscr{C}}$ is invariant with respect to homogeneous symplectomorphisms: 

\begin{thm}
\label{invariance}
Let $\psi: \mathbb{H}\setminus \{0\} \rightarrow \mathbb{H}\setminus \{0\}$ be a $C^1$ homogeneous symplectomorphism, continuously extended at the origin by setting $\psi(0):=0$,  such that the 0-homogeneous maps $d\psi$ and $d\psi^{-1}$ are bounded. Assume that $C$ and $\psi(C)$ belong to $\widehat{\mathscr{C}}$. Then
\begin{equation}
\label{capid}
c_{\mathbb{H}}(\psi(C)) = c_{\mathbb{H}}(C).
\end{equation}
\end{thm}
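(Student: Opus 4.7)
The plan is to exploit the variational formula from Proposition~\ref{cara} (with $p=2$):
\[
c_{\mathbb{H}}(C) = \frac{1}{2}\inf\left\{\int_{\T} H_{C^0}(\dot\xi)\, dt \;:\; \xi:\T\to\mathbb{H}^* \text{ absolutely continuous, } \mathbb{A}^*(\xi)=1\right\},
\]
and to show that $\psi$ converts a minimizing sequence for $C$ into an admissible sequence for $\psi(C)$ of asymptotically the same value. By symmetry (apply the same argument to $\psi^{-1}$, which is again a homogeneous symplectomorphism with bounded differential between sets of $\widehat{\mathscr{C}}$), this will give equality in (\ref{capid}).

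First I would produce a Palais--Smale minimizing sequence $(\xi_n)\subset H^1(\T,\mathbb{H}^*)$. The Lipschitz assumptions built into $\widehat{\mathscr{C}}$ make $\xi\mapsto\int H_{C^0}(\dot\xi)\,dt$ a $C^{1,1}$ functional on the smooth codimension-one submanifold $\{\mathbb{A}^*=1\}$, so Ekeland's variational principle yields $(\xi_n)$ with $\int H_{C^0}(\dot\xi_n)\,dt\to 2c_{\mathbb{H}}(C)$ and Lagrange multipliers $\lambda_n$ such that the weak Euler--Lagrange equation holds up to an $H^{-1}$-vanishing error:
\[
\frac{d}{dt}dH_{C^0}(\dot\xi_n) + \lambda_n\, \Omega^{-1}\dot\xi_n = o(1).
\]
Introducing $u_n(t):=dH_{C^0}(\dot\xi_n(t))\in\mathbb{H}$ and invoking the Legendre identities $dH_C\circ dH_{C^0}=\mathrm{Id}$ and $H_C\circ dH_{C^0}=H_{C^0}$ (so that $\dot\xi_n=dH_C(u_n)$ and $H_C(u_n)=H_{C^0}(\dot\xi_n)$), this becomes $\dot u_n=\lambda_n X_{H_C}(u_n)+o(1)$ with $H_C(u_n)$ almost constant in $t$. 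Thus $u_n$ is an ``almost closed characteristic'' on a level set close to $\partial C$, up to a rescaling by $\lambda_n$.

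Next I would transport via $\psi$. The homogeneity of $\psi$ yields $\mu_{\psi(C)}\circ\psi=\mu_C$ and hence $H_{\psi(C)}\circ\psi=H_C$; combined with the symplectic condition $d\psi^T\Omega\, d\psi=\Omega$ this gives the standard naturality $d\psi(x)\,X_{H_C}(x)=X_{H_{\psi(C)}}(\psi(x))$. Therefore $v_n:=\psi\circ u_n$ satisfies $\dot v_n=\lambda_n X_{H_{\psi(C)}}(v_n)+o(1)$, with the error controlled by the uniform bound on $d\psi$. I then define
\[
\eta_n(t):=\int_0^t dH_{\psi(C)}(v_n(s))\,ds + c_n,
\]
with $c_n\in\mathbb{H}^*$ constant. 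The closedness defect $\eta_n(1)-\eta_n(0)$ is $o(1)$ in view of the approximate Hamilton equation and the periodicity of $v_n$, and can be absorbed by an $o(1)$ correction of $\dot\eta_n$. The Legendre identities now give
\[
\int_{\T} H_{\psi(C)^0}(\dot\eta_n)\, dt = \int_{\T} H_{\psi(C)}(v_n)\, dt = \int_{\T} H_C(u_n)\, dt = \int_{\T} H_{C^0}(\dot\xi_n)\, dt \to 2c_{\mathbb{H}}(C),
\]
while a direct computation using $\mathbb{A}(v_n)=\mathbb{A}(u_n)$ (symplecticity of $\psi$) and the approximate Hamilton equations for $u_n$ and $v_n$ yields $\mathbb{A}^*(\eta_n)\to\mathbb{A}^*(\xi_n)=1$. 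After rescaling $\eta_n$ so that $\mathbb{A}^*(\eta_n)=1$ exactly, the sequence is admissible for $c_{\mathbb{H}}(\psi(C))$ with value tending to $c_{\mathbb{H}}(C)$, giving $c_{\mathbb{H}}(\psi(C))\le c_{\mathbb{H}}(C)$.

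The main obstacle is quantitative: one must show that the Lagrange multipliers $(\lambda_n)$ remain bounded away from $0$ and $\infty$ (so that the approximate Hamilton equation actually controls $u_n$ and $v_n$ in the right norms), and that each of the small corrections --- making $\eta_n$ genuinely closed, genuinely of action $1$, and properly normalized --- contributes negligibly to $\int H_{\psi(C)^0}(\dot\eta_n)\,dt$. The two-sided quadratic bound $\|\xi\|_*^2 \simeq H_{C^0}(\xi)$ coming from the assumption $C\in\widehat{\mathscr{C}}$, together with the uniform bounds on $d\psi$ and $d\psi^{-1}$, is precisely what is needed to close these estimates.
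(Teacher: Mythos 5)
Your proposal is correct and follows essentially the same route as the paper: the dual variational formula of Proposition \ref{cara}, a Palais--Smale minimizing sequence on $\{\mathbb{A}^*=1\}$, the reinterpretation via Legendre reciprocity as an ``almost closed characteristic'' $-\Omega\dot y_n=2c_{\mathbb{H}}(C)\,dH_C(y_n)+o_{L^2}(1)$ with $H_C(y_n)$ almost constant, and transport by $\psi$ using $H_{\psi(C)}\circ\psi=H_C$ together with the naturality of Hamiltonian vector fields; the quantitative points you flag (that $\lambda_n\to c_{\mathbb{H}}(C)>0$, proved by pairing the Euler--Lagrange relation with $\xi_n$ and using $2$-homogeneity, and the Lipschitz control of all error terms) are exactly the content of Lemmas \ref{lll1} and \ref{lll2}. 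The one place you diverge is the competitor for $\psi(C)$: the paper takes $\zeta_n:=-\Omega(\psi\circ y_n)$ directly, which is automatically closed and has action exactly $1$ because $\psi$ and $\Omega$ are symplectic, so the closedness and action-normalization corrections you need to absorb simply do not arise --- the price being that the value computation then carries an $o(1)$ error controlled by the Lipschitz bound on $dH_{\psi(C)^0}$, whereas in your version the value identity is exact but admissibility is only approximate.
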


Let $C$ and $\psi(C)$ be elements of $\widehat{\mathscr{C}}$.
In the proof of (\ref{capid}) we shall use the following formula for the symplectic capacity of $C$, which is a consequence of Proposition \ref{cara}:
\[
c_{\mathbb{H}}(C) = \inf \set{ \frac{1}{2} \int_{\T} H_{C^0}(\dot{\xi})\, dt}{\xi:\T \rightarrow \mathbb{H}^* \mbox{ absolutely continuous, } \mathbb{A}^*(\xi)=1}.
\]
In other words, $c_{\mathbb{H}}(C)$ is the infimum of the restriction of the functional
\[
\Phi_C : H^1(\T,\mathbb{H}^*) \rightarrow \R, \qquad \Phi_C(\xi) := \frac{1}{2} \int_{\T} H_{C^0}(\dot{\xi})\, dt,
\]
to the closed subset
\[
\mathbb{M} := \set{\xi\in H^1(\T,\mathbb{H}^*)}{\mathbb{A}^*(\xi)=1}
\]
of the Sobolev space $H^1(\T,\mathbb{H}^*)$ of absolutely continuous closed curves in $\mathbb{H}^*$ whose derivative has square integrable norm.
Since $H_{C^0}$ is continuously differentiable and $dH_{C^0}$ is globally Lipschitz, the functional $\Phi_C$ is continuously differentiable on $H^1(\T,\mathbb{H}^*)$, and its differential is
\begin{equation}
\label{difPhi}
d\Phi_C(\xi)[\eta] = \frac{1}{2} \int_{\T} \langle \dot{\eta}, dH_{C^0}(\dot{\xi}) \rangle \, dt,  \qquad \forall \xi,\eta\in H^1(\T,\mathbb{H}^*).
\end{equation}
On the other hand, 1 is a regular value of the smooth functional $\mathbb{A}^*: H^1(\T,\mathbb{H}^*) \rightarrow \R$, so $\mathbb{M}$ is a smooth submanifold of $H^1(\T,\mathbb{H}^*)$. It is a complete Riemannian manifold with the metric induced from $H^1(\T,\mathbb{H}^*)$. These facts allow us to find a sequence $(\xi_n)\subset \mathbb{M}$ such that
\begin{equation}
\label{mini}
\Phi_C(\xi_n) \rightarrow c_{\mathbb{H}}(C)
\end{equation}
and such that $(\xi_n)$ is a Palais-Smale sequence for $\Phi_C|_{\mathbb{M}}$, meaning that
\begin{equation}
\label{ps}
d(\Phi_C|_{\mathbb{M}})(\xi_n)  \rightarrow 0
\end{equation}
with respect to the induced metric on $T^* \mathbb{M}$. This follows from the standard deformation lemma (see e.g. \cite[Lemma I.3.2]{cha93}): If no sequence satisfying (\ref{mini}) and (\ref{ps}) exists, then, by using the negative gradient flow of $\Phi_C|_{\mathbb{M}}$, it is possible to deform the non-empty sublevel $\set{\xi\in \mathbb{M}}{\Phi_C(\xi)<c_{\mathbb{H}}(C) + \epsilon}$ into the empty sublevel $\set{\xi\in \mathbb{M}}{\Phi_C(\xi)<c_{\mathbb{H}}(C) - \epsilon}$ when $\epsilon>0$ is sufficiently small, which is a contradiction.

The next lemma says that, up to the addiction of suitable constants, the closed curves $-\Omega^{-1} \xi_n:\T \rightarrow \mathbb{H}$ are ``almost solutions'' of the Hamiltonian equation
\[
-\Omega \dot{x} = 2c_{\mathbb{H}}(C) dH_C(x).
\]

\begin{lem}
\label{lll1}
Assume that the sequence $(\xi_n)\subset \mathbb{M}$ satisfies (\ref{mini}) and (\ref{ps}). Then there exists a sequence $(\bar{y}_n)$ of constant loops in $\mathbb{H}$ such that the loops $y_n:= -\Omega^{-1} \xi_n + \bar{y}_n$ are uniformly bounded in $H^1(\T,\mathbb{H})$ and satisfy
\begin{equation}
\label{hs}
-\Omega \dot{y}_n = 2c_{\mathbb{H}}(C) dH_C(y_n) + \eta_n,
\end{equation}
where $(\eta_n)$ is infinitesimal in $L^2(\T,\mathbb{H}^*)$.
\end{lem}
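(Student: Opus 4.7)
The plan is to run a standard Lagrange multiplier argument on $\Phi_C|_{\mathbb{M}}$, exploiting that $\Phi_C$ is $C^1$ by the assumption $C\in\widehat{\mathscr{C}}$, and then convert the resulting near-criticality identity into the claimed Hamiltonian equation by integration by parts and Legendre reciprocity. Since both $\Phi_C$ and $\mathbb{A}^*$ depend only on $\dot\xi_n$, I may normalize $\int_\T \xi_n\,dt=0$ without affecting (\ref{mini}) or (\ref{ps}). The lower bound $H_{C^0}(\cdot) \ge (r^2/2)\|\cdot\|_*^2$ (from $rB\subset C$) together with (\ref{mini}) forces $\|\dot\xi_n\|_{L^2}$ to be bounded, and Poincar\'e then bounds $\|\xi_n\|_{H^1}$. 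Since $d\mathbb{A}^*(\xi_n)[\xi_n]=2\mathbb{A}^*(\xi_n)=2\ne 0$ by $2$-homogeneity, $\mathbb{M}$ is a smooth codimension-one submanifold near each $\xi_n$, so (\ref{ps}) yields multipliers $\lambda_n\in\R$ with $\|d\Phi_C(\xi_n)-\lambda_n d\mathbb{A}^*(\xi_n)\|_{(H^1)^*}\to 0$. Pairing against $\xi_n$ and using Euler's identity for $2$-homogeneous functions ($d\Phi_C(\xi_n)[\xi_n]=2\Phi_C(\xi_n)\to 2c_{\mathbb{H}}(C)$ and $d\mathbb{A}^*(\xi_n)[\xi_n]=2$) yields $\lambda_n\to c_{\mathbb{H}}(C)$.

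Using the linearity of $\Omega^{-1}$ and the periodicity of $\xi_n$, integration by parts on the $\mathbb{A}^*$-derivative gives
\[
d\mathbb{A}^*(\xi_n)[\eta]=\int_\T \langle \eta,\Omega^{-1}\dot\xi_n\rangle\,dt=-\int_\T \langle \dot\eta,\Omega^{-1}\xi_n\rangle\,dt,
\]
so combining with (\ref{difPhi}) the near-criticality relation becomes
\[
\int_\T \langle \dot\eta, g_n\rangle\,dt=o(1)\,\|\eta\|_{H^1},\qquad g_n:=\tfrac12 dH_{C^0}(\dot\xi_n)+\lambda_n\,\Omega^{-1}\xi_n\in L^2(\T,\mathbb{H}).
\]
As $\dot\eta$ runs over the zero-mean elements of $L^2(\T,\mathbb{H}^*)$ with $\|\dot\eta\|_{L^2}\lesssim\|\eta\|_{H^1}$, this is equivalent to $g_n-\bar g_n\to 0$ in $L^2(\T,\mathbb{H})$, where $\bar g_n:=\int_\T g_n\,dt\in\mathbb{H}$. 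Setting $\bar y_n:=\bar g_n/\lambda_n$ and $y_n:=-\Omega^{-1}\xi_n+\bar y_n$, this rewrites as
\[
dH_{C^0}(\dot\xi_n)=2\lambda_n\,y_n+r_n,\qquad r_n:=2(g_n-\bar g_n)\to 0\text{ in }L^2(\T,\mathbb{H}).
\]

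Applying the Legendre reciprocal $dH_C=(dH_{C^0})^{-1}$ pointwise and using that $dH_C$ is globally Lipschitz (again by $C\in\widehat{\mathscr{C}}$) and positively $1$-homogeneous,
\[
\dot\xi_n=dH_C(2\lambda_n y_n+r_n)=2\lambda_n\,dH_C(y_n)+\rho_n,\qquad \|\rho_n\|_{L^2}\le L\,\|r_n\|_{L^2}\to 0.
\]
Since $-\Omega\dot y_n=\dot\xi_n$, this yields $-\Omega\dot y_n=2c_{\mathbb{H}}(C)\,dH_C(y_n)+\eta_n$ with $\eta_n:=\rho_n+2(\lambda_n-c_{\mathbb{H}}(C))\,dH_C(y_n)$. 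Uniform $H^1$-control of $y_n$ follows from $\|\dot y_n\|_{L^2}=\|\dot\xi_n\|_{L^2}$ (compatibility of $(\cdot,\cdot)$ with $\omega$ makes $\Omega^{-1}$ an isometry) and from $\|\bar y_n\|\le\tfrac{1}{2|\lambda_n|}\|dH_{C^0}(\dot\xi_n)\|_{L^1}\lesssim\|\dot\xi_n\|_{L^2}$, where the normalization $\bar\xi_n=0$ is crucial to kill the $\lambda_n\Omega^{-1}\xi_n$ contribution to $\bar g_n$. The Lipschitz bound then forces $\|dH_C(y_n)\|_{L^2}$ to be bounded, so $\eta_n\to 0$ in $L^2(\T,\mathbb{H}^*)$. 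The main subtlety is coordinating the integration constant $\bar y_n$ with the Lagrange multiplier via the mean-zero normalization of $\xi_n$, and invoking the strong-convexity hypothesis $\widehat{\mathscr{C}}$ precisely where pointwise Legendre inversion must be upgraded to an $L^2$ estimate.
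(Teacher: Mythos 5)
Your proposal is correct and follows essentially the same route as the paper: Lagrange multipliers with the Euler identity giving $\lambda_n \to c_{\mathbb{H}}(C)$, a Du Bois--Reymond--type duality argument producing the constants $\bar y_n$, and Legendre reciprocity plus the Lipschitz $1$-homogeneity of $dH_C$ to obtain (\ref{hs}). The only cosmetic differences are your mean-zero normalization of $\xi_n$ and the quantitative $L^2$-duality step in place of the paper's explicit representation of the error functional $Z_n$ followed by Du Bois--Reymond.
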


\begin{proof}
Condition (\ref{ps}) can be rewritten as
\begin{equation}
\label{ps2}
d\Phi_C(\xi_n) = \lambda_n d\mathbb{A}^*(\xi_n) + Z_n,
\end{equation}
where $(\lambda_n)\subset \R$ and  $(Z_n)$ is an infinitesimal sequence in the dual space of $H^1(\T,\mathbb{H}^*)$. Since both $d\Phi_C(\xi_n)$ and $d\mathbb{A}^*(\xi_n)$ vanish on constant loops, so does $Z_n$. Therefore, $Z_n$ has the form
\begin{equation}
\label{zeta}
Z_n (\eta) = \int_{\T} \langle z_n,\dot{\eta}\rangle \, dt, \qquad \forall \eta\in H^1(\T,\mathbb{H}^*),
\end{equation}
where $(z_n)$ is an infinitesimal sequence in $L^2(\T,\mathbb{H})$. By (\ref{mini}), $(\dot{\xi}_n)$ is bounded in $L^2$. In particular, the sequence $(Z_n(\xi_n))$ is infinitesimal. Since both $\Phi_C$ and $\mathbb{A}^*$ are positively 2-homogeneous, (\ref{ps2}) and the Euler identity imply
\[
2 \Phi_C(\xi_n) = d\Phi_C(\xi_n)[\xi_n] = \lambda_n d\mathbb{A}^*(\xi_n)[\xi_n] + Z_n(\xi_n) = 2 \lambda_n \mathbb{A}^*(\xi_n) + o(1) = 2 \lambda_n + o(1),
\]
where $o(1)$ denotes an infinitesimal sequence. Together with (\ref{mini}), we deduce that
\begin{equation}
\label{lambda}
\lambda_n = \Phi_C(\xi_n) + o(1) = c_{\mathbb{H}}(C) + o(1).
\end{equation}
The differential of the functional $\mathbb{A}^*$ on $H^1(\T,\mathbb{H}^*)$ has the form
\begin{equation}
\label{difA}
d\mathbb{A}^*(\xi)[\eta] = \int_{\T} \langle \xi,\Omega^{-1} \dot{\eta} \rangle\, dt = - \int_{\T} \langle \dot{\eta},\Omega^{-1} \xi \rangle\, dt,  \qquad \forall \xi,\eta\in H^1(\T,\mathbb{H}^*).
\end{equation}
Formulas (\ref{difPhi}), (\ref{zeta}) and (\ref{difA}) allow us to rewrite (\ref{ps2}) as
\[
\int_{\T} \Bigl\langle \dot{\eta}, \frac{1}{2} dH_{C^0}(\dot{\xi}_n) + \lambda_n \Omega^{-1} \xi_n -z_n  \Bigr\rangle \, dt = 0, \qquad \forall \eta\in H^1(\T,\mathbb{H}).
\]
By the Du Bois-Reymond Lemma the above fact implies that for every $n\in \N$ the curve 
\[
\frac{1}{2} dH_{C^0}(\dot{\xi}_n) + \lambda_n \Omega^{-1} \xi_n -z_n
\]
is a.e.\ constant on $\T$. Therefore, there exists a sequence $(\bar{y}_n)$ of constant loops in $\mathbb{H}$ such that
\[
dH_{C^0}(\dot{\xi}_n) = 2 \lambda_n (- \Omega^{-1} \xi_n + \bar{y}_n)  + 2 z_n,
\]
for every $n\in \N$. By the above identity, the sequence $y_n:=-\Omega^{-1} \xi_n + \bar{y}_n $ is bounded in $L^2$. Since $\dot{y}_n=-\Omega^{-1}\dot{\xi}_n$ is also uniformly bounded in $L^2$, the sequence $(y_n)$ is bounded in $H^1(\T,\mathbb{H})$. Moreover,
the above identity can be rewritten by using (\ref{lambda}) as
\[
dH_{C^0}(-\Omega \dot{y}_n) = 2 c_{\mathbb{H}}(C) y_n + v_n,
\]
where $(v_n)$ is infinitesimal in $L^2(\T,\mathbb{H})$. By applying $dH_C$ to both sides we get by the Legendre reciprocity formula (\ref{leg1}):
\[
-\Omega \dot{y}_n = dH_C(2c_{\mathbb{H}}(C) y_n + v_n).
\]
Since $dH_C$ is positively 1-homogeneous and globally Lipschitz continuous, we deduce that
\[
-\Omega \dot{y}_n = 2c_{\mathbb{H}}(C) dH_C(y_n) + \eta_n,
\]
where $(\eta_n)$ is infinitesimal in $L^2(\T,\mathbb{H}^*)$.
\end{proof}

The next lemma says, in particular, that the Hamiltonian function $H_C$ is ``almost constant'' along the closed curve $y_n$:

\begin{lem}
\label{lll2}
Let $(y_n)$ be the sequence given by Lemma \ref{lll1}. Then the sequence of real functions $(H_C\circ y_n)$ converges uniformly to the constant function $1/(2c_{\mathbb{H}}(C))$. 
\end{lem}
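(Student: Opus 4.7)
The plan is to combine two facts: that $H_C$ is ``approximately conserved'' along $y_n$ because $y_n$ is almost a Hamiltonian trajectory, and that the average of $H_C \circ y_n$ can be read off from the convergence $\Phi_C(\xi_n) \to c_{\mathbb{H}}(C)$ via Legendre duality.

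First, I would control the oscillation of $H_C \circ y_n$. Since $y_n \in H^1(\T,\mathbb{H})$, the function $t \mapsto H_C(y_n(t))$ is absolutely continuous and
\[
\tfrac{d}{dt} H_C(y_n) = \langle dH_C(y_n), \dot{y}_n \rangle.
\]
Substituting $\dot{y}_n = -\Omega^{-1}(2 c_{\mathbb{H}}(C) dH_C(y_n) + \eta_n)$ from (\ref{hs}) and using that $\langle \xi, \Omega^{-1} \xi \rangle = \omega^*(\xi,\xi) = 0$ by skew-symmetry, the main term drops out and
\[
\tfrac{d}{dt} H_C(y_n) = -\omega^*(dH_C(y_n), \eta_n).
\]
Because $dH_C$ is positively $1$-homogeneous and Lipschitz with $dH_C(0)=0$, and $(y_n)$ is bounded in $H^1(\T,\mathbb{H}) \hookrightarrow C^0(\T,\mathbb{H})$, the sequence $(dH_C \circ y_n)$ is bounded in $L^\infty(\T,\mathbb{H}^*)$. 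Integrating and applying Cauchy--Schwarz then gives
\[
\sup_{t,s \in \T} |H_C(y_n(t)) - H_C(y_n(s))| \leq \|dH_C(y_n)\|_{L^2} \|\eta_n\|_{L^2} \to 0.
\]

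Next, I would identify the average of $H_C \circ y_n$. Since $\dot{\xi}_n = -\Omega \dot{y}_n$, the identity (\ref{hs}) reads $\dot{\xi}_n = 2c_{\mathbb{H}}(C)\, dH_C(y_n) + \eta_n$. The hypothesis $C \in \widehat{\mathscr{C}}$ makes $dH_{C^0}$ globally Lipschitz, so $H_{C^0}$ is of class $C^{1,1}$ and obeys the quadratic estimate
\[
\bigl| H_{C^0}(u+v) - H_{C^0}(u) - \langle v, dH_{C^0}(u)\rangle \bigr| \leq \tfrac{L}{2} \|v\|_*^2 .
\]
Applying this with $u = 2c_{\mathbb{H}}(C) dH_C(y_n)$ and $v = \eta_n$, using the Legendre identities (\ref{leg1})--(\ref{leg2}) together with the $2$-homogeneity of $H_{C^0}$ and $1$-homogeneity of $dH_C$, I get
\[
H_{C^0}\bigl(2c_{\mathbb{H}}(C) dH_C(y_n)\bigr) = 4 c_{\mathbb{H}}(C)^2 H_C(y_n),
\qquad
dH_{C^0}\bigl(2c_{\mathbb{H}}(C) dH_C(y_n)\bigr) = 2c_{\mathbb{H}}(C)\, y_n.
\]
Integrating and using that $\eta_n \to 0$ in $L^2$ while $(y_n)$ is bounded in $L^2$:
\[
2\Phi_C(\xi_n) \;=\; \int_{\T} H_{C^0}(\dot{\xi}_n)\, dt \;=\; 4 c_{\mathbb{H}}(C)^2 \int_{\T} H_C(y_n)\, dt \;+\; o(1).
\]
Combined with $\Phi_C(\xi_n) \to c_{\mathbb{H}}(C)$, this yields
\[
\int_{\T} H_C(y_n(t))\, dt \;\longrightarrow\; \frac{1}{2 c_{\mathbb{H}}(C)}.
\]

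Finally, the two steps combine: the uniform oscillation bound forces $H_C(y_n)$ to be uniformly close to its mean value, which converges to $1/(2 c_{\mathbb{H}}(C))$, proving the claim. I do not foresee a serious obstacle; the only point requiring care is the $C^{1,1}$ remainder estimate, which is exactly why the strong convexity built into $\widehat{\mathscr{C}}$ is imposed.
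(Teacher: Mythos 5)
Your proposal is correct and follows essentially the same route as the paper: differentiate $H_C\circ y_n$, use skew-symmetry of $\Omega$ to kill the main term and conclude that the oscillation is $o(1)$, then identify the mean value from $\Phi_C(\xi_n)\to c_{\mathbb{H}}(C)$ via the Legendre identities. The only (harmless) difference is that you expand $H_{C^0}(2c_{\mathbb{H}}(C)\,dH_C(y_n)+\eta_n)$ with a $C^{1,1}$ second-order Taylor estimate, whereas the paper uses a first-order mean-value bound exploiting the Lipschitz continuity of $dH_{C^0}$; both yield the same $o(1)$ error.
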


\begin{proof}
By differentiating $H_C \circ y_n$ we find by (\ref{hs})
\[
\frac{d}{dt} H_C\circ y_n = \langle dH_C(y_n),\dot{y}_n \rangle = \frac{1}{2c_{\mathbb{H}}(C)} \langle - \Omega \dot{y}_n - \eta_n, \dot{y}_n \rangle = -  \frac{1}{2c_{\mathbb{H}}(C)} \langle \eta_n, \dot{y}_n \rangle,
\]
because $\Omega$ is skew-symmetric. Since $(\dot{y}_n)$ is bounded in $L^2$ and $(\eta_n)$ is infinitesimal in $L^2$, the sequence 
\[
\left( \frac{d}{dt} H_C\circ y_n \right)
\]
is infinitesimal in $L^1$. Together with the fact that $(H_C\circ y_n)$ is bounded in $L^{\infty}$,  because so is $(y_n)$, we deduce that the sequence $(H_C\circ y_n)$ converges uniformly to a constant function $m$. We must show that $m=1/(2c_{\mathbb{H}}(C))$.

From (\ref{mini}) and (\ref{hs}) we deduce
\begin{equation}
\label{passi}
\begin{split}
c_{\mathbb{H}}(C) + o(1) &= \Phi_C(\xi_n) = \frac{1}{2} \int_{\T} H_{C^0}(\dot{\xi}_n)\, dt = \frac{1}{2} \int_{\T} H_{C^0}(-\Omega \dot{y}_n)\, dt \\ &= \frac{1}{2} \int_{\T} H_{C^0}(2c_{\mathbb{H}}(C) dH_C(y_n) + \eta_n)\, dt.
\end{split}
\end{equation}
Since $dH_{C^0}$ is globally Lipschitz and vanishes at the origin, we have the pointwise estimate
\[
\begin{split}
|H_{C^0}(2c_{\mathbb{H}}(C) dH_C(y_n) &+ \eta_n) - H_{C^0}(2c_{\mathbb{H}}(C) dH_C(y_n))| \\ &\leq \|\eta_n\| \sup_{\theta\in (0,1)} \|dH_{C^0}(2c_{\mathbb{H}}(C) dH_C(y_n) + \theta \eta_n)\|  \\ &\leq M \bigl( 2c_{\mathbb{H}}(C) \|dH_C(y_n)\| + \|\eta_n\|\bigr) \|\eta_n\|,
\end{split}
\]
where $M$ is the Lipschitz  constant of $dH_{C^0}$. Since $(dH_C(y_n))$ is bounded in $L^{\infty}$, because so is $(y_n)$, we obtain a pointwise estimate of the form
\[
|H_{C^0}(2c_{\mathbb{H}}(C) dH_C(y_n) + \eta_n) - H_{C^0}(2c_{\mathbb{H}}(C) dH_C(y_n))| \leq M' (1+\|\eta_n\|) \|\eta_n\|,
\]
for some constant $M'$. Since $(\eta_n)$ is infinitesimal in $L^2$, the above estimate implies that
\begin{equation}
\label{fine}
\begin{split}
\frac{1}{2} \int_{\T} H_{C^0}(2c_{\mathbb{H}}(C) dH_C(y_n) + \eta_n)\, dt &= \frac{1}{2} \int_{\T} H_{C^0}(2c_{\mathbb{H}}(C) dH_C(y_n))\, dt + o(1) \\ &= 2 c_{\mathbb{H}}(C)^2 \int_{\T} H_{C^0}( dH_C(y_n))\, dt + o(1) \\ &= 2 c_{\mathbb{H}}(C)^2 \int_{\T} H_C(y_n)\, dt + o(1),
\end{split}
\end{equation}
where we have used also the 2-homogeneity of $H_{C^0}$ and the first identity in (\ref{leg2}). Since $H_C\circ y_n$ converges uniformly to  the constant $m$, (\ref{passi}) and (\ref{fine}) imply
\[
c_{\mathbb{H}}(C) + o(1) = 2 c_{\mathbb{H}}(C)^2  \cdot m + o(1).
\]
The above identity implies that $m=1/(2c_{\mathbb{H}}(C))$.
\end{proof}

We can finally prove Theorem \ref{invariance}.

\begin{proof}[Proof of Theorem \ref{invariance}] Since $\psi$ is 1-homogeneous, the functions $H_C$ and $H_{\psi(C)}$ are related by the identity
\begin{equation}
\label{hom}
H_{\psi(C)} = H_C \circ \psi^{-1}.
\end{equation}
Differentiating this identity and evaluating at $\psi(y)$, $y\neq 0$, we obtain
\begin{equation}
\label{diff}
dH_{\psi(C)}(\psi(y))  = dH_C(y) \circ d\psi^{-1}(\psi(y)) = dH_C(y) \circ d\psi(y)^{-1}, \qquad \forall y\in \mathbb{H}\setminus \{0\}.
\end{equation}
The fact that $\psi$ is symplectic implies that
\[
d\psi(y)^* \, \Omega \, d\psi(y) = \Omega, \qquad \forall y\in \mathbb{H}\setminus \{0\},
\]
and hence 
\begin{equation}
\label{symp}
\Omega \, d\psi(y) = (d\psi(y)^*)^{-1} \, \Omega,
\qquad \forall y\in \mathbb{H}\setminus \{0\}.
\end{equation}
Set $z_n := \psi\circ y_n$ and $\zeta_n := -\Omega z_n$. Notice that by Lemma \ref{lll2} the closed curves $y_n$ and $z_n$ do not pass from the origin when $n$ is large enough. Since $-\Omega$ and $\psi$ are symplectic,
\[
\mathbb{A}^*(\zeta_n) =
\mathbb{A}(z_n) = \mathbb{A}(y_n) = \mathbb{A}^*(-\Omega y_n) = \mathbb{A}^*(\xi_n) = 1,
\]
so $\zeta_n$ belongs to $\mathbb{M}$.
By differentiating $z_n$ we find, by (\ref{symp}) and (\ref{hs}) and by the fact that $d\psi^{-1}$ is uniformly bounded on $\mathbb{H} \setminus \{0\}$
\[
\begin{split}
-\Omega \, \dot{z}_n &= -\Omega\, d\psi(y_n)[\dot{y}_n] = -(d\psi(y_n)^*)^{-1} [\Omega \, \dot{y}_n) ] = (d\psi(y_n)^*)^{-1} [2c_{\mathbb{H}}(C) dH_C(y_n) + \eta_n] \\ &= 2c_{\mathbb{H}}(C) (d\psi(y_n)^*)^{-1} [dH_C(y_n)] + \theta_n,
\end{split}
\]
where $(\theta_n)$ is infinitesimal in $L^2$. From the tautological identity
\[
(d\psi(y)^*)^{-1} [dH_C(y)] = dH_C(y) \circ d\psi(y)^{-1}, \qquad \forall y\in \mathbb{H} \setminus \{0\},
\]
and from (\ref{diff}) we conclude that
\[
-\Omega \, \dot{z}_n = 2c_{\mathbb{H}}(C) dH_{\psi(C)}(z_n) + \theta_n.
\]
Next we compute
\[
\begin{split}
\Phi_{\psi(C)}(\zeta_n) &= \frac{1}{2} \int_{\T} H_{\psi(C)^{\circ}}(\dot{\zeta}_n)\, dt = \frac{1}{2} \int_{\T} H_{\psi(C)^{\circ}}(-\Omega \dot{z}_n)\, dt \\ &=  \frac{1}{2} \int_{\T} H_{\psi(C)^{\circ}}(2c_{\mathbb{H}}(C) dH_{\psi(C)}(z_n) + \theta_n)\, dt.
\end{split}
\]
Arguing as in the proof of Lemma \ref{lll2}, using the fact that $(\theta_n)$ is infinitesimal in $L^2$, that $(dH_{\psi(C)}(z_n))$ is bounded in $L^{\infty}$ and that $dH_{\psi(C)^0}$ is globally Lipschitz, we deduce that the above integral differs from
\[
\frac{1}{2} \int_{\T} H_{\psi(C)^{\circ}}(2c_{\mathbb{H}}(C) dH_{\psi(C)}(z_n))\, dt
\]
by an infinitesimal sequence. Therefore, using also the 2-homogeneity of $H_{\psi(C)^{\circ}}$ and the first identity in (\ref{leg2}), we obtain
\[
\Phi_{\psi(C)}(\zeta_n) = 2 c_{\mathbb{H}}(C)^2 \int_{\T} H_{\psi(C)^{\circ}}(dH_{\psi(C)}(z_n))\, dt + o(1) = 2 c_{\mathbb{H}}(C)^2 \int_{\T} H_{\psi(C)}(z_n)\, dt + o(1).
\]
From (\ref{hom}) we deduce that
\[
\Phi_{\psi(C)}(\zeta_n) = 2 c_{\mathbb{H}}(C)^2 \int_{\T} H_{C}(y_n)\, dt + o(1),
\]
and by Lemma \ref{lll2} the last integral converges to $1/(2c_{\mathbb{H}}(C))$. We conclude that
\[
\Phi_{\psi(C)}(\zeta_n) = c_{\mathbb{H}}(C) + o(1),
\]
which implies that
\[
c_{\mathbb{H}}(\psi(C)) = \inf_{\xi\in \mathbb{M}} \Phi_{\psi(C)} (\xi) \leq c_{\mathbb{H}}(C).
\]
By applying what we have just proved to the convex set $\psi(C)$ and to the homogeneous symplectomorphism $\psi^{-1}$ we obtain the opposite inequality. Therefore,
\[
c_{\mathbb{H}}(\psi(C)) =  c_{\mathbb{H}}(C).
\]
\end{proof}

\section{More facts about homogeneous symplectomorphisms}
\label{gensymp}

Smooth homogeneous symplectomorphisms have the following simple characterization in terms of the Liouville form $\lambda$:

\begin{lem}
Let $\varphi:\mathbb{H}\setminus \{0\} \rightarrow \mathbb{H}\setminus \{0\}$ be a smooth diffeomorphism. Then the following statements are equivalent:
\begin{enumerate}[(i)]
\item $\varphi$ is symplectic and positively 1-homogeneous;
\item $\varphi^* \lambda = \lambda$ on $\mathbb{H}\setminus \{0\}$.
\end{enumerate}
\end{lem}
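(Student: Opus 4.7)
The plan is to verify each implication by unpacking the definition $\lambda = \iota_Y \omega$ with $Y(x) = x/2$, and using the fact that the Liouville vector field is precisely the infinitesimal generator of the radial dilation flow $\phi_t(x) = e^{t/2} x$.

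For the forward direction (i) $\Rightarrow$ (ii), I would first differentiate the homogeneity identity $\varphi(tx) = t\varphi(x)$ in $t$ at $t=1$ to get $d\varphi(x)[x] = \varphi(x)$, equivalently $d\varphi(x)[Y(x)] = Y(\varphi(x))$, so $Y$ is $\varphi$-related to itself. Then for every $x \neq 0$ and $u \in \mathbb{H}$,
\[
(\varphi^*\lambda)(x)[u] = \lambda(\varphi(x))[d\varphi(x)u] = \omega(Y(\varphi(x)), d\varphi(x)u) = \omega(d\varphi(x)[Y(x)], d\varphi(x)u),
\]
and using $\varphi^*\omega = \omega$ this last expression equals $\omega(Y(x),u) = \lambda(x)[u]$.

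For the reverse direction (ii) $\Rightarrow$ (i), taking the exterior derivative of $\varphi^*\lambda = \lambda$ immediately yields $\varphi^*\omega = \omega$, so $\varphi$ is symplectic. To recover homogeneity, I use non-degeneracy of $\omega$: the identity $\iota_Y \omega = \lambda$ characterizes $Y$ uniquely, and the pair of identities $\varphi^*\lambda = \lambda$ and $\varphi^*\omega = \omega$ forces $Y$ to be $\varphi$-invariant, i.e.\ $d\varphi(x)[Y(x)] = Y(\varphi(x))$, which is the same as $d\varphi(x)[x] = \varphi(x)$. The cleanest way to conclude is then to observe that the curve $t \mapsto \varphi(tx)/t$ on $(0,+\infty)$ has derivative
\[
\frac{d}{dt}\Bigl(\frac{\varphi(tx)}{t}\Bigr) = \frac{d\varphi(tx)[x]}{t} - \frac{\varphi(tx)}{t^2} = \frac{d\varphi(tx)[tx] - \varphi(tx)}{t^2} = 0,
\]
so $\varphi(tx)/t$ is constant in $t > 0$ and equals $\varphi(x)$ at $t=1$, giving $\varphi(tx) = t\varphi(x)$.

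There is no real obstacle here; both implications are short once one recognizes that the content of the statement is that the Liouville vector field is $\varphi$-related to itself, which translates between the infinitesimal (homogeneity) and the symplectic-dual (invariance of $\lambda$) pictures via the non-degeneracy of $\omega$. The only mild care needed is to work on $\mathbb{H}\setminus\{0\}$ throughout, since the radial flow is not defined at the origin in a way compatible with the statement, and to use that $\varphi$ is assumed to map $\mathbb{H}\setminus\{0\}$ to itself so that the computations above make sense.
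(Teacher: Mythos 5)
Your proof is correct and follows essentially the same route as the paper: both directions hinge on translating positive $1$-homogeneity into the statement that the Liouville vector field $Y$ is $\varphi$-related to itself, combined with the naturality $\imath_{\varphi^*Y}\varphi^*\omega = \varphi^*(\imath_Y\omega)$ and the non-degeneracy of $\omega$. Your forward direction is in fact slightly more streamlined than the paper's, which first only records that $\varphi^*Y = fY$ for a positively $0$-homogeneous $f$ and then shows $f\equiv 1$ by differentiating $\varphi^*\lambda = f\lambda$ and contracting with $Y$, whereas you obtain $d\varphi(x)[x]=\varphi(x)$ (hence $f=1$) at once from Euler's identity.
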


\begin{proof}
The diffeomorphism $\varphi$ maps rays through the origin into rays through the origin if and only if $\varphi^* Y = f Y$, where $Y(x)=x/2$ is the Liouville vector field and $f$ is a non-vanishing scalar smooth function. Since the differential of a 1-homogeneous map is 0-homogeneous, we deduce that $\varphi$ is positively 1-homogeneous if and only if $\varphi^* Y = f Y$ with $f$ positively 0-homogeneous. 

Assume that $\varphi$ satisfies (ii). Then
\[
\varphi^* \omega = \varphi^* d\lambda = d \varphi^* \lambda = d \lambda = \omega,
\]
so $\varphi$ is symplectic. Moreover, the identity
\[
\imath_{\varphi^* Y} \omega = \imath_{\varphi^* Y} \varphi^* \omega = \varphi^* \imath_Y \omega = \varphi^* \lambda = \lambda = \imath_Y \omega
\]
and the non-degeneracy of $\omega$ imply that $\varphi^* Y = Y$. In particular, $\varphi$ is positively 1-homogeneous. We conclude that $\varphi$ satisfies (i).

Next assume that $\varphi$ satisfies (i) and let $f$ be the positively 0-homogeneous function such that $\varphi^*Y = fY$. Then
\[
\varphi^* \lambda = \varphi^* \imath_Y \omega = \imath_{\varphi^* Y} \varphi^* \omega = \imath_{fY} \omega = f \imath_Y \omega = f \lambda.
\]
Differentiating this identity and using again the fact that $\varphi$ is symplectic, we get
\[
\omega = d( f\lambda) = df\wedge \lambda + f \omega.
\]
Contraction along $Y$ gives
\[
\lambda = \imath_Y (df\wedge \lambda + f \omega) = f \imath_Y \omega = f \lambda.
\]
because $df(Y)=0$ by 0-homogeneity and $\lambda(Y)=\omega(Y,Y)=0$. Therefore, $f=1$ and $\varphi$ satisfies (ii).
\end{proof}

In order to construct homogeneous symplectomorphisms, it is useful to extend also the basic objects of contact geometry to our infinite dimensional setting.

A smooth hypersurface $S$ in $\mathbb{H}$ is said to be of {\em contact type} if it admits a nowhere vanishing smooth 1-form $\alpha$ such that $d\alpha = \omega|_S$ and $\omega$ restricts to a symplectic form on the 2-codimensional subspace $\ker \alpha(x)$, for every $x\in S$. Such a 1-form $\alpha$ is called a {\em contact form} on $S$. It induces a tangent vector field $R_{\alpha}$ on $S$, which is called the {\em Reeb vector field} of $\alpha$ and is defined by the identities
\[
\imath_{R_{\alpha}} d\alpha = 0, \qquad \alpha(R_{\alpha}) = 1.
\]
Therefore, $R_{\alpha}$ is a nowhere vanishing section of the characteristic distribution $\mathscr{D}_S$ of $S$. These are symplectically invariant concepts: It is immediate to check that, if $\varphi$ is a smooth symplectomorphism between open subsets of $\mathbb{H}$, then the 1-form $\varphi^*\alpha$ is a contact form on $\varphi^{-1}(S)$ satisfying $d(\varphi^* \alpha) = \omega|_{\varphi^{-1}(S)}$, and the corresponding Reeb vector field is
\[
R_{\varphi^* \alpha} = \varphi^*(R_{\alpha}).
\]

Let $C$ be an element of $\mathscr{C}$ with smooth boundary. Then $\partial C$ is of contact type with respect to the restriction of the Liouville 1-form to $\partial C$, that is
\[
\alpha := \lambda|_{\partial C}.
\]
Indeed, $d\alpha = d\lambda|_{\partial C} = \omega|_C$, and we have to check that $\omega$ restricts to a symplectic form on $\ker \alpha(x)$, for every $x\in \partial C$.
By (\ref{kerlambda}) there holds
\[
\ker \alpha (x) = T_x \partial C \cap \ker \lambda(x) = \bigl( \R n_{C}(x) \bigr)^{\perp} \cap J \bigl(( \R x )^{\perp}\bigr),
\]
where $n_C(x)$ denotes the unit exterior normal vector to $\partial C$ at $x$. The orthogonal complement of the above space is
\[
\bigl( \ker \alpha (x) \bigr)^{\perp} = \R n_C (x) + (J^T)^{-1} \R x =   \R n_C (x) + J \R x,
\]
and $\omega$ is non-degenerate on this two-dimensional plane because
\[
\omega(n_C(x),Jx) = (J n_C(x),Jx) = (n_C(x),x)>0,
\]
since $C$ is a convex neighborhood of the origin (see (\ref{normale})). By Lemma \ref{fincodim}, $\omega$ is non-degenerate on $\ker \alpha (x)$.

A direct computation shows that the Reeb vector field of $\alpha=\lambda|_{\partial C}$ is
\begin{equation}
\label{reeb}
R_{\alpha}(x) = \frac{2}{(n_C(x),x)} J n_C (x) \qquad \forall x\in \partial C.
\end{equation}
In particular, $R_{\alpha}$ is a positively oriented section of the characteristic distribution of $\partial C$. 

The proof of the following result uses Moser's argument from \cite{mos65}.

\begin{thm}
\label{possohom}
Let $C$ be an element of $\mathscr{C}$ with smooth boundary. Let
\[
\varphi: C \rightarrow \varphi(C) \subset \mathbb{H}
\]
be a smooth symplectomorphism such that the maps $d\varphi$ and $(d\varphi)^{-1}$ are bounded on $C$. Then $\varphi(C)$ is the image  of $C$ by a homogeneous symplectomorphism $\psi$ which is smooth on $\mathbb{H}\setminus \{0\}$. If moreover the third differential of $\varphi$ is bounded on $C$, then the -1-homogeneous maps $d^2 \psi$ and $d^2 \psi^{-1}$ are bounded on the complement of every neighborhood of the origin.
\end{thm}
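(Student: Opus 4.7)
The plan is to reduce the construction of $\psi$ to producing a diffeomorphism on $\partial C$ and then to invoke Moser's trick. By the characterization of positively 1-homogeneous symplectomorphisms given at the start of this section, it is enough to find a diffeomorphism $\chi : \partial C \to \partial \varphi(C)$ with $\chi^*(\lambda|_{\partial \varphi(C)}) = \lambda|_{\partial C}$ and to define
\[
\psi(t x_0) := t\, \chi(x_0), \qquad x_0 \in \partial C,\; t \geq 0.
\]
The representation $x = \mu_C(x)\cdot (x/\mu_C(x))$ makes $\psi$ well defined and smooth on $\mathbb{H}\setminus\{0\}$, with $\psi(C)=\varphi(C)$. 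A direct computation at a point $x = tx_0$ with tangent vector $\tau x_0 + tu$, $u\in T_{x_0}\partial C$, using the skew-symmetry $(J\chi(x_0),\chi(x_0))=0$, yields $\psi^*\lambda = \lambda$ pointwise, so $\psi$ is symplectic by the lemma above.

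To produce $\chi$, I would apply Moser's trick as in \cite{mos65}. Since $C$ is convex and $\varphi$ is symplectic, $\varphi^*\lambda - \lambda$ is closed on $C$, hence exact: write $\varphi^*\lambda - \lambda = dh$. Interpolate on $\partial C$ by $\alpha_t := \lambda|_{\partial C} + t\, dh|_{\partial C}$, so that $\alpha_0=\lambda|_{\partial C}$, $\alpha_1 = (\varphi^*\lambda)|_{\partial C}$, and $d\alpha_t = \omega|_{\partial C}$ throughout. A family of diffeomorphisms $\beta_t : \partial C \to \partial C$ with $\beta_t^* \alpha_t = \alpha_0$ is produced by solving $\dot{\beta}_t = X_t\circ \beta_t$ for a time-dependent vector field $X_t$ satisfying $L_{X_t}\alpha_t = -dh|_{\partial C}$. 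The ansatz $X_t := -(h|_{\partial C})\, R_t$, where $R_t$ is the Reeb field of $\alpha_t$, gives
\[
L_{X_t}\alpha_t = \imath_{X_t} d\alpha_t + d(\alpha_t(X_t)) = 0 + d(-h|_{\partial C}) = -dh|_{\partial C},
\]
as required, and the desired map is $\chi := \varphi \circ \beta_1$, since $\chi^*(\lambda|_{\partial\varphi(C)}) = \beta_1^* \alpha_1 = \alpha_0 = \lambda|_{\partial C}$.

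The hard part will be ensuring that $R_t$ is well defined throughout $[0,1]$: the characteristic line of $\partial C$ at $x$ is $\mathbb{R}\, J n_C(x)$, so $R_t = J n_C / \alpha_t(J n_C)$, and one needs $\alpha_t(J n_C)\neq 0$ uniformly. Using the lower bound $(x,n_C(x)) \geq r > 0$ from (\ref{normale}) together with the bound on $\|dh\|$ coming from the hypothesis on $d\varphi$, we obtain $\alpha_t(J n_C) \geq r/2 - tM$ for a constant $M$; if this fails to stay positive on the whole of $[0,1]$, one partitions $[0,1]$ into subintervals of length below $r/(2M)$, performs Moser on each, and composes the resulting diffeomorphisms. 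Global solvability of each Moser ODE on $\partial C$ follows from the uniform boundedness of $X_t$ by standard Hilbert-manifold arguments.

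For the higher-regularity statement, bounded $d^3\varphi$ yields bounded $d^2 h$, hence bounded $d^2 X_t$, and Gronwall estimates on the variational equation for $\beta_t$ give a bound on $d^2 \beta_1$. Composing with $\varphi$ controls $d^2 \chi$, and since $\psi$ is positively 1-homogeneous on $\mathbb{H}\setminus\{0\}$, the map $d^2\psi$ is positively $(-1)$-homogeneous, hence bounded on the complement of any neighbourhood of $0$. The analogous bound on $d^2 \psi^{-1}$ follows by applying the same construction to $\varphi^{-1}$. I expect the principal bookkeeping challenge to be propagating the second-derivative bounds through the iterative Moser construction while keeping the contact condition uniformly satisfied on each subinterval.
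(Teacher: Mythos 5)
Your strategy coincides with the paper's: interpolate $\alpha_t=\lambda|_{\partial C}+t\,dh$, run Moser's trick along the Reeb direction, compose with $\varphi$, and extend $1$-homogeneously; your ansatz $X_t=-h\,R_t$ with $R_t$ the Reeb field of $\alpha_t$ is exactly the paper's vector field $-\frac{h}{1+t\,dh(R)}\,R$, since $R_t=R/(1+t\,dh(R))$ where $R=R_{\lambda|_{\partial C}}$. The gap is precisely at the step you single out as ``the hard part'': you must show that $\alpha_t$ is nonvanishing on the characteristic line, i.e.\ that $1+t\,dh(R)$ is uniformly positive for all $t\in[0,1]$, and neither your estimate nor your fallback delivers this. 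The bound $\alpha_t(Jn_C)\geq r/2-tM$ is vacuous unless $M<r/2$, which the hypotheses do not give; and partitioning $[0,1]$ into short subintervals does not help, because the path $t\mapsto\alpha_t$ is fixed by the linear interpolation: if $\alpha_{t_0}$ annihilated the characteristic direction at some point for some $t_0\in(0,1)$, every subdivision of $[0,1]$ would still contain $t_0$, and the Moser vector field would still blow up there. Subdivision is a remedy only when one is free to replace the path of forms by a different one, which you are not.

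The missing idea is that nondegeneracy at $t=1$ comes for free from orientations and then propagates to all $t\in[0,1]$ by linearity in $t$. Since $\varphi$ maps the interior of $C$ into the interior of $\varphi(C)$, one has $d\varphi(x)^{-1}Jn_{\varphi(C)}(\varphi(x))=J\,d\varphi(x)^{T}n_{\varphi(C)}(\varphi(x))=g(x)\,Jn_C(x)$ with $g>0$, so the pulled-back Reeb field $\varphi^*\bigl(R_{\lambda|_{\partial\varphi(C)}}\bigr)$, which is the Reeb field of $\alpha_1=(\lambda+dh)|_{\partial C}$, is a \emph{positively oriented} section of the characteristic distribution; writing it as $fR$ with $f>0$ and applying $\alpha_1$ gives $1=f\bigl(1+dh(R)\bigr)$, hence $1+dh(R)>0$ pointwise, and in fact $\inf dh(R)>-1$ because $f$ is bounded (this uses the boundedness of $(d\varphi)^{-1}$ together with the normal estimate (\ref{normale}) applied to $\varphi(C)$, via the explicit formula (\ref{reeb}) for the Reeb field). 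Consequently $1+t\,dh(R)\geq\min\{1,\,1+dh(R)\}$ is uniformly positive on $[0,1]$, $X_t$ is bounded, and the remainder of your argument --- the $1$-homogeneous extension, the verification that $\psi^*\lambda=\lambda$, and the second-derivative bookkeeping --- goes through as written, with no iteration needed.
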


\begin{proof}
Since $\varphi$ is symplectic, the one-form $\varphi^* \lambda - \lambda$ is closed and hence exact, because $C$ is simply connected: There exists a smooth function $h: C \rightarrow \R$ such that 
\[
\varphi^* \lambda - \lambda = d h.
\]
Since the sets $C$ and $\varphi(C)$ and the map $d\varphi$ are bounded, $dh$ is bounded on $C$. It follows that $h$ is also bounded, because $C$ is bounded and convex. Since $\varphi$ is symplectic, the pull-back of $\lambda|_{\partial \varphi(C)}$ to $\partial C$, that is the one-form
\[
\varphi|_{\partial C}^*( \lambda|_{\partial \varphi(C)} ) = (\lambda + dh)|_{\partial C},
\]
is a contact form on $\partial C$ with differential $\omega|_{\partial C}$. Moreover, the Reeb vector field of this contact form on $\partial C$ is 
\[
\varphi^*(R_{\lambda|_{\partial \varphi(C)}}) = R_{(\lambda+dh)|_{\partial C}}.
\]
Since this vector field is another non-vanishing section of the characteristic distribution of $\partial C$, we have
\begin{equation}
\label{ugu}
R_{(\lambda+dh)|_{\partial C}} = f R_{\lambda|_{\partial C}},
\end{equation}
where $f$ is a nowhere vanishing smooth function on $\partial C$. The fact that $\varphi$ maps the interior of $C$ into the interior of $\varphi(C)$ implies that $\varphi^*(R_{\lambda|_{\partial \varphi(C)}})$ is positively oriented: Indeed, for every $x\in \partial C$ we have
\[
\varphi^*(J n_{\varphi(C)}) (x) =  d\varphi(x)^{-1} J n_{\varphi(C)}(\varphi(x)) = J d\varphi(x)^T n_{\varphi(C)}(\varphi(x)) = g(x) J n_C(x),
\]
for some positive function $g$. Since $R_{\lambda|_{\partial C}}$ is also positively oriented, we conclude that the function $f$ is everywhere positive. 

By formula (\ref{reeb}), the vector fields $R_{\lambda|_{\partial C}}$ and $R_{\lambda|_{\partial \varphi(C)}}$ are bounded and bounded away from zero. Using also the fact that the map $(d\varphi)^{-1}$ is bounded, we obtain that the vector field
\[
R_{(\lambda+dh)|_{\partial C}} = \varphi^*(R_{\lambda|_{\partial \varphi(C)}})
\]
is also bounded and bounded away from zero. Therefore, the positive function $f$ is bounded and bounded away from zero. By applying the 1-form $\lambda+dh$ to (\ref{ugu}), we obtain
\[
1= f \bigr(1+ dh(R_{\lambda|_{\partial C}}) \bigr).
\]
Therefore,
\[
f = \frac{1}{1+dh(R_{\lambda|_{\partial C}})},
\]
and the function $dh(R_{\lambda|_{\partial C}})$ satisfies
\[
-1 < \inf dh(R_{\lambda|_{\partial C}}) \leq \sup dh(R_{\lambda|_{\partial C}}) < +\infty.
\]
We simplify the notation by setting $\alpha:= \lambda|_{\partial C}$ and $R:= R_{\alpha}$. Together with the boundedness of $h$ and $R$, the above bounds imply that the smooth time-dependent tangent vector field
\[
X_t : C \rightarrow TC, \qquad
X_t := - \frac{h|_{\partial C}}{1+t \, dh(R)} R,
\]
is bounded. Since the Hilbert manifold $\partial C$ is complete with respect to the Riemannian structure which is induced by the Hilbert product of $\mathbb{H}$, the flow 
$\{\eta_t:\partial C \rightarrow \partial C\}_{t\in [0,1]}$ of $X_t$, i.e. the solution of
\[
\partial_t \eta_t = X_t(\eta_t), \qquad \eta_0 = \mathrm{id},
\]
is defined for every $t\in [0,1]$. From the identity
\[
(\alpha + t d h )(X_t) = - \frac{h}{1 + t \, dh(R)} - \frac{t\, h}{1 + t \, dh(R)} dh(R) = - h
\]
we find by Cartan's identity
\[
\begin{split}
\frac{d}{dt} \eta_t^* (\alpha + t d h) &= \eta_t^* \bigl( L_{X_t} (\alpha + t d h) + dh \bigr) = \eta_t^* \bigl( \imath_{X_t} d (\alpha + t dh)+ d \bigl((\alpha + t d h)(X_t)\bigr) + d h \bigr) \\ &= \eta^*_t (0 - dh + dh) = 0,
\end{split}
\]
where we have used the fact that $\imath_{X_t} d\alpha =0$, since $X_t$ is parallel to $R$. Together with the condition $\eta_0^* \alpha = \alpha$, the above identity implies that
\[
\eta_t^* (\alpha + t \, dh ) = \alpha
\]
for every $t\in [0,1]$. Then the smooth diffeomorphism 
\[
\tilde{\psi} : \partial C \rightarrow \partial \varphi(C) = \varphi(\partial C), \qquad \tilde{\psi} := \varphi\circ \eta_1,
\]
satisfies
\[
\tilde{\psi}^* ( \lambda|_{\partial \varphi(C)} ) = \eta_1^* ( \varphi^* ( \lambda|_{\partial \varphi(C)} )) = \eta_1^* ( \alpha + dh) = \alpha = \lambda|_{\partial C}.
\]
We claim that the one-homogeneous extension of $\tilde{\psi}$, that is the map
\[
\psi: \mathbb{H} \setminus \{0\} \rightarrow \mathbb{H} \setminus \{0\}, \qquad \psi(rx) = r \tilde{\psi}(x)\quad \forall x\in \partial C, \; \forall r>0,
\]
satisfies $\psi^* \lambda = \lambda$. Indeed, let $x\in \partial C$ and decompose the vector $u\in \mathbb{H}$ as
\[
u = v + \rho Y(x),
\]
where $v\in T_x \partial C$ and $\rho\in \R$. Notice that if $x\in \partial C$ then
\[
d\psi(x) u = d\tilde{\psi}(x) v + \sigma Y(\psi(x)),
\]
for some $\sigma\in \R$, because $\psi$ maps the ray $\R^+ x$ into the ray $\R^+ \psi(x)$.
Then, using the 1-homogeneity of $\lambda$ and the 0-homogeneity of $d\psi$, we have
\[
\begin{split}
(\psi^* \lambda)(rx)&[u] = \lambda(\psi(rx))[ d\psi(rx) u ] = \lambda(r\tilde{\psi}(x))[d\psi(x)u] = r \lambda(\tilde{\psi}(x))[d\tilde{\psi}(x) v + \sigma Y(\psi(x))] \\ & = r \lambda(\tilde{\psi}(x))[d\tilde{\psi}(x) v] = r (\tilde{\psi}^* \lambda)(x)[v] = r \lambda(x)[v] = r \lambda(x)[v+\rho Y(x)] = \lambda(rx)[u].
\end{split}
\]
Therefore, $\psi^* \lambda=\lambda$ and $\psi$ is the required homogeneous symplectomorphism mapping $C$ onto $\varphi(C)$.

If the third differential of $\varphi$ is bounded on $C$, then $h$ has a bounded third differential. It follows that the vector field $X$ has bounded first and second differentials, and hence its time-one map $\eta_1$ and its inverse $\eta_1^{-1}$ have bounded second differentials. By composition the same is true for the maps $\tilde{\psi}$ and $\tilde{\psi}^{-1}$. We conclude that their 1-homogeneous extensions $\psi$ and $\psi^{-1}$ have bounded second differential on $\partial C$ and $\partial \varphi(C)$. Being homogeneous maps of degree -1, $d^2 \psi$ and $d^2 \psi^{-1}$ are bounded on the complement of every neighborhood of the origin.
\end{proof}

\section{The invariance property (vi) and the proof of Theorems \ref{uno} and \ref{due}}
\label{geninv}

We can finally proof the invariance of the symplectic capacity $c_{\mathbb{H}}$ for a class of not necessarily homogeneous symplectomorphisms. This is the precise statement of property (vi) from the Introduction. 

\begin{thm}
\label{invariance2}
Assume that $C\in \widehat{\mathscr{C}}$ has a smooth boundary. Let $\varphi: C \rightarrow \varphi(C) \subset \mathbb{H}$ be a smooth symplectomorphism such that the differentials up to the third order of $\varphi$ and $\varphi^{-1}$ are bounded. If $\varphi(C)$ is a convex neighborhood of the origin then
\[
c_{\mathbb{H}}(\varphi(C)) =  c_{\mathbb{H}}(C).
\]
\end{thm}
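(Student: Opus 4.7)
The plan is to reduce the general situation to the homogeneous case handled by Theorem \ref{invariance}, via Theorem \ref{possohom}, and then to remove the remaining strong convexity assumption on $\varphi(C)$ by a Moser-type perturbation together with the continuity property (v) of $c_{\mathbb{H}}$.

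First I would apply Theorem \ref{possohom} to the given symplectomorphism: this produces a homogeneous symplectomorphism $\psi:\mathbb{H}\setminus\{0\}\to\mathbb{H}\setminus\{0\}$, smooth away from $0$, with $\psi(C)=\varphi(C)$. The boundedness of the third differentials of $\varphi$ and $\varphi^{-1}$ guarantees, by the second assertion of Theorem \ref{possohom}, that $d\psi$, $d\psi^{-1}$, $d^2\psi$ and $d^2\psi^{-1}$ are bounded on the complement of every neighborhood of the origin, which is exactly the regularity needed in Theorem \ref{invariance}. Hence if in addition $\psi(C)=\varphi(C)$ happened to belong to $\widehat{\mathscr{C}}$, Theorem \ref{invariance} would directly give
\[
c_{\mathbb{H}}(\varphi(C))=c_{\mathbb{H}}(\psi(C))=c_{\mathbb{H}}(C).
\]

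To handle the general convex $\varphi(C)$, I would approximate it in Hausdorff distance by members of $\widehat{\mathscr{C}}$ that are still symplectic images of sets close to $C$. A natural choice is
\[
K_\epsilon:=\varphi(C)+\epsilon B,
\]
where $B$ is the closed unit Hilbert ball: the Minkowski sum of a bounded closed convex set with a strongly convex smooth set lies in $\widehat{\mathscr{C}}$, and $K_\epsilon\to\varphi(C)$ in Hausdorff distance as $\epsilon\to 0$. The key construction is then a smooth symplectomorphism $g_\epsilon:\varphi(C)\to K_\epsilon$ whose differentials and those of $g_\epsilon^{-1}$ are bounded up to order three. I would set this up by a Moser argument applied to the interpolating family $C_t:=\varphi(C)+t\epsilon B$, $t\in[0,1]$: solve a suitable equation for a time-dependent Hamiltonian whose flow maps $C_0$ onto $C_t$, and set $g_\epsilon$ equal to its time-one map. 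Then the composition $\varphi_\epsilon:=g_\epsilon\circ\varphi:C\to K_\epsilon$ is a smooth symplectomorphism with the required bounds on derivatives, so the strongly convex case already treated gives $c_{\mathbb{H}}(K_\epsilon)=c_{\mathbb{H}}(C)$. Letting $\epsilon\to 0$ and invoking the Hausdorff continuity (v) of $c_{\mathbb{H}}$ then yields $c_{\mathbb{H}}(\varphi(C))=c_{\mathbb{H}}(C)$.

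The main obstacle is this last construction. Building $g_\epsilon$ amounts to producing a Hamiltonian flow on a neighborhood of $\varphi(C)$ whose time-one map sends $\varphi(C)$ precisely onto $K_\epsilon$; one must interpolate between a merely convex hypersurface $\partial \varphi(C)$ and the $C^{1,1}$-smooth, strongly convex $\partial K_\epsilon$ and then extend the resulting normal flow to a globally defined Hamiltonian vector field with third derivatives bounded uniformly on a neighborhood. In finite dimensions the standard Moser argument handles this without difficulty; in our infinite-dimensional setting the uniform third-order estimates, needed so that Theorem \ref{invariance} ultimately applies via Theorem \ref{possohom}, are where the technical work has to be done.
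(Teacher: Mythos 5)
Your first step --- reducing to a homogeneous symplectomorphism $\psi$ with $\psi(C)=\varphi(C)$ via Theorem \ref{possohom}, and observing that the only missing hypothesis of Theorem \ref{invariance} is that $\psi(C)$ may fail to be strongly convex --- is exactly the paper's strategy. The gap is in your approximation step. You propose to take $K_\epsilon:=\varphi(C)+\epsilon B$ and to build, by a Moser argument, a symplectomorphism $g_\epsilon:\varphi(C)\to K_\epsilon$ with bounded differentials. No such map can exist: already in finite dimensions a symplectomorphism preserves volume, while $\mathrm{vol}(\varphi(C)+\epsilon B)>\mathrm{vol}(\varphi(C))$; more intrinsically, $K_\epsilon$ contains a dilate $(1+\delta)\varphi(C)$ with $\delta>0$, so by monotonicity and homogeneity its capacity is strictly larger than that of $\varphi(C)$, which is incompatible with the invariance you would be invoking. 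Moser's method produces a diffeomorphism matching two volume (or symplectic) forms only when the relevant total integrals agree; it cannot carry a domain onto a strictly larger one symplectically. So what you flag as ``where the technical work has to be done'' is in fact an obstruction, not a technicality, and the argument does not close.

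The paper avoids constructing any new symplectomorphism. It regularizes at the level of gauge functions: setting $H_{K_\epsilon}:=H_{\psi(C)}+\tfrac{\epsilon}{2}\|\cdot\|^2$ and $K_\epsilon:=\{H_{K_\epsilon}\le \tfrac12\}$ produces sets $K_\epsilon\subset\psi(C)$ belonging to $\widehat{\mathscr{C}}$ and converging to $\psi(C)$ in the Hausdorff metric. The same homogeneous $\psi$ already in hand tautologically maps $\psi^{-1}(K_\epsilon)$ onto $K_\epsilon$, and since $H_{\psi^{-1}(K_\epsilon)}=H_C+\tfrac{\epsilon}{2}\|\psi\|^2$, the strong convexity of $H_C$ together with the boundedness of $d^2\psi$ away from the origin shows $\psi^{-1}(K_\epsilon)\in\widehat{\mathscr{C}}$ for $\epsilon$ small. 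Theorem \ref{invariance} then gives $c_{\mathbb{H}}(K_\epsilon)=c_{\mathbb{H}}(\psi^{-1}(K_\epsilon))$, and Hausdorff continuity concludes. If you want to salvage your outline, replace the Minkowski-sum approximation and the construction of $g_\epsilon$ by this pullback of regularized sublevels under the fixed $\psi$.
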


\begin{proof} 
By Theorem \ref{possohom} we can find a homogeneous symplectomorphism $\psi: \mathbb{H} \rightarrow \mathbb{H}$ which is smooth on $\mathbb{H}\setminus \{0\}$ and such that $\psi(C)=\varphi(C)$. Moreover, the second differentials of $\psi$ and $\psi^{-1}$ are bounded on the complement of every neighborhood of the origin.

We cannot apply directly Theorem \ref{invariance} to $\psi$ and $C$ because the set $\psi(C)$ might not be in $\widehat{\mathscr{C}}$: Its boundary is certainly smooth, but it might not be strongly convex. We shall overcome this difficulty by an approximation argument.

Since the differential of $H_C$ is Lipschitz-continuous and so is the 0-homogeneous map $d\psi^{-1}$ on the complement of each neighborhood of the origin, the 2-homogeneous function
\[
H_{\psi(C)} = H_C \circ \psi^{-1}
\]
has a Lipschitz-continuous differential. Then the same is true for the 2-homogeneous function
\[
x \mapsto H_{\psi(C)}(x) + \frac{\epsilon}{2} \|x\|^2
\]
which moreover is $\epsilon$-strongly convex, for every $\epsilon>0$. Therefore, the Fenchel conjugate of the above function has a Lipschitz-continuous differential (see e.g.\ \cite[Theorem 18.15]{bc11}) and hence the set
\[
K_{\epsilon} := \set{x\in \mathbb{H}}{ H_{K_{\epsilon}}(x) \leq \frac{1}{2} }, \qquad \mbox{where } 
H_{K_{\epsilon}}(x) := H_{\psi(C)}(x) + \frac{\epsilon}{2} \|x\|^2 \qquad \forall x\in \mathbb{H},
\]
belongs to $\widehat{\mathscr{C}}$. The sets $K_{\epsilon}$ are subsets of $\psi(C)$ and converge to $\psi(C)$ in the Hausdorff metric for $\epsilon\rightarrow 0$. Since $\psi^{-1}$ is Lipschitz-continuous, $\psi^{-1}(K_{\epsilon})$ converges to $C$  in the Hausdorff metric for $\epsilon \rightarrow 0$. Moreover,
\[
H_{\psi^{-1}(K_{\epsilon})} (x) = H_{K_{\epsilon}} \circ \psi (x) = H_{\psi(C)} \circ \psi (x) + \frac{\epsilon}{2} \|\psi(x)\|^2 = H_C(x) + \frac{\epsilon}{2} \|\psi(x)\|^2.
\]
Since $H_C$ is strongly convex and $d^2 \psi$ is bounded on the complement of every neighborhood of the origin, the above function is strongly convex if $\epsilon$ is small enough. Indeed, since this function is 2-homogeneous and smooth on $\mathbb{H}\setminus \{0\}$, it is enough to check that there exists $a>0$ such that
\[
d^2 H_{\psi^{-1}(K_{\epsilon})} (x)[u,u] \geq a  \|u\|^2 \qquad \forall x\in \partial B, \; \forall u\in \mathbb{H},
\]
when $\epsilon$ is small enough. This follows directly from the fact that $H_C$ is smooth and strongly convex, and hence
\[
d^2 H_{C} (x)[u,u] \geq b  \|u\|^2 \qquad \forall x\in \partial B,\; \forall u\in \mathbb{H},
\]
for some $b>0$, and from the bound
\[
d^2 \left( \frac{1}{2} \|\psi\|^2 \right)[u,u] = \|d\psi(x)[u]\|^2 + (\psi(x),d^2 \psi(x)[u,u]) \geq -  c \|u\|^2, \qquad u\in \mathbb{H},
\]
where
\[
c := \sup_{x\in \partial B} \|\psi(x)\| \|d^2 \psi(x)\|.
\]
We conclude that, when $\epsilon$ is small enough, both $\psi^{-1}(K_{\epsilon})$ and $K_{\epsilon}$ belong to $\widehat{\mathscr{C}}$, and Theorem \ref{invariance} implies that
\[
c_{\mathbb{H}}(K_{\epsilon}) = c_{\mathbb{H}}(\psi^{-1}(K_{\epsilon})).
\]
By the continuity of $c_{\mathbb{H}}$ with respect to the Hausdorff metric we find
\[
c_{\mathbb{H}}(\psi(C)) = c_{\mathbb{H}}(C),
\]
and the this follows from the identity $\psi(C)=\varphi(C)$.
\end{proof}

\begin{rem}
Using the continuity of the symplectic capacity $c_{\mathbb{H}}$ with respect to the Hausdorff metric, it should also be possible to  prove the invariance of the symplectic capacity $c_{\mathbb{H}}$ for arbitrary convex sets $C$ and $\varphi(C)$ in $\mathscr{C}$. Indeed, the set $\widehat{\mathscr{C}}$ is dense in $\mathscr{C}$ with respect to the Hausdorff metric: If $C\in \mathscr{C}$, then an approximating set $C_{\epsilon}$ in $\widehat{\mathscr{C}}$ can be defined by setting
\[
H_{C_{\epsilon}} = \left( H_C^* + \frac{\epsilon}{2} \|\cdot\|_*^2 \right)^* + \frac{\epsilon}{2} \|\cdot\|^2,
\]
where $\|\cdot\|_*$ denotes the dual norm on $\mathbb{H}^*$. However in general $\varphi(C_{\epsilon})$ will not be convex even for $\epsilon$ small, and further approximation arguments are needed to complete the proof of the invariance. 
\end{rem}

The existence of a symplectic capacity which satisfies the conditions of Theorems \ref{main} and \ref{invariance2} allows us to prove Theorems \ref{uno} and \ref{due} from the Introduction. 

\begin{proof}[Proof of Theorems \ref{uno} and \ref{due}.]
Since $d^3\varphi$ is bounded and $B_r$ is convex, the convex set $\varphi(B_r)$ is also bounded. Up to composing $\varphi$ with a translation, we may assume that $\varphi(B_r)$ is a neighborhood of $0$. Therefore $\varphi(B_r)$ belongs to $\mathscr{C}$. Since $B_r$ belongs to $\widehat{\mathscr{C}}$ and has smooth boundary, Theorem \ref{invariance2} implies that
\[
c_{\mathbb{H}}(\varphi(B_r)) = c_{\mathbb{H}}(B_r).
\]
Let $P$ be the symplectic projector onto a symplectic closed linear subspace $\mathbb{H}_0$. By the above identity, together with Theorem \ref{main} (ii), (iii) and (iv), we have
\begin{equation}
\label{ingen}
c_{\mathbb{H}_0} (P \varphi(B_r)) \geq c_{\mathbb{H}}(\varphi(B_r)) = c_{\mathbb{H}}(B_r)= r^2  c_{\mathbb{H}}(B) = \pi r^2.
\end{equation}
When $\dim \mathbb{H}_0=2$ the left-hand side of (\ref{ingen}) coincides with with the $\omega$-area of $P\varphi(B_r)$, as we have seen in (\ref{dim2}), and the conclusion of Theorem \ref{uno} follows:
\[
\mathrm{area}_{\omega} (P \varphi(B_r)) \geq  \pi r^2.
\]
When $\dim \mathbb{H}_0=2k$ the left-hand side of (\ref{ingen}) has the bound
\begin{equation}
\label{vite}
c_{\mathbb{H}_0} (P \varphi(B_r))^k \leq \gamma \, \mathrm{vol}_{\omega^k} (P \varphi(B_r)),
\end{equation}
where $\gamma$ is a positive constant which does not depend on $k$. This capacity-volume estimate has been proved by Artstein-Avidan, Milman and Ostrover in \cite{amo08}. From (\ref{ingen}) and (\ref{vite}) we conclude that
\[
\mathrm{vol}_{\omega^k} (P \varphi(B_r)) \geq \gamma^{-1} c_{\mathbb{H}_0} (P \varphi(B_r))^k \geq \gamma^{-1} \pi^k r^{2k}, 
\]
which proves Theorem \ref{due}.
\end{proof}

\section{Proof of Theorem \ref{minicara}}
\label{last}

The aim of this last section is to prove Theorem \ref{minicara} which, as already noticed, follows from standard arguments from Clark duality (see \cite{cla79, cla81, ce80,eke90}).

Let $x: \T \rightarrow \partial C$ be a closed characteristic on $\partial C$. Then $x$ coincides, up to an orientation preserving time reparametrization, with a $T$-periodic solution $y: \R/T\Z \rightarrow \partial C$ of the Hamiltonian equation
\[
-\Omega \dot{y} = dH_C(y),
\]
where $H_C = \mu_C^2/2$ and $\mu_C$ is the Minkowski gauge of $C$. Then $H_C(y)=1/2$ and the symplectic action of $x$ is
\[
\mathbb{A}(x) = \int_{\R/T\Z} y^*(\lambda) = - \frac{1}{2} \int_{\R/T\Z} \langle \Omega \dot{y}, y \rangle\, dt = \frac{1}{2} \int_{\R/T\Z} \langle dH_C(y),y \rangle \, dt =  \int_{\R/T\Z} H_C(y)\, dt = \frac{T}{2},
\]
where we have used the Euler identity for the 2-homogeneous function $H_C$. The continuously differentiable loop
\[
\xi: \T \rightarrow \mathbb{H}^*, \qquad \xi(t) := - \frac{1}{T} \Omega y(Tt),
\]
satisfies
\[
H_{C^0}(\dot{\xi}) = H_{C^0}(- \Omega \dot{y}) =  H_{C^0} (dH_C(y)) = H_C(y) = \frac{1}{2},
\]
where we have used the identity (\ref{leg2}). Therefore, $\dot{\xi}(t)$ belongs to $C^0$ for every $t\in \T$. It follows that
\[
c_{\mathbb{H}}(C) = \frac{1}{4 a_{\infty}(C)} \leq \frac{1}{4 \A^*(\xi)} = \frac{T^2}{4 \A(y)} = \frac{T^2}{4 \A(x)} = \A(x).
\]
This proves the first assertion of Theorem \ref{minicara}.

We now prove the second statement for $p=2$.
Since the function
\[
p\mapsto \|\mu_{C^0}(\dot{\xi})\|_p
\]
is increasing on $[2,+\infty]$, the case $p\in [2,+\infty]$ follows. The case $p\in [1,2)$ requires some concepts from non-smooth analysis as in \cite{cla81} or \cite[Chapter II]{eke90} and will not be presented here, since we do not use this result in this paper. 

Let $\xi:\T \rightarrow \mathbb{H}^*$ be an absolutely continuous curve which maximizes $\mathbb{A}^*$ among all curves $\eta:\T \rightarrow \mathbb{H}^*$
such that $\|\mu_{C^0}(\dot{\eta})\|_2\leq 1$. We must show that $-\Omega^{-1} \xi$ is homothetic to a closed characteristic on $\partial C$ of action $c_{\mathbb{H}}(C)$. Since $\xi$ is a maximizer of $\mathbb{A}^*$ and $\mathbb{A}^*(\xi)>0$, we actually have $\|\mu_{C^0}(\dot{\xi})\|_2=1$, otherwise $\theta \xi$ would still satisfy the constrain for some $\theta>1$, and 
by
\[
\mathbb{A}^*(\theta \xi)=\theta^2 \mathbb{A}^*(\xi) > \A^*(\xi) 
\]
$\xi$ would not be a maximizer. We deduce that $\xi$ maximizes the smooth functional $\mathbb{A}^*$ under the constraint $\Phi_C(\xi)=1/4$, where
\[
\Phi_C(\xi) = \frac{1}{2} \int_{\T} H_{C^0}(\dot{\xi})\, dt = \frac{1}{4} \|\mu_{C^0}(\dot{\xi})\|_2^2.
\]
Since $\Phi_C$ is continuously differentiable on $H^1(\T,\mathbb{H}^*)$, the theorem of Lagrange multipliers implies that
\begin{equation}
\label{criti}
d\mathbb{A}^*(\xi) = \lambda \, d\Phi_C(\xi)
\end{equation}
for some $\lambda\in \R$. By the Euler identity
\[
2 a_2(C) = 2\mathbb{A}^*(\xi) = d\mathbb{A}^*(\xi)[\xi] =  \lambda \, d\Phi_C(\xi)[\xi] = 2 \lambda \, \Phi_C(\xi) = \frac{\lambda}{2},
\]
and hence $\lambda= 4 a_2(C)$. Using the formulas (\ref{difPhi}) and (\ref{difA}), (\ref{criti}) can be rewritten as
\[
\int_{\T} \langle \dot{\eta} ,\Omega^{-1} \xi + 2  a_2(C)  dH_{C^0}(\dot{\xi})\rangle \, dt = 0 \qquad \forall \eta\in H^1(\T,\mathbb{H}^*).
\]
By the Du Bois-Reymond Lemma, there is a constant loop $\bar{y}$ in $\mathbb{H}$ such that
\[
-\Omega^{-1}\xi + \bar{y} = 2 a_2(C) dH_{C^0}(\dot{\xi})  \quad \mbox{a.e. on } \T.
\]
By applying $dH_C$ to both sides we find by (\ref{leg1})
\[
dH_C(-\Omega^{-1}\xi + \bar{y}) = 2 a_2(C) dH_C \circ dH_{C^0} (\dot{\xi}) = 2 a_2(C) \dot{\xi}.
\]
Therefore, the loop $y:=-\Omega^{-1} \xi +\bar{y}:\T \rightarrow \mathbb{H}$ is a 1-periodic solution of the Hamiltonian system
\begin{equation}
\label{hamsi}
-\Omega \dot{y} = \frac{1}{2 a_2(C)} dH_C(y).
\end{equation}
In particular, the function $H_C(y)$ has a constant value $E>0$. It follows that the curve
$x := y/\sqrt{2E}$ satisfies 
\[
H_C(x) = \frac{1}{2E} H_C(y) = \frac{1}{2},
\]
and hence is the required closed characteristic on $\partial C$ homothetic to $y-\bar{y}=-\Omega^{-1} \xi$. The value of its action is by (\ref{hamsi})
\[
\begin{split}
\A(x) &= \frac{1}{2E} \A(y) = -\frac{1}{4E} \int_{\T} \langle \Omega \dot{y},y\rangle \, dt = \frac{1}{8 E a_2(C)}  \int_{\T} \langle dH_C(y),y\rangle \, dt \\ &= \frac{1}{4 E a_2(C)} \int_{\T} H_C(y)\, dt = \frac{1}{4 E a_2(C)} E = \frac{1}{4 a_2(C)} = c_{\mathbb{H}}(C),
\end{split}
\]
as claimed. The proof in the case of a loop $\xi$ which minimizes $\|\mu_{C^0}(\dot{\xi})\|_2$ on the set of absolutely continuous loops with action $\A^*(\xi)=1$ is completely analogous.

We now assume that $\mathbb{H}$ is finite dimensional. The set of $\xi\in H^1(\T,\mathbb{H}^*)$ such that $\|\mu_{C^0}(\dot{\xi})\|_2\leq 1$ projects to a weakly compact subset of the quotient $H^1(\T,\mathbb{H}^*)/\mathbb{H}^*$, where $\mathbb{H}^*$ denotes the subspace of constant loops. The function $\A^*$ is invariant with respect to translations by constants and is weakly continuous in $H^1$, since it is continuous on $H^{1/2}(\T,\mathbb{H}^*)$, which embeds compactly in $H^1(\T,\mathbb{H}^*)$, because $\mathbb{H}^*$ is finite dimensional. Therefore, the supremum which defines $a_2(C)$ is a maximum. Let $\xi$ be a maximizer. As we have seen above, a suitable translated copy $y$ of $-\Omega^{-1}\xi$ satisfies (\ref{hamsi}), from which, applying $H_{C^0}$ and using (\ref{leg2}), we find
\[
H_{C^0}(\dot{\xi})= H_{C^0}(-\Omega \dot{y}) = \frac{1}{4 a_2(C)^2} H_C(y).
\]
Since $H_C(y)$ is constant, so is $H_{C^0}(\dot{\xi})$. It follows that 
\[
\|\mu_{C^0}(\dot{\xi})\|_p = \|\mu_{C^0}(\dot{\xi})\|_2 \qquad \forall p\in [1,+\infty],
\]
so $\xi$ is a maximizer also for the problem which defines $a_p(C)$. The existence of a minimizer for the problem (\ref{infi}) is completely analogous. This concludes the proof of Theorem \ref{minicara}.

\renewcommand{\thesection}{\Alph{section}}
\renewcommand{\theequation}{a.\arabic{equation}}
\setcounter{section}{0}
\setcounter{equation}{0}

\section{Appendix: linear non-squeezing and attractors}

Let $(\mathbb{H},\omega)$ be a symplectic Hilbert space, and let $B$ and $J$ be the unit ball and the complex structure which are determined by a compatible inner product $(\cdot,\cdot)$ on $\mathbb{H}$. Linear symplectomorphisms on $\mathbb{H}$ satisfy the following generalized version of the non-squeezing theorem:

\begin{thm}
\label{linnonsqueez}
Let  $P$ be the symplectic projector onto a $2k$-dimensional symplectic subspace $\mathbb{H}_0$ of $\mathbb{H}$. Then for every linear symplectomorphism $\Phi: \mathbb{H} \rightarrow \mathbb{H}$ there holds
\begin{equation}
\label{app1}
\mathrm{vol}_{\omega^k}(P \Phi(B)) \geq \pi^k.
\end{equation}
The equality holds if and only if the subspace $\Phi^{-1} \mathbb{H}_0$ is $J$-invariant.
\end{thm}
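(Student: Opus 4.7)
My plan is to reduce the statement to a direct linear-algebraic computation of $\mathrm{vol}_{\omega^k}(P\Phi(B))$ via the symplectic projector onto $\mathbb{H}_1:= \Phi^{-1}(\mathbb{H}_0)$. Since $\Phi$ is symplectic, $\mathbb{H}_1$ is a $2k$-dimensional symplectic subspace, $\Phi|_{\mathbb{H}_1}:\mathbb{H}_1\to\mathbb{H}_0$ is a linear symplectic isomorphism, and $\Phi$ sends $\mathbb{H}_1^{\perp_{\omega}}$ onto $\mathbb{H}_0^{\perp_{\omega}} = \ker P$. Let $p_\omega: \mathbb{H}\to \mathbb{H}_1$ be the symplectic projector along $\mathbb{H}_1^{\perp_{\omega}}$. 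These two facts give $P\Phi = \Phi|_{\mathbb{H}_1}\circ p_\omega$, and since $\Phi|_{\mathbb{H}_1}$ preserves the symplectic volume form $\omega^k$, the theorem reduces to showing
\[
\mathrm{vol}_{\omega^k}\bigl(p_\omega(B)\bigr) \geq \pi^k,
\]
with equality exactly when $\mathbb{H}_1$ is $J$-invariant.

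The compatible inner product gives $\mathbb{H}_1^{\perp_{\omega}} = J(\mathbb{H}_1^\perp) = (J\mathbb{H}_1)^\perp$, so $\ker p_\omega = (J\mathbb{H}_1)^\perp$. Writing any $x\in \mathbb{H}$ as $x_1+x_2$ with $x_1\in J\mathbb{H}_1$ and $x_2\in (J\mathbb{H}_1)^\perp$, one checks both that $p_\omega(x)=p_\omega(x_1)$ and that $p_{J\mathbb{H}_1}(x_1)=p_{J\mathbb{H}_1}(x)$; this yields the factorization $p_\omega = R^{-1}\circ p_{J\mathbb{H}_1}$, where $R:= p_{J\mathbb{H}_1}|_{\mathbb{H}_1}:\mathbb{H}_1\to J\mathbb{H}_1$ is the bijective restriction of orthogonal projection. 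Hence $p_\omega(B)=R^{-1}(B_{J\mathbb{H}_1})$, an ellipsoid in $\mathbb{H}_1$. Pick an orthonormal basis $\{h_i\}$ of $\mathbb{H}_1$; then $\{Jh_i\}$ is orthonormal in $J\mathbb{H}_1$, and a short calculation identifies the matrix of $R$ in these bases with the skew-symmetric Gram matrix $\Omega_{ij}:=\omega(h_i,h_j)$ of $\omega|_{\mathbb{H}_1}$. Combining the formula for the volume of the ellipsoid $R^{-1}(B_{J\mathbb{H}_1})$ with the standard identity $\omega^k = k!\,\mathrm{Pf}(\Omega)\,dh^1\wedge\cdots\wedge dh^{2k}$ gives
\[
\mathrm{vol}_{\omega^k}\bigl(p_\omega(B)\bigr) \;=\; \frac{\pi^k}{|\mathrm{Pf}(\Omega)|}.
\]

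It therefore suffices to prove $|\mathrm{Pf}(\Omega)|\leq 1$, with equality exactly when $\mathbb{H}_1$ is $J$-invariant. Viewed as an endomorphism of $\mathbb{H}_1$, the matrix $\Omega$ represents $p^\perp \circ J|_{\mathbb{H}_1}$ (up to transpose), which is the composition of an orthogonal operator with an orthogonal projector; hence every singular value of $\Omega$ lies in $[0,1]$, so $|\det\Omega|=\mathrm{Pf}(\Omega)^2\leq 1$. Equality forces $\Omega$ to be orthogonal, i.e., $\|p^\perp Jv\|=\|Jv\|=\|v\|$ for every $v\in\mathbb{H}_1$; by Pythagoras applied to $Jv = p^\perp Jv + (I-p^\perp)Jv$, this is equivalent to $Jv\in \mathbb{H}_1$ for every such $v$, i.e., to $\mathbb{H}_1$ being $J$-invariant.

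The principal technical point is the factorization $p_\omega = R^{-1}\circ p_{J\mathbb{H}_1}$ and the recognition that the Jacobian appearing in the volume formula is $|\det\Omega| = \mathrm{Pf}(\Omega)^2$ for the Gram matrix of $\omega|_{\mathbb{H}_1}$; once these identifications are in place, the bound $|\mathrm{Pf}(\Omega)|\leq 1$ is a one-line consequence of $\|J\|=\|p^\perp\|=1$ and the equality case reduces to a single Pythagorean identity, delivering both assertions of the theorem at once.
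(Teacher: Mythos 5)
Your proof is correct, but it takes a genuinely different route from the paper's. The paper treats the case where $\mathbb{H}_0$ is $J$-invariant (so that $P$ is an orthogonal projector) by citing Theorem 1 of \cite{am13}, and then reduces the general symplectic projector to that case by passing to a second compatible inner product for which $P$ is orthogonal and conjugating by a complex-linear symplectic isometry $\Psi$. You instead give a self-contained argument: the factorization $P\Phi=\Phi|_{\mathbb{H}_1}\circ p_\omega$ with $\mathbb{H}_1=\Phi^{-1}\mathbb{H}_0$ transfers the problem to the symplectic projection of $B$ onto $\mathbb{H}_1$ (using that $\Phi|_{\mathbb{H}_1}$ intertwines the two restricted forms $\omega^k$), and the identification $p_\omega=R^{-1}\circ p_{J\mathbb{H}_1}$ combined with the Pfaffian identity yields the exact value $\mathrm{vol}_{\omega^k}(P\Phi(B))=\pi^k/|\mathrm{Pf}(\Omega)|$, from which the inequality and the equality case drop out simultaneously via the elementary bound $\|p_{\mathbb{H}_1}\circ J|_{\mathbb{H}_1}\|\leq 1$. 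This is in substance a re-proof of the cited Abbondandolo--Matveyev result, so what your approach buys is independence from that reference together with an explicit formula for the volume of the shadow; the paper's change-of-inner-product trick buys brevity but leaves the core estimate external. Two small points to polish: the notation $p^\perp$ for the orthogonal projector onto $\mathbb{H}_1$ is easy to misread as the projector onto $\mathbb{H}_1^\perp$, and you should state explicitly that $R$ is injective because $\mathbb{H}_1\cap(J\mathbb{H}_1)^\perp=\mathbb{H}_1\cap\mathbb{H}_1^{\perp_\omega}=\{0\}$ --- this is precisely where the hypothesis that $\mathbb{H}_1$ is symplectic enters, and it is also what guarantees $\mathrm{Pf}(\Omega)\neq 0$ so that division by $|\mathrm{Pf}(\Omega)|$ is legitimate.
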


Notice that the above inequality is the one appearing in Theorem \ref{due} (in the case $r=1$, since here we are dealing with linear mappings), but with the sharp constant $\gamma=1$. 

\begin{proof}
First assume that $\mathbb{H}_0$ is $J$-invariant. In this case $P$ is an orthogonal projector, and the inequality (\ref{app1}) is proved in \cite[Theorem 1]{am13} (in the finite-dimensional case, but the proof extends readily to infinite-dimensional Hilbert spaces). There it is also proved that the equality holds if and only if the subspace $\Phi^T \mathbb{H}_0$ is $J$-invariant. Using the fact that $\mathbb{H}_0$ is $J$-invariant and $\Phi$ is symplectic, that is $\Phi^T J \Phi = J$, the latter condition is easily seen to be equivalent to the fact that $\Phi^{-1}\mathbb{H}_0$ is $J$-invariant:
\[
J \Phi^T \mathbb{H}_0 = \Phi^T \mathbb{H}_0 \quad \iff \quad J \Phi^T J \mathbb{H}_0 = \Phi^T J \mathbb{H}_0 \quad \iff \quad \Phi^{-1} \mathbb{H}_0 = J \Phi^{-1} \mathbb{H}_0.
\]
Now we show how the general case can be deduced from the above one. Let $(\cdot,\cdot)'$ be an $\omega$-compatible inner product on $\mathbb{H}$ for which $P$ is an orthogonal projector, and let $B'$ and $J'$ be the corresponding unit ball and complex structure. Let $\Psi: (\mathbb{H},\omega,J') \rightarrow (\mathbb{H},\omega,J)$ be a symplectic and
complex linear isomorphism. It follows that $\Psi$ is an isometry from $(\mathbb{H},(\cdot,\cdot)')$ to $(\mathbb{H},(\cdot,\cdot))$, and hence $\Psi(B')=B$. If we apply the previous case to the symplectic isomorphism $\Phi\Psi$, we obtain
\[
\mathrm{vol}_{\omega^k}(P \Phi(B)) = \mathrm{vol}_{\omega^k}(P \Phi\Psi(B')) \geq \pi^k,
\]
with the equality holding if and only if the subspace $(\Phi\Psi)^{-1} \mathbb{H}_0$ is $J'$-invariant. Using the identity $J' \Psi^{-1} = \Psi^{-1} J$, which follows by inverting $\Psi J' = J \Psi$, we can check that the latter condition is equivalent to the fact that $\Phi^{-1} \mathbb{H}_0$ is $J$-invariant:
\[
\begin{split}
J' (\Phi\Psi)^{-1} \mathbb{H}_0 = (\Phi\Psi)^{-1} \mathbb{H}_0 \quad \iff \quad J' \Psi^{-1}\Phi^{-1} \mathbb{H}_0 = \Psi^{-1}\Phi^{-1} \mathbb{H}_0 \\ \iff \quad \Psi^{-1} J \Phi^{-1} \mathbb{H}_0 = \Psi^{-1}\Phi^{-1} \mathbb{H}_0 \quad \iff \quad J \Phi^{-1} \mathbb{H}_0 = \Phi^{-1} \mathbb{H}_0.
\end{split}
\]
This concludes the proof.
\end{proof}

The above linear result has the following non-linear consequence, which implies that a 1-parameter family of symplectomorphisms cannot have a compact invariant set which is a ``uniform attractor'':

\begin{cor}
Let $\varphi: A \rightarrow A'$ be a $C^1$ symplectomorphism between open subsets of $\mathbb{H}$. Then there cannot exist a compact subset $K\subset A$ with the property
\begin{equation}
\label{attractor}
\varphi(K + rB) \subset K + \theta r B \qquad \forall r\in [0,r_0],
\end{equation}
where $r_0>0$ and $\theta<1$.
\end{cor}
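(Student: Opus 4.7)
The plan is to derive a contradiction with the linear non-squeezing Theorem \ref{linnonsqueez}, applied to the differential $d\varphi(x)$ at an arbitrary point $x\in K$, by exploiting the infinite-dimensionality of $\mathbb{H}$ to select a symplectic $2$-plane onto which $K$ projects with arbitrarily small diameter.

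Fix any $x\in K$ and set $\Phi:=d\varphi(x)$, a linear symplectic isomorphism of $\mathbb{H}$. For every $\eta>0$, the differentiability of $\varphi$ at $x$ furnishes $\rho_0>0$ such that $\|\varphi(x+\rho u)-\varphi(x)-\rho\Phi u\|\le \eta\rho$ for all $u\in B$ and $\rho\in(0,\rho_0]$. Rewriting this inclusion and using $B=-B$ yields
\[
\rho\Phi(B)\subset \varphi(B_\rho(x))-\varphi(x)+\eta\rho B,
\]
while the hypothesis gives $\varphi(B_\rho(x))\subset K+\theta\rho B$ as soon as $\rho\le \min\{\rho_0,r_0\}$. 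Combining the two,
\[
\rho\Phi(B)\subset K-\varphi(x)+(\theta+\eta)\rho B. \qquad (*)
\]

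Now compactness of $K$ lets us choose, for any prescribed $\epsilon>0$, a finite-dimensional subspace $V\subset \mathbb{H}$ with $K\subset V+\epsilon B$. Since $V+JV$ is finite-dimensional and $\mathbb{H}$ is not, we may pick a unit vector $e\in \mathbb{H}$ orthogonal to $V+JV$ and set $\mathbb{H}_0:=\mathrm{span}\{e,Je\}$. This plane is $J$-invariant, and $\omega(e,Je)=(Je,Je)=1$ shows that it is symplectic. The $J$-invariance of $\mathbb{H}_0$ gives $\mathbb{H}_0^{\perp_{\omega}}=J\mathbb{H}_0^{\perp}=\mathbb{H}_0^{\perp}$, so the symplectic projector $P$ onto $\mathbb{H}_0$ coincides with the Hilbert orthogonal projector; in particular $\|P\|=1$ and $P|_V=0$, so $P(K)\subset \epsilon B_{\mathbb{H}_0}$. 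Applying $P$ to $(*)$ confines $\rho P\Phi(B)$ to a Euclidean disk in $\mathbb{H}_0$ of radius $\epsilon+(\theta+\eta)\rho$, whence
\[
\mathrm{area}_{\omega}\bigl(\rho P\Phi(B)\bigr)\le \pi\bigl(\epsilon+(\theta+\eta)\rho\bigr)^{2}.
\]
On the other hand, Theorem \ref{linnonsqueez} applied to $\Phi$ and $P$ (with $k=1$) gives $\mathrm{area}_{\omega}(P\Phi(B))\ge \pi$, hence $\mathrm{area}_{\omega}(\rho P\Phi(B))\ge \pi\rho^{2}$. Combining these bounds yields
\[
(1-\theta-\eta)\rho\le \epsilon.
\]
Choosing $\eta:=(1-\theta)/2>0$, fixing any $\rho\in(0,\min\{\rho_0,r_0\}]$, and then picking $\epsilon<\rho(1-\theta)/4$ produces the contradiction.

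The genuinely infinite-dimensional step is the construction of $\mathbb{H}_0$: compactness of $K$ lets us trap it in an $\epsilon$-neighborhood of a finite-dimensional subspace $V$, and the infinite codimension of $V+JV$ lets us find a $J$-invariant symplectic $2$-plane almost perpendicular to $K$. This is the only delicate point; everything else is a direct combination of the linear non-squeezing bound with a first-order Taylor expansion of $\varphi$ at $x$.
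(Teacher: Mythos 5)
Your proof is correct, but it takes a genuinely different route from the paper's. Both arguments begin the same way: replace $\varphi$ near a point of $K$ by its differential via a first-order Taylor expansion and feed the hypothesis (\ref{attractor}) into the resulting inclusion, so that the image of a small ball under the linear map $d\varphi(x)$ is trapped in a thin neighbourhood of $K$. They diverge in how linear non-squeezing is then brought to bear. The paper covers $K$ by $N$ balls of radius $\epsilon r$, projects onto an \emph{arbitrary} $2k$-dimensional $J$-invariant subspace, and compares volumes to obtain $1\leq N(1-\epsilon)^{2k}$; the contradiction comes from letting $k\rightarrow\infty$, so the full $2k$-dimensional statement of Theorem \ref{linnonsqueez} is needed. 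You instead fix $k=1$ and choose the $J$-invariant $2$-plane \emph{adapted to} $K$: compactness traps $K$ in an $\epsilon$-neighbourhood of a finite-dimensional subspace $V$, and a unit vector $e$ orthogonal to $V+JV$ spans with $Je$ a symplectic plane onto which $K$ projects into a disk of radius $\epsilon$ (the point being that $e\perp V$ and $e\perp JV$ together give $Je\perp V$, so $P|_V=0$). Your version buys economy --- only the two-dimensional, i.e.\ classical, case of the linear non-squeezing theorem is used, and no counting of covering balls or passage to high dimension is required --- at the price of a more carefully chosen projection plane. One small omission: you assert that $\mathbb{H}$ is infinite-dimensional without justification, and your construction of $e$ genuinely fails when $V+JV=\mathbb{H}$; since the statement does not assume $\dim\mathbb{H}=\infty$, you should dispose of the finite-dimensional case separately, as the paper does in one line via conservation of volume, before picking $e$ orthogonal to $V+JV$.
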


\begin{proof}
When $\dim \mathbb{H}<\infty$, the claim follows from the conservation of volume. Therefore, we may assume that $\mathbb{H}$ is infinite-dimensional.
Assume by contradiction that $\varphi$ satisfies (\ref{attractor}).
Set $\epsilon:=(1-\theta)/3$ and fix some $x_0\in K$. Since $\varphi$ is continuously differentiable, there exists a positive function $r\mapsto \delta(r)$ infinitesimal for $r\rightarrow 0$ such that
\[
\varphi(x_0) + r d\varphi(x_0) (B) \subset \varphi(x_0+rB) + \delta(r) r B.
\]
Fix $r>0$ so that $\delta(r)<\epsilon$ and obtain, using also (\ref{attractor}),
\begin{equation}
\label{app2}
\varphi(x_0) + r d\varphi(x_0) (B) \subset \varphi(x_0+rB) + \epsilon r B \subset K + \left( \theta + \epsilon \right) r B = K + \left( 1-2\epsilon \right) r B.
\end{equation}
Being compact, $K$ can be covered by finitely many balls of radius $\epsilon r$: There exist points $x_1,\dots,x_N\in \mathbb{H}$ such that
\[
K \subset \bigcup_{j=1}^N ( x_j + \epsilon r B).
\]
Together with (\ref{app2}) this implies
\begin{equation}
\label{app3}
\varphi(x_0) + r d\varphi(x_0) (B) \subset \bigcup_{j=1}^N \bigl( x_j + (1-\epsilon)r B \bigr).
\end{equation}
Let $P$ be the orthogonal projector onto a $2k$-dimensional $J$-invariant subspace $\mathbb{H}_0$. By (\ref{app3}) we obtain
\begin{equation}
\label{app4}
\begin{split}
\mathrm{vol}_{\omega^k} \bigl( P \varphi(x_0) + r P d\varphi(x_0) (B) \bigr) &\leq \mathrm{vol}_{\omega^k} \Bigl( \bigcup_{j=1}^N \bigl( P x_j + (1-\epsilon)r (B\cap \mathbb{H}_0) \bigr) \Bigr) \\ &\leq N \mathrm{vol}_{\omega^k} \bigl( (1-\epsilon)r (B\cap \mathbb{H}_0) \bigr) = N \pi^k (1-\epsilon)^{2k} r^{2k}.
\end{split}
\end{equation}
On the other hand, by Theorem \ref{linnonsqueez} we have
\begin{equation}
\label{app5}
\mathrm{vol}_{\omega^k} \bigl( P \varphi(x_0) + r P d\varphi(x_0) B \bigr) = r^{2k} \mathrm{vol}_{\omega^k} \bigl( P d\varphi(x_0) B \bigr) \geq \pi^k r^{2k}.
\end{equation}
By (\ref{app4}) and (\ref{app5}) we obtain
\[
1 \leq N (1-\epsilon)^{2k}.
\]
Since the right-hand side is infinitesimal for $k\rightarrow \infty$, we find a contradiction which proves our assertion.
\end{proof}


\newcommand{\etalchar}[1]{$^{#1}$}
\providecommand{\bysame}{\leavevmode\hbox to3em{\hrulefill}\thinspace}
\providecommand{\MR}{\relax\ifhmode\unskip\space\fi MR }
\providecommand{\MRhref}[2]{%
 \href{http://www.ams.org/mathscinet-getitem?mr=#1}{#2}
}
\providecommand{\href}[2]{#2}

\end{document}